\def\mafo{\mathrm}
\newcommand{\Ld}{\!\;\calL^d}
\newcommand{\TRAGRO}{transport-growth}
\newcommand{\COLORW}{\begin{mdframed}[backgroundcolor=magenta!20,%
                   linewidth=0mm, outermargin=-10mm, innermargin=-5mm]\footnotesize}
\newcommand{\COLORWend}{\end{mdframed}}
\newcommand{\overbow}[1]{
   \tikz [baseline = (N.base), every node/.style={}] {
      \node [inner sep = 0pt] (N) {$#1$};
      \draw [line width = 0.6pt] plot [smooth, tension=1.2] coordinates {
         ($(N.north west) + (0.5ex,0.2ex)$)
         ($(N.north)      + (-0.03,0.6ex)$)
         ($(N.north east) + (-0.5ex,0.3ex)$)};
      \filldraw ($(N.north east) + (-0.4ex,0.4ex)$) --  ($(N.north east)
        + (-0.2ex,0.1ex)$) -- ($(N.north east) + (-0.6ex,0.15ex)$) -- cycle;}}
\newcommand{\ul}[1]{\underline{#1}}
\newcommand{\EEE}{\color{black}}
\numberwithin{equation}{section}
\definecolor{refkey}{gray}{.8}   
\definecolor{labelkey}{rgb}{0.9,0.2,0.2} %
\renewcommand*\env@cases[1][1.2]{%
  \let\@ifnextchar\new@ifnextchar
  \left\lbrace
  \def\arraystretch{#1}%
  \array{@{}c@{\quad}l@{}}%
}
\newcommand{\GeoCov}[1]{{#1}^{\Geod}}
\newcommand{\YX}{X}
\newcommand{\LET}{\mathsf{L\!E\!T}}
\renewcommand{\bfH}{H}
\renewcommand{\d}{\mathrm{d}}
\renewcommand{\bfLambda}{\bm{\Lambda}}
\newcommand{\mathbcal}[1]{{\bm#1}}
\renewcommand{\dd}{\mathbcal{d}}
\newcommand{\xx}{\mathbcal{x}}
\newcommand{\zz}{\mathbcal{z}}
\newcommand{\dom}{{\mathrm{dom}}}
\newcommand{\ddd}{\,\mathrm{d}}
\newcommand{\mm}{\boldsymbol{\upmu}}
\newcommand{\lala}{\boldsymbol{\uplambda}}
\def\mfC{\mathfrak C}
\newcommand{\sfd}{\mathsf d}  
\newcommand{\sfF}{\mathsf F}  
\newcommand{\sfG}{\mathsf G}  
\newcommand{\dist}{\operatorname{dist}}
\newcommand{\supp}{\operatorname{supp}}
\newcommand{\A}{A}
\newcommand{\B}{B}
\newcommand{\M}{\mathcal{M}}
\newcommand{\E}{\mathsf{E}}
\newcommand{\Er}{\Delta}
\newcommand{\ob}{x_{\mathsf{ob}}}  
\newcommand\geo[1]{\!\;\overbow{{#1}}\;\!}
\newcommand{\invd}{\mathbb{\Delta}}
\newcommand{\Geod}{\operatorname {Geod}}
\newcommand{\sfW}{\mathrm W}
\newcommand{\HK}{\mathsf{H \hspace{-0.27em} K}}
\newcommand{\SHK}{\mathsf{S\hspace{-0.18em} H\hspace{-0.27em} K}}
\newcommand{\He}{\mathsf{He}}
\newcommand{\SHe}{\mathsf{S\hspace{-0.18em}He}}
\newcommand{\last}{\lambda^{\hspace{-2pt}\ast}}
\newtheorem{mainresult}{Main Result}
\newtheorem{assumption}{Assumption}
\newtheorem{openquestion}[theorem]{Open Question}
\begin{document}

\title{Evolutionary Variational Inequalities on\\  the 
       Hellinger-Kantorovich and \\
       Spherical Hellinger-Kantorovich spaces}

\author{%
Vaios Laschos\thanks{WIAS Berlin.\ Partially supported by DFG  under Germany's Excellence Strategy – The Berlin Mathematics
Research Center MATH+ (EXC-2046/1, project ID: 390685689). } 
  \and 
Alexander Mielke\thanks{WIAS Berlin and Institut f\"ur Mathematik, Humboldt
  Unversit\"at zu Berlin. Partially supported by DFG under Priority Program
  \emph{Variational Methods for Predicting Complex Phenomena in Engineering
    Structures and Materials} (SPP-2256, project ID: 441470105, subproject
  Mi 459/9-1) }   
}

\date{22 December 2023}

\maketitle


{\footnotesize\def\contentsname{}\vspace{-4em} \tableofcontents}

\vspace{2cm}
\begin{abstract}
  We study the minimizing movement scheme for families of geodesically
  semiconvex functionals defined on either the Hellinger-Kantorovich space or
  on the Spherical Hellinger-Kantorovich space. By exploiting some of the
  finer geometric properties of the spaces (namely the local-angle condition
  and the semiconcavity of the squared distance, when
  restricted to suitable subsets), we prove that the sequence of curves,
  which are produced by geodesically interpolating the points generated by the
  minimizing movement scheme, converges to a curve that satisfy the Evolutionary
  Variational Inequality (EVI), when the time step goes to $0$. Under suitable
  conditions, we obtain a global EVI flow on the whole space.
\end{abstract}

Keywords: Minimizing movement scheme, Evolutionary Variational Inequality, \\ 
Hellinger-Kantorovich space, Spherical Hellinger-Kantorovich space, \\
density estimates, local-angle condition, semiconcavity of the squared distance.

\section{Introduction}

Let $\YX$ be a geodesic metric space and $\calM(\YX)$ the space of all
nonnegative and finite Borel measures on $\YX.$ Independently, in
\cite{CPSV15IDOT}, \cite{KMV16b}, and 
\cite{LiMiSa16OTCR, LiMiSa18OETP}, the space
$(\calM(\YX),\HK)$ was introduced and studied, where $\HK$ denotes the
Hellinger-Kantorovich or Wasserstein-Fisher-Rao distance. In
\cite{LiMiSa16OTCR,LiMiSa18OETP}, it was proved that $(\calM(\YX),\HK)$ is a
geodesic space itself and all geodesic curves were characterized.  In
\cite{LasMie19GPCA}, the spherical Hellinger Kantorovich distance $\SHK$ was
introduced and it was proved that the set of all probability measures
$ \calP(\YX) = \bigset{\mu\in \calM(\YX)}{ \mu(\YX)=1}$ endowed with $\SHK$ is
also a geodesic metric space.

For the rest of the paper, we assume that $\YX\subset\mathbb{R}^{d}$ is a 
compact, convex set with nonempty interior, which allows us to exploit the
characterization of geodesic semiconvexity for $\HK$ provided recently in
\cite{LiMiSa23FPGG}. We 
introduce a family of entropy functionals, i.e.
\begin{equation}
\label{EntropyFunctional}
\E(\mu)=\int_{\YX}E(\rho(x))\calL^d (\d x) + E'_{\infty}\rmd\mu^{s},
\hspace{16pt} \mu =\rho\calL^d +\mu^{s}\hspace{8pt}\text{and}\hspace{8pt}
\mu^{s}\bot\calL^{d}, 
\end{equation}
where
$E'_{\infty}=\lim_{t\rightarrow \infty}\frac{E(t)}{t}=\lim_{t\rightarrow
  \infty}E'(t),$ and $\mu^{s}$ the singular part of $\mu$ with respect to
$\calL^d,$ i.e. the Lebesgue measure restricted at $\YX.$ In this paper we are
going to study De Giorgi's \emph{minimizing movement (MM) scheme}, also known
as JKO scheme (after \cite{JoKiOt98VFFP}) in the case of the Wasserstein space,
\begin{equation}  
 \label{scheme}
 \mu_{1}=\inf_{\mu \in\calM(\YX)}\left\{\frac{\HK^{2}(\mu_{0},\mu)}{2\tau}+\E(\mu)\right\}
\qquad 
\mu_{1} =\inf_{\mu \in \calP(\YX) } \left\{\frac{\SHK^{2}(\mu_{0},\mu)}{2\tau}+\E(\mu)\right\},
\end{equation}
for the Hellinger-Kantorovich and Spherical Hellinger-Kantorovich space
respectively. We are going to limit our exploration to cases where the
functionals $\E$ are of the form \eqref{EntropyFunctional}, and satisfy the
following basic convexity assumptions.  \renewcommand\theassumption{A}
\begin{assumption}
 \label{BasicAssumpt}
\leavevmode\vspace{-0.5em}
\begin{enumerate}\itemsep-0.3em
\item $E:\mathbb{R}^{+}\rightarrow\mathbb{R}$ is a convex function. 
\item $\E$ is geodesically $\lambda$-convex for some $\lambda \in \mathbb{R}$.
\end{enumerate}
\end{assumption}
We note that by \cite[Theorem 5.2]{Ambro}, the functional $\E$ is lower
semicontinuous, and it is the relaxation of itself when defined only on
$(\calM_\text{ac}(X),\HK)$ or on $(\calP_\text{ac}(X),\SHK)$. 

A metric space
along with a lower semicontinuous functional define a metric gradient
system,  cf.\ e.g.\ \cite{Miel23IAGS}: 

\begin{definition}
Let $(\calX,\sfd_{\calX})$ be a metric space and $\phi:\calX \to (-\infty,\infty]$ a
lsc functional, then $(\calX,\phi,\sfd_{\calX})$ is called a \emph{metric gradient
  system}.
\end{definition}

The main goal is to show that geodesic interpolation of points that are
generated by the MM scheme give rise to sequences of curves with good limiting
properties. More specifically, we show that such sequences of curves converge,
when $\tau$ converges to $0$, to curves that satisfy the Evolutionary
Variational Inequalities (EVI) for the metric gradient system
$(\calM(\YX),\E,\HK)$ or $(\calP(\YX),\E,\SHK)$, respectively.

Before we proceed, we briefly remind the reader of the definition of EVI. It
involves the upper right Dini derivative 
\[
\frac{\rmd^+}{\rmd t } \, \zeta(t) := \limsup_{h\to 0^+} \frac1h\left(
\zeta(t{+}h) - \zeta(t)\right). 
\]

\begin{definition}[EVI solutions for metric gradient systems]
\label{EVI-diff}
Let  $(\calX,\phi,\sfd_{\calX})$ be a \emph{metric gradient system}.  For $T\in (0,\infty)$ and $\lambda\in\R$ we say that a continuous
curve $\xx:[0,T) \to \calX$ is an \emph{EVI$_\lambda$ solution} for the metric
gradient system $(\calX,\phi,\sfd_{\calX})$, if $\phi(\xx(t))<\infty$ for all
$t\in (0,T)$ and for every ``observer'' $\ob \in\calX,$ we have
\begin{equation}
\label{eq:EVI.diff}
  \frac{\rmd^{+}}{\rmd t}\frac{1}{2}\sfd_{\calX}^{2}(\xx(t),\ob )
   + \frac{\lambda}{2}\sfd_{\calX}^{2}(\xx(t),\ob )
 \leq \phi(\ob )-\phi(\xx(t)) \quad \text{for all }t\in (0,T).
\end{equation} 
If furthermore $T=\infty,$ we call $\xx$ a \emph{complete EVI$_\lambda$
  solution.}
\end{definition}

EVIs are used to provide a generalization of the definition of gradient flows
in the more abstract setting of geodesic metric spaces, see
\cite{AmGiSa05GFMS}. For a nice exposition on EVIs the reader is advised to
follow the trilogy of papers \cite{MurSav20GFEV, MurSavII+III}. Our main
result (cf.\ Theorem \ref{mainmain}) relies on some of these results and reads as
follows: 

\renewcommand\themainresult{}
\begin{mainresult}\label{Main Result}
  Let $X \subset \R^d$ be a compact, convex set with nonempty interior.
  Furthermore, let $\E$ be of the form \eqref{EntropyFunctional} and let it
  satisfy Assumption \ref{BasicAssumpt}. \smallskip
  
  Then, for all $\mu_0=\rho_0\calL^d$ with
  $0<\ul\rho_0 \leq \rho_0(x) \leq \ol\rho_0< \infty$ a.e.\ in $\YX$, the
  geodesically interpolated solutions of the MM schemes corresponding to the
  gradient system $(\calM(X),\E,\HK)$ (or $(\calP(X),\E,\SHK)$), as in
  $\eqref{scheme}_{\HK}$ (or $\eqref{scheme}_{\SHK}$), converge to a complete
  solution $\bm{\mu}:[0,\infty)\to \calP(\YX)$ of EVI$_\lambda$.  Moreover, for
  all $\mu_0 \in \ol{\dom(\E)}^{\HK} \subset \calM(X)$ (or
  $\mu_{0}\in \ol{\dom(\E)}^{\SHK}\subset \calP(X)$) there exists a unique EVI
  solution, emanating from $\mu_{0}$. \smallskip
\end{mainresult} 

We emphasize that the generation of EVI solutions presented in
\cite[Thm.\,4.0.4]{AmGiSa05GFMS} is not applicable in our case, because it
strongly relies on the uniform generalized convexity assumption (cf.\
\cite[Ass.\,4.0.1]{AmGiSa05GFMS}) for the functional
$v\mapsto \frac1{2\tau} \sfd(w,v)^2 + \E(v)$ for $\sfd =\HK$ or $\SHK$,
respectively. So far, the existence of suitable ``generalizations of geodesic
curves'' as used in the Kantorovich-Wasserstein setting is not known, and there
are no ideas how to approach this question in the case of unbalanced transport.

Thus, we follows the completely different route that is devised in
\cite{Sava07GFDS,Sava11?GFDS,MurSav20GFEV,MurSavII+III}. This methods relies on
the two independent assumptions of (i) semiconvexity of the functional and (ii)
additional properties of the geodesic space $(\calX,\sfd)$, namely the
Local-Angle Condition (LAC) and the semiconcavity of the squared distance,
i.e.\ $\frac12\sfd^2(x_*,\cdot)$. As these two fundamental properties are not
so well-known in the theory of gradient systems, we discuss a few examples in
Section \ref{su:Exa.LAC.k-concave}. The semiconcavity for $\HK$ and $\SHK$ is
only true when working on subsets with suitably upper and lower density bounds,
see \cite{LasMie19GPCA}. Our work adapts the results in \cite{MurSavII+III} by
suitably accounting for the interplay of localization and semiconcavity with
varying parameter $\kappa$, where we can use the absolutely non-trivial result
from \cite{MurSavII+III} (see \cite[Thm.\,7]{Sava07GFDS} for the first
announcement) that the resulting EVI$_\lambda$ is independent of the
semiconcavity parameter $\kappa$.

A family of functionals $\E$ satisfying Assumption \ref{BasicAssumpt} on the
Hellinger-Kantorovich space $(\calM(X),\E)$ is the following.

\begin{example}[The case $(\calM(X),\E,\HK)$]
\label{example}
Consider $\E_{\alpha,m}^{\gamma}$ generated by
$E_{\alpha,m}^{\gamma}(c)=\alpha c^m+\gamma c$ with $ \alpha>0,\ m>1$, and
$\gamma\in\mathbb{R}$. Then, according to \cite[Sec.\,7]{LiMiSa23FPGG} we know
that $\E$ is geodesically $\lambda$-convex on $(\calM(X),\HK)$ with 
$\lambda =2\gamma$. 
\end{example}

Another example of functionals satisfying Assumption \ref{BasicAssumpt}, on the Hellinger-Kantorovich space but also on the Spherical
Hellinger Kantorovich space follows.

\begin{example}[Both cases $(\calM(X),\E,\HK)$ and $(\calP(X),\E,\SHK)$]
\label{example2} 
Let $E(c)=-c^{q}$ and assume that either $d=1$ and $q \in [1/3, 1/2]$ or
$d=2$ and $q=1/2$, then \cite[Sec.\,7]{LiMiSa23FPGG} ensures that $\E$ is
geodesically $0$-convex on  $(\calM(X),\HK)$, and our Proposition \ref{pr:q.leq.1}
gives the same for $(\calP(X),\SHK)$. 
\end{example}

\begin{openquestion}\label{OQ.1}
  In \cite{LiMiSa23FPGG} the functionals $\E$ of the form
  \eqref{EntropyFunctional} that are semiconvex in $(\calM(X),\HK)$ 
  were fully characterized. However, for the case of the Spherical
  Hellinger-Kantorovich space, only very few semiconvexity results are known,
  and these results are corollaries of general theorems that connect metric
  spaces with their spherical counterparts, see Proposition \ref{pr:q.leq.1}. A
  general characterization of geodesically semiconvex functionals on the
  Spherical Hellinger-Kantorovich space is still elusive. We leave this as an
  open question, and we welcome any suggestion for collaboration in this
  direction.
\end{openquestion}

Unlike with other definitions of gradient flows, the EVI approach guarantees
some useful properties. One of the most important is the asymptotic stability
for sequences of curves that satisfy EVI (see
\cite[Sec.\,2.6]{DanSav14LNGF} or \cite[Thm.\,5.12]{Miel23IAGS}). More
specifically, under weak convergence assumptions for a sequence of functionals
$\sfG_{k},$ to some $\sfG_{\infty},$ we get for free that the sequence of
solutions to the respective EVI converge to a solution of EVI with respect to
the limit functional. One can easily show that the limit case EVI for
$\sfG_{m}=\E_{\alpha,m}^{\gamma}$ for $m \to \infty,$ where
$\E_{\alpha,m}^{\gamma}$ as in Example \ref{example}, corresponds to the
functional
\begin{equation}\label{tumor}
\E^{\gamma}_{\infty}(\mu)=\begin{cases}\gamma\mu(\YX) 
 &\text{for } \mu=\rho\calL \hspace{8pt} \text{with}\hspace{8pt} 
      \rho(x)\leq 1 \text{ a.e.},\\
\infty & \text{otherwise}, 
\end{cases}
\end{equation}
which was studied independently in \cite{GLM19} and \cite{DimChi20TGMH}.

One of the primary motivations for studying the gradient of entropy functionals
on spaces of measures is their frequent identification as solutions to
well-known partial differential equations (PDEs). This connection was initially
demonstrated for the Fokker-Planck and diffusion equations, which were proven
to be gradient flows of the differential entropy on the Wasserstein space. For
further details and results, please refer to \cite{JoKiOt98VFFP, Otto01GDEE,
  LiMiSa16OTCR}.  Further applications in \cite{PeQuVa14HSAM,
  DimChi20TGMH} involve reaction-diffusion equations where the reaction term
exactly correspond to the Hellinger part in $\HK$. \medskip

Our paper is divided into  six sections. In Section \ref{se:EVI.vs.PDE} we
discuss the differences between our EVI approach and other works that provide
weak solutions to the corresponding partial differential equations.  Gradient
flows for $\HK$ were already studied in \cite{KMV16a, KMV16b, GalMon17JKOS, GLM19,
  DimChi20TGMH, Flei20?MMAC} and in \cite{KV19} for $\SHK$. Our approach is
more restrictive as it is based on geodesic semiconvexity, but yields
uniqueness of solutions and Lipschitz continuous dependence on the initial
data, while uniqueness may fail for weak PDE solutions. 

In Section \ref{se:HK}, we present
various equivalent definitions of $\HK$ and discuss relevant lemmas. In Section
\ref{se:density bounds}, we prove that if $\mu_0$ possesses desirable density
bounds, then the minimizers $\mu_1$ in both JKO schemes \eqref{scheme} also
exhibit favorable density bounds. Notably, for the $\SHK$ space, we establish a
discrete maximal principle.

Section \ref{se:GenSavare's method} first introduces the general theory
around the Local-Angle Condition (LAC) and the semiconcavity of the squared
distance together with some examples in Section
\ref{su:Exa.LAC.k-concave}. Then, an abstract existence theorem is
developed for EVI (Evolution Variational Inequality) solutions, the proof of
which is based on LAC and $\kappa$-concavity. 
\begin{GSreplace}
Here, we use a series of results from \cite{MurSavII+III} which are cited in
full detail but without proof, except for our abstract existence result
in Theorem \ref{th:MainExistTheo} that relies on the density of $\cup
A_\kappa$, where $A_\kappa$ are suitable subsets of $\calX$ in which the
squared distance is $\kappa$-concave. 
\end{GSreplace} 
\begin{GSproof}
\COLORW
Here we provide a full proof of our abstract existence result
in Theorem \ref{th:MainExistTheo} that relies heavily on the theory developed
in \cite{Sava11?GFDS, MurSavII+III}. 
\COLORWend
\end{GSproof}
Our abstract existence result extends the approach provided in
\cite{MurSav20GFEV, MurSavII+III} of proving EVI solutions for $\lambda$-convex
functionals to situations where $\kappa$-concavity of the squared distance is
not true globally. We only need that $\kappa$-concavity holds only in
suitable subsets $A_\kappa$ instead of the whole space. We reckon that this
localization approach is an
interesting extension on its own. 

In Section \ref{se:SemicEVI} we then show that the necessary $\kappa$-concavity
for $(\calM(X),\HK)$ and $(\calP(X),\SHK)$ can be obtained from the 
theory developed in \cite{LasMie19GPCA}. Combining this with the theory of
geodesic convexity of \cite{LiMiSa23FPGG} then allows us to establish our
main existence results for EVI solutions in the spaces $(\calM(X),\HK)$ and
$(\calP(X),\SHK)$.

\section{Evolutionary variational inequalities versus PDEs}
\label{se:EVI.vs.PDE}

Evolutionary variational inequalities are defined for metric gradient systems
$(\calX,\E,\sfd)$, where $(\calX,\sfd)$ is a complete geodesic space and $\E:
\calX\to \R\cup\{\infty\}$ a
geodesically $\lambda$-convex functional. A curve $u:{[0,\infty[}\to \calX$ is
called an EVI$_\lambda$ if it satisfies 
\begin{equation}
  \label{eq:EVI1}
  \begin{aligned}
&\forall \,s,t\geq 0 \text{ with }s<t\ \forall\, w \in\mathrm{dom}(\E):
\\
&\frac12 \sfd^2(u(t),w) \leq \frac12 \ee^{-\lambda(t-s)} \sfd^2(u(s),w) +
M_\lambda(t{-}s) \big( \E(w)- \E(u(t)\big),
\end{aligned}
\end{equation}
where $M_\lambda(r)= \int_0^r \ee^{-\lambda s} \,\d s$. See Proposition \ref{pr:EVI-int}
and \cite{MurSav20GFEV} for several equivalent formulations. 

To have specific example in mind we consider $(\calX,\sfd)= (\calM(X),\HK)$ and
the functional $\E$ from \eqref{EntropyFunctional} with
\begin{equation}
  \label{eq:E.special}
  E(c) = - \sqrt{c} + \frac13\, c^{3/2} \quad \text{for } c\geq 0.
\end{equation}
We restrict to space dimensions $d=1$ or $2$ and observe that $\E$ is
geodesically convex (cf.\ Section \ref{su:HK.EVI.flow} or \cite{LiMiSa23FPGG}.
Hence, our main existence results in Theorem \ref{mainmain} applies and we have
a unique EVI solution for all $\mu_0 \in \calM(X)$. Here, we use that
$\mathrm{dom}(\E)= \rmL^{3/2}(X)$ is dense in $(\calM(X),\HK)$.

We claim that the unique solution starting in $\mu_0=0$ is given by the curve 
\[
\wt\mu(t) =\rho(t) \Ld(\d x) \quad \text{ with }\rho(t)=\tanh^2(t). 
\]
It is difficult to check
that this $\mu$ satisfies \eqref{eq:EVI1}. However, \cite[Thms.\,3.5,
cf.\,(3.17)]{MurSav20GFEV} proves that all EVI solutions $\mu:{[0,\infty[}\to
\calM(X)$ are curves of maximal
slope , i.e.\ 
\begin{equation}
  \label{eq:CurvMaxSlope}
  \frac\d{\d t} \E(\mu(t)) = - \frac12\big|\dot{\mu}\big|^2_\HK(t) -
  \frac12 \big| \pl \E\big|^2_\HK(\mu(t)) \quad \text{for }t>0.
\end{equation}
More importantly. \cite[Thms.\,4.2]{MurSav20GFEV} states the reverse:  
if the EVI flow exists on $\ol{\dom(\E)}$ then every curve of maximal slope
is an EVI solution. Thus, $\wt\mu$ the unique EVI solution starting at
$\mu_0=0$, if it is a curve of maximal slope for $\calM(X),\E,\HK)$.  

To check this, we first observe that
$[0,S_*] \ni s \mapsto s^2 \mu_*$ is a $\HK$ geodesics for all $\mu_*\in
\calM(X)$ one finds $\HK(r_0^2\mu_*,r_1^2\mu_*)
=|r_0{-}r_1|\mu_*(X)^{1/2}$. Applying this to $\wt \mu$ we find $
\HK(\wt\mu(t_1),\wt\mu(t_2)) = \big|\tanh(t_1)- \tanh(t_2)\big|\,\Ld(X)^{1/2}$
and conclude that the metric speed satisfies 
\[
|\dot{\wt\mu}|^2_\HK(t)=
(\tanh'(t))^2\Ld(X)=\big(1{-}\tanh^2(t)\big)^2 \Ld(X).
\] 

Next we observe that $\E(\wt\mu(t)) = \Ld(X) (\frac13 \tanh^3(t)-\tanh t)$
such that 
\[
\frac{\d}{\d t} \E(\wt\mu(t)) = - \tanh'(t) \big( 1{-} \tanh^2(t)\big)=
-\big(1-\tanh^2(t)\big)^2. 
\]
Finally, from $\xi= \rmD \E(\wt\mu(t))$ and
$|\pl\E|^2_\HK = \int_X \big( \rho|\nabla \xi|^2+ 4\rho \xi^2\big) \d x $, we
obtain, with $\xi = E'(\rho) =\frac12 (\rho(t){-}1)/(2\sqrt{\rho(t)})$, the
metric slope
\[
\big| \pl \E\big|^2_\HK(\wt\mu) =4\rho \Big( \frac{\rho{-}1}{2\sqrt\rho}
\Big)^2 = (\rho(t){-}1)^2=\big(1{-}\tanh^2(t)\big)^2.  
\]
Thus, $\wt\mu$ is shown to be a curve of maximal slope, which is unique because
of the uniqueness of EVI solutions. 

Of course, the EVI formulation can be compared to corresponding weak
formulations of the associated partial differential equation occurring to as
gradient-flow equations. Generalizing the above energy to a sum of an internal
energy $\E$ and a potential energy $\mu \mapsto -\int_X V \mu(\d x)$ we consider
the gradient systems $(\calM(X),\mathsf F, \HK_{\alpha})$ and 
$\calP(X),\mathsf F,\SHK_{\alpha,\beta})$ with $\mathsf
F(\mu)=\E(\mu)-\int_X V\mu(\d x)$. Here, $\HK_{\alpha,\beta}$ and
$\SHK_{\alpha,\beta}$ are scaled versions of $\HK=\HK_{1,4} $ and
$\SHK = \SHK_{1,4}$, respectively, that are defined via the Onsager
operators
\begin{align*}
\bbK^\HK_{\alpha,\beta}(\mu) \,\xi 
&= - \DIV\left( \alpha \rho \nabla \xi\right) + \beta \rho \xi,
\\
\bbK^\SHK_{\alpha,\beta}(\mu) \,\xi 
&= - \DIV\left( \alpha \rho \nabla \xi\right) + \beta  \rho \left( \xi -\ts
\int_X \rho\xi \d x\right).
\end{align*} 
Assuming further that $ \Ld $ is the $d$-dimensional Lebesgue measure
(restricted to $X$) such that $\E(\mu)= \int_X E(\rho)\d x$ we have the derivative $\rmD
\E(\rho\d x)= E'(\rho)$ and the Otto calculus (see \cite{Otto01GDEE,
  AmGiSa05GFMS, LieMie13GSGC, LiMiSa16OTCR}) leads to the gradient-flow equation
\begin{align*}
\dot \rho&= - \bbK^\HK_{\alpha,\beta}(\mu)\,\rmD \mathsf F(\mu)= \alpha 
\DIV\!\big( \rho\, \nabla (E'(\rho){-}V) \big)  - \beta \rho (E'(\rho){-}V),
\\
\dot \rho&= - \bbK^\SHK_{\alpha,\beta}(\mu)\,\rmD \mathsf F(\mu)= \alpha 
\DIV\!\big( \rho \,\nabla(E'(\rho){-}V)\big) - \beta \rho
\left(E'(\rho){-}V 
- \ts \int_X \rho (E'(\rho){-}V) \d x \right) .
\end{align*} 

The existence of weak solutions to the above reaction-diffusion equations
was established for various cases already. The gradient structure in
$(\calM(X),\HK)$ was first exploited in \cite{KMV16a,KMV16b}, for models in
which existence of solutions follows from classical PDE techniques.  In
\cite{GalMon17JKOS} the convergence of a time-splitting scheme is shown, were
minimizing movements steps are alternating between the Kantorovich-Wasserstein
distance and the Hellinger-Fisher-Rao distance. In \cite{Flei20?MMAC} the
static characterization of the $\HK$ distance from \cite{LiMiSa16OTCR,
  LiMiSa18OETP} is employed for showing that the curves generated by the JKO
scheme converge to solutions of the above reaction-diffusion PDE.

The gradient-flow equations for $(\calP(X),\mathsf F,\SHK)$ were studied in
\cite{KV19}, where existence is established via approximation by classical
solutions and suitable a priori estimates. Moreover, convergence into steady
states as well as functional inequalities are obtained.\medskip

The special gradient system $(\calM(X),\E,\HK)$ with $E$ from \eqref{eq:E.special}, $
V\equiv 0$, and $(\alpha,\beta)=(1,4)$ leads to the above EVI from the
beginning of this section. Recall that this EVI has a unique
solution starting at $\mu=0$. The associated PDE for this case reads 
\begin{equation}
  \label{eq:PDE.special}
  \dot\rho= \Delta \big( \frac12\sqrt\rho + \frac16 \,\rho^{3/2}\big) + 2
  \,\big( \sqrt\rho - \rho^{3/2}\big) \text{ in } X, \quad \nabla\rho\cdot\nu =
  0 \text{ on }\pl X,
\end{equation}
which is indeed a parabolic equation (with fast diffusion). However, this
equation is non-Lipschitz at $\rho=0$ and admits a one-parameter family of
spatially constant solutions, namely for $\zeta \geq 0$
\[
\rho^{(\zeta)}(t)=\left\{ \ba{cl}\tanh^2(t{-}\zeta)& \text{for } t\geq \zeta,
  \\
0 &\text{for }t\in [0,\zeta].  \ea \right. 
\]
Clearly, the solutions are not curves of maximal slope because they are not EVI
solutions (cf.\ \cite[Thms.\,3.5+4.2]{MurSav20GFEV}). This, we see a clear
difference between the set of EVI solutions and PDE solutions. It is expected,
but still unproved, that the generalized minimizing movements obtained in
\cite{Flei20?MMAC} coincide with the EVI solutions.

\section{The metric spaces $(\calM(X),\HK)$ and $(\calP(X),\SHK)$}
\label{se:HK}

\subsection{Notation and preliminaries}
\label{subsec:notation}

We will denote by $\calM(\YX)$ the space of all nonnegative and finite Borel
measures on $\YX$ endowed with the weak topology induced by the duality with
the continuous and bounded functions of $\rmC_b(\YX)$. The subset of measures
with finite quadratic moment will be denoted by $\calM_2(\YX)$. The spaces
$\calP(\YX)$ and $\calP_2(\YX)$ are the corresponding subsets of probability
measures. If $\mu\in \calM(\YX)$ and $\bfT:\YX\to \YX$ is a Borel map, $\bfT_\sharp \mu$
will denote the push-forward measure on $\calM(\YX)$, defined by
\begin{equation}
  \label{eq:push_forw}
\bfT_\sharp\mu(B):=\mu(\bfT^{-1}(B))\quad
\text{for every Borel set $B\subset \YX$}.
\end{equation}
We will often denote elements of $\YX\times\YX$ by $(x_{0},x_1)$ and the
canonical projections by $\pi^i:(x_{0},x_1)\to x_i$ for $i=0,1$.  A transport
plan on $\YX$ is a measure $M_{01}\in \calM(\YX{\times} \YX)$ with marginals
$\mu_i:=\pi^i_\sharp M_{01}$.

\subsection{The logarithmic-entropy transport formulation}
\label{su:LogEntrF}

Here we first provide the definition of the $\HK(\mu_0,\mu_1)$ distance in
terms of a minimization problem that balances a specific transport problem of
measures $\sigma_0\mu_0$ and $\sigma_1\mu_1$ with the relative entropies of
$\sigma_j\mu_j$ with respect to $\mu_j$. For the characterization of the
Hellinger--Kantorovich distance via the static Logarithmic-Entropy Transport
(LET) formulation, we define the logarithmic entropy density
$F:\left[0,\infty\right[\to[0,\infty[$ via $F(r) =r \log r - r+1 $ and the cost
function $\ell:\left[0,\infty\right[\to[0,\infty]$ via
$\ell(R) = -2 \log\left(\cos\left(R\right)\right)$ for $R<\frac{\pi}{2}$ and
$\ell\equiv+\infty$ otherwise. For given measures $\mu_0,\mu_1$ the LET
functional $\LET(\,t\,;\mu_0,\mu_1): \calM(X\times X)\to[0,\infty[$ reads
\begin{equation}\label{def:let}
\LET(\bfH_{01};\mu_0,\mu_1):=
\int_X F(\sigma_0)\d\mu_0+ \int_X F(\sigma_1)\d\mu_1+
\iint_{\YX\times \YX}  \ell(\sfd_X(x,x_1)) \d\bfH_{01} 
\end{equation}
with $\eta_i:=  (\pi_i)_\sharp
\bfH_{01}=\sigma_i\mu_i\ll\mu_i$.  With this, the equivalent
formulation of the Hellinger--Kantorovich distance as
entropy-transport problem reads as follows.

\begin{theorem}[{LET formulation, \cite[Sec.\,5]{LiMiSa18OETP}}]
\label{thm:LET}
For all $\mu_0,\mu_1\in \calM(\YX)$ we have
\begin{equation}\label{eq:HK.ET}
  \HK^2(\mu_0,\mu_1)=\min\big\{ 
  \LET(\bfH_{01};\mu_0,\mu_1)\,\big|\,
  \bfH_{01}\in\calM(\YX {\times} \YX),\  (\pi_i)_\sharp
  \bfH_{01}  \ll\mu_i\big\}.
\end{equation}
\end{theorem}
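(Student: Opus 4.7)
The approach I would take reduces the LET problem to an optimal transport problem on the metric cone $\mfC := \bigl(X{\times}[0,\infty)\bigr)/(X{\times}\{0\})$ equipped with the squared cone distance $\sfd_{\mfC}((x,r),(y,s))^2 = r^2 + s^2 - 2rs \cos\bigl(\sfd_X(x,y)\wedge\pi\bigr)$, following the general philosophy of \cite{LiMiSa18OETP}. My starting point would be the cone representation
\[
  \HK^2(\mu_0,\mu_1) = \inf\bigl\{\sfW^2_{\sfd_{\mfC}}(\alpha_0,\alpha_1) : \mathfrak{h}\alpha_i = \mu_i\bigr\},
\]
where the homogeneous projection $\mathfrak{h}\alpha\in\calM(X)$ is defined by $\int f\,\rmd(\mathfrak{h}\alpha) = \int f(x)\,r^2\,\rmd\alpha(x,r)$. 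This identity is itself a substantial theorem, best proved via the Benamou--Brenier dynamic reformulation of $\HK$; I would import it as a preliminary.

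Granted the cone picture, the LET formulation follows by a change of variables between cone couplings and base-space semicouplings. \emph{Cone-to-LET:} Given $\beta\in\calM(\mfC{\times}\mfC)$ with marginals $\alpha_i$, I push the measure $rs\,\beta$ forward along $((x,r),(x_1,s))\mapsto(x,x_1)$ to obtain a candidate $\bfH_{01}$ on $X\times X$. Disintegrating $\beta$ over its base projection and applying Cauchy--Schwarz on the radial fibres identifies the marginals as $(\pi_i)_\sharp\bfH_{01}=\sigma_i\mu_i$, with $\sigma_i$ given by conditional second moments of the cone radii. A pointwise Legendre-type inequality between $r^2+s^2-2rs\cos\theta$ and the LET integrand $F(\sigma_0)+F(\sigma_1)+\ell(\theta)$, integrated against the disintegration, yields $\sfW^2_{\sfd_{\mfC}}(\alpha_0,\alpha_1) \geq \LET(\bfH_{01};\mu_0,\mu_1)$, and hence $\min\LET \leq \HK^2$.

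\emph{LET-to-cone:} Conversely, from an admissible $\bfH_{01}$ with marginals $\sigma_i\mu_i$, I construct a cone coupling $\beta$ concentrated on the graph $\{r=\sqrt{\sigma_0(x)},\ s=\sqrt{\sigma_1(x_1)}\}$. The marginals of $\beta$ then project onto $\mu_i$ by construction, and the fibrewise inequality from the previous step saturates at these special radii, giving $\sfW^2_{\sfd_{\mfC}}\leq\LET(\bfH_{01};\mu_0,\mu_1)$ and completing the equivalence. Existence of the minimizer in \eqref{eq:HK.ET} follows by standard direct methods: $F$ is convex with superlinear growth, $\ell$ is lower semicontinuous and identically $+\infty$ for arguments beyond $\pi/2$ (confining admissible $\bfH_{01}$ to a bounded region of $X{\times} X$), and the marginal constraint $(\pi_i)_\sharp\bfH_{01}\ll\mu_i$ is stable under weak convergence, so any minimizing sequence is tight and a subsequential limit realizes the infimum.

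The main obstacle, as I see it, lies in identifying and verifying the pointwise Legendre inequality between the cone cost and the LET integrand together with its saturation by the natural choice of radii: the logarithmic form of $F(r)=r\log r - r + 1$ and $\ell(R)=-2\log\cos R$ is not ad hoc but arises as the unique pair solving the associated one-dimensional Legendre optimization on each $(r,s)$-fibre, and pinning this down requires a careful convex analysis. The deeper preliminary step --- the cone identity for $\HK^2$ itself --- would also require nontrivial justification via the Benamou--Brenier dynamic framework, including the subtle issue that the cone cone $\sfW_{\sfd_{\mfC}}$ may not be attained by a probability measure on $\mfC$ (only by measures with appropriate radial moment constraints), if developed from scratch rather than imported as a black box.
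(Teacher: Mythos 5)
The paper does not prove Theorem \ref{thm:LET}: it is imported verbatim from \cite[Sec.\,5]{LiMiSa18OETP}, so there is no internal proof to compare against. Your sketch reconstructs the architecture of the equivalence argument given in that reference (its Sections 6--7), only in the reverse logical order: there the entropy-transport problem is the primary object and the cone formulation is derived from it, whereas you take the cone/Benamou--Brenier picture as primitive and derive LET. Both directions of the equivalence must be established either way, so the orientation is a matter of taste, and your identification of the two genuinely hard ingredients --- the fibrewise Legendre-type inequality between $r_0^2+r_1^2-2r_0r_1\cos\theta$ and $F(\sigma_0)+F(\sigma_1)+\ell(\theta)$, and the cone identity for $\HK^2$ itself --- is accurate.

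That said, as written this is a roadmap rather than a proof, and two of the deferred points deserve flagging because they are where the argument can actually fail if done carelessly. First, the cone distance truncates the angle at $\pi$ while $\ell$ is $+\infty$ beyond $\pi/2$; one must show that cone couplings transporting mass over base distances in $[\pi/2,\pi)$ are never optimal (pure annihilation/creation, i.e.\ coupling with the apex, is cheaper there), otherwise the Cone-to-LET direction produces an $\bfH_{01}$ with infinite cost. Second, your pushforward of $rs\,\beta$ yields marginals whose densities $\sigma_i$ are conditional \emph{first} moments of one radius weighted by the conditional expectation of the partner radius, not ``conditional second moments''; the correct identification of $\sigma_i$ (and the saturation of the fibrewise inequality) requires either the cosine-weighted pushforward or the optimality conditions $\sigma_0(x_0)\sigma_1(x_1)=\cos^2(\sfd_X(x_0,x_1))$ that the paper later quotes in Lemma \ref{uniformitypreference}. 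Neither issue is fatal --- both are resolved in \cite{LiMiSa18OETP} --- but they are precisely the convex-analytic content you have labelled as ``the main obstacle'' and left open.
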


An optimal transport plan $\bfH_{01}$, which always exists, gives the
effective transport of mass. Note, in particular, that only
$\eta_i\ll\mu_i$ is required and the cost of a deviation of $\eta_i$
from $\mu_i$ is given by the entropy functionals associated with
$F$. Moreover, the cost function $\ell$ is finite in the case $
 \sfd_X(x_{0} , x_1)  <\frac{\pi}{2}$, which highlights the
sharp threshold between transport and pure absorption-generation
mentioned earlier.\medskip

Amongst the many characterizations of $\HK$ discussed in
\cite{LiMiSa18OETP} there is one that connects $\HK$ with 
the classical Kantorovich-Wasserstein distance on the cone $\mfC$
over the base space $(X,\sfd_{X})$ with metric 
\begin{equation}
\label{eq:7}
 \sfd^{2}_{\mfC}(z_0,z_1):=
r_0^2+r_1^2-2r_0r_1\cos_{\pi/2}\left(\sfd_{X}(x_{0},x_{1})\right),\quad
z_i=[x_i,r_i],
\end{equation} 
where  as above $\cos_{b}(a)=\cos(\min\{b,a\})$.
Measures in $\M(X)$ can be ``lifted'' to measures in $\M(\mfC)$, 
e.g.\ by considering the measure $\mu\otimes\delta_{1}$ for $\mu\in\M(X)$.
Moreover, we can define the projection of measures in $\M_2(\mfC)$
onto measures in $\M(X)$ via
\[
\mathfrak{P}:\left\{
\begin{array}{ccc}
\M_2(\mfC)&\to&\M(X),\\[0.2em]
\lambda&\mapsto&\int_{r=0}^\infty r^2\,\lambda(\cdot,\rmd r).\
\end{array}\right.
\]
For example, the lift
$\lambda = m_0\delta_{\{\boldsymbol{0}\}} +
\mu\otimes\frac{1}{r(\cdot)^2}\delta_{r(\cdot)}$, with $m_0\geq 0$ and
$r:\mathrm{supp}(\mu)\to\left]0,\infty\right[$ arbitrary, gives
$\mathfrak{P}\lambda= \mu$. Now, the cone space formulation of the
Hellinger--Kantorovich distance of two measures $\mu_0$, $\mu_1\in\M(X)$ is
given as follows.

\begin{theorem}[Optimal transport formulation on the cone]
\label{thm:OTcone}
For $\mu_0,\mu_1\in \M(X)$ we have
\begin{eqnarray*}   
&\HK^2(\mu_0,\mu_1)&= 
  \min\Big\{\sfW_{\sfd_{\mfC}}^{2}(\lambda_0,\lambda_1) \, \Big|\, 
    \lambda_i\in \mathcal{P}_2(\mfC),\ \mathfrak{P}\lambda_i = \mu_i\Big\} \\    
&&=\min\Big\{\iint_{\mfC\times\mfC} \sfd_{\mfC}^{2}(z_0,z_1) 
  \rmd  \Lambda_{01} (z_0,z_1)\,\Big|\, \pi^i_{\sharp}\Lambda_{01}=
 \lambda_i,~\text{and}~\mathfrak{P}\lambda_i=\mu_i\Big\}.
\end{eqnarray*}
\end{theorem}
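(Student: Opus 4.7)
The plan is to establish the identity through the LET formulation of Theorem~\ref{thm:LET}, which is already available. The second equality in the displayed statement is just the definition of the quadratic Wasserstein distance on the cone, so the substance lies in the first: to show
\[
\min\bigl\{\sfW^2_{\sfd_\mfC}(\lambda_0,\lambda_1)\;\bigm|\;\lambda_i\in\mathcal{P}_2(\mfC),\ \mathfrak{P}\lambda_i=\mu_i\bigr\}=\HK^2(\mu_0,\mu_1).
\]
I would prove this by two inequalities, corresponding to an explicit projection from $\mfC\times\mfC$ to $X\times X$ in one direction and an explicit lift of an LET plan in the other. Existence of minimizers on the cone side is a standard tightness argument: minimizing sequences for $\sfW^2_{\sfd_\mfC}$ have uniformly controlled second moments via the constraint $\mathfrak{P}\lambda_i=\mu_i$, which yields tightness in $\mathcal{P}_2(\mfC)$, while $\sfd_\mfC^2$ is lower semicontinuous.

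For the inequality ``cone $\ge$ LET'', I would take any admissible triple $(\lambda_0,\lambda_1,\Lambda_{01})$ on the cone and construct an LET plan $\bfH_{01}$ on $X\times X$ satisfying $\LET(\bfH_{01};\mu_0,\mu_1)\le \iint \sfd_\mfC^2\ddd\Lambda_{01}$. The natural candidate pushes $\Lambda_{01}$ forward to $X\times X$ along the base-point projections $(x_i,r_i)\mapsto x_i$, weighted by $r_0 r_1$, so that its marginals are automatically absolutely continuous with respect to $\mu_i$ thanks to $\mathfrak{P}\lambda_i=\mu_i$. The cost comparison then reduces, after disintegration of $\Lambda_{01}$ along these fibers, to a pointwise Young-type inequality linking $F(r)=r\log r-r+1$ and $\ell(R)=-2\log\cos R$ to the cone cost $r_0^2+r_1^2-2r_0r_1\cos R$. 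Combined with Theorem~\ref{thm:LET}, this yields $\HK^2(\mu_0,\mu_1)\le\iint \sfd_\mfC^2\ddd\Lambda_{01}$.

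Conversely, for the matching lower bound I would lift an optimal LET plan $\bfH_{01}$, furnished by Theorem~\ref{thm:LET} with marginals $\eta_i=\sigma_i\mu_i$, to a cone transport plan. Setting $T_i(x_0,x_1):=(x_i,\sigma_i(x_i)^{-1/2})$ and $\Lambda_{01}:=(T_0,T_1)_\sharp \bfH_{01}$ produces a cone plan whose base-point push-forward is $\bfH_{01}$ and satisfies $\mathfrak{P}\pi^i_\sharp\Lambda_{01}=\sigma_i^{-1}\eta_i=\mu_i$ on the set $\{\sigma_i>0\}$. A further contribution concentrated at the cone vertex $\{r=0\}$ is then added to $\lambda_i$ to absorb the portion of $\mu_i$ where $\sigma_i=0$, i.e.\ the mass that is purely destroyed under the LET plan. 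Because the choice $r_i=\sigma_i^{-1/2}$ is precisely the one that saturates equality in the pointwise inequality of the previous step, the cone cost of $\Lambda_{01}$ equals $\LET(\bfH_{01};\mu_0,\mu_1)=\HK^2(\mu_0,\mu_1)$.

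The main technical obstacle is the bookkeeping of the singular parts $\mu_i-\eta_i$ and the effect of the cut-off at $\sfd_X=\pi/2$, beyond which $\ell\equiv\infty$ so that transport on the cone must be replaced by pure annihilation/creation routed through the vertex $\{r=0\}$. This is precisely where the cone geometry encodes the sharp transport/absorption threshold of $\HK$, and the consistent handling of these degenerate contributions on both sides of the construction is what ultimately drives the equivalence.
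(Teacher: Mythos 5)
The paper itself offers no proof of Theorem \ref{thm:OTcone}: it is recalled from \cite{LiMiSa18OETP}, so there is no in-paper argument to compare against. Your strategy --- deriving the cone formulation from the LET formulation of Theorem \ref{thm:LET} by projecting cone plans down with a weight in one direction, and lifting an optimal LET plan via $r_i=\sigma_i^{-1/2}$ in the other --- is exactly the route of that reference (the ``marginal perspective cost''), and the overall architecture is sound.

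Two steps as written would fail, though both are repairable. First, in the direction ``cone $\geq$ LET'' the projected plan must carry the weight $\theta=r_0r_1\cos_{\pi/2}\big(\sfd_X(x_0,x_1)\big)$, not $r_0r_1$: the pointwise identity underlying the comparison is
\[
\inf_{\theta>0}\Big[\theta\,\ell(R)+r_0^2\,F\big(\theta/r_0^2\big)+r_1^2\,F\big(\theta/r_1^2\big)\Big]
= r_0^2+r_1^2-2r_0r_1\cos_{\pi/2}(R),
\]
attained precisely at $\theta=r_0r_1\cos_{\pi/2}(R)$, whereas the choice $\theta=r_0r_1$ yields $r_0^2+r_1^2-2r_0r_1\big(1+\log\cos R\big)\;\geq\;r_0^2+r_1^2-2r_0r_1\cos R$ (since $\log c\leq c-1$), i.e.\ an inequality in the wrong direction. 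Second, the portion of $\mu_i$ carried by $\{\sigma_i=0\}$ cannot be ``absorbed'' by adding mass to $\lambda_i$ at the vertex: $\mathfrak{P}$ multiplies by $r^2$ and annihilates all vertex mass, so the constraint $\mathfrak{P}\lambda_i=\mu_i$ would be violated. That mass must instead be lifted at a positive radius (e.g.\ as $\mu_i|_{\{\sigma_i=0\}}\otimes\delta_{r=1}$) and coupled in $\Lambda_{01}$ to the vertex on the \emph{opposite} side; this contributes exactly $\mu_i(\{\sigma_i=0\})=\int_{\{\sigma_i=0\}}F(0)\,\d\mu_i$ to the cone cost and so still matches the LET value. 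Vertex mass is then used only to normalize the $\lambda_i$ to probability measures, which is needed anyway since the theorem requires $\lambda_i\in\mathcal{P}_2(\mfC)$.
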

 This result will be needed for proving $K$-semiconcavity in Theorem
\ref{thm:K.Semi}. 

\subsection{Transport-growth systems}\label{s:HK.Dilation-Transportation}

As a slight generalization of \cite[Def.\,2.7]{LiMiSa23FPGG} we define \TRAGRO\
couples which generalizes the transport map for the pure
Kantorovich-Wasserstein case and allows for growth (or decay) of mass as well.

\begin{definition}\label{dcouple}[Transport-growth couple]
A quintuple $(\nu,q_0,\bfT_0,q_1,\bfT_1)$ with
$\nu\in\calM(\calY)$, $\bfT_i:\calY\to \YX$, and $q_i\in\rmL^2(Y;\nu)$
is called \TRAGRO\ couple for $(\mu_0,\mu_1)$, if
\begin{equation}
\label{eq:dilationtransport}
(\bfT_i)_\sharp (q_i^2\nu)=\mu_i.
\end{equation}
is satisfied.  If the \TRAGRO\ couple, has the form
$(\mu_{0},1,\mathbf{I},q,\bfT ),$ then we call $(q,\bfT )$ a \TRAGRO\ couple
from $\mu_{0}$ to $\mu_{1}$. If for the \TRAGRO\ couple we have
\begin{equation}
\begin{split}
\label{eq:95}
\HK^2(\mu_0,\mu_1)=\int_Y 
\left((q_0{-}q_1)^2+4q_0q_1\sin^{2}(|\bfT_0{-}\bfT_1|/2\land\pi/2)\right)\,\d\nu,
\end{split}
\end{equation} 
then we will call that an \emph{optimal \TRAGRO\ couple} from $\mu_{0}$ to
$\mu_{1}$.
\end{definition}

\begin{definition}\label{primes}
For $\mu_{0},\mu_{1}\in\calM(\YX),$ we define the following sets:
\begin{equation}
  \A'_{i}=\{x\in\YX: \dist(x,\supp(\mu_{1-i}<\pi/2))\}, \hspace{32pt} 
\A_{i}'':=\YX\setminus \A_{i}'. 
\end{equation}
We also define the following measure
\begin{equation}
\mu'_{i}(t):=(\mu_{i})_{|_{\A'_{i}}}(t)=\mu_{i}(t\cap \A'_{i}),  \hspace{44pt} \mu''_{i}(t):=(\mu_{i})_{|_{\A''_{i}}}(t)=\mu_{i}(t\cap \A_{i}).
\end{equation}
\end{definition}

\begin{definition}[Reduced couple]\label{reduced couple}
A couple of measures $(\mu_{0},\mu_{1})\in\calM(\YX)^{2}$ is called reduced if $\mu_{0}=\mu'_0, \mu_{1}=\mu'_{1}.$

\end{definition}

For every couple $(\mu_{0},\mu_{1}),$ the couple $(\mu'_{0},\mu'_{1})$ is
always reduced. Now, we have the following theorem that is a simplified version
of \cite[Corollary 3.5]{LiMiSa23FPGG}.

\begin{theorem}\label{injective}
  Let $\YX\subset\mathbb{R}^{d}$ be a compact, convex set with nonempty
  interior. Let $(\mu_{0},\mu_{1})\in\calM(\YX)^{2},$ and
  $\mu_{0}\simeq\calL^{d} .$  Then there exists  an optimal
  \TRAGRO\ couple $(q,\bfT )$ from $\mu_{0}$ to $\mu_{1},$ with
  $|\bfT (x){-}x |<\pi/2$ \ $ \Ld $-a.e.  If furthermore
  $\mu'_{1}\ll\calL^{d} ,$ then $\widetilde{\bfT }$ is essentially injective.
\end{theorem}

\begin{remark} A transport plan $\bfT$ as in Theorem \ref{injective} has a version
  that is fully injective. From now on, without loss of generality we will make
  the assumption that $\bfT$ is fully injective to simplify the arguments. Even
  more, a couple like that, will be called an injective optimal
  \TRAGRO\ couple from $\mu_{0}$ to $\mu_{1}.$
\end{remark}
 
In the next lemma, we will show that if the couple $(q,\bfT )$ is an
injective optimal \TRAGRO\ couple from $\mu_{0}$ to $\mu_{1},$
then it acts as an injective optimal \TRAGRO\ couple from
$(\mu_{0})_{|_{\A}}$ to $(\mu_{1})_{|_{\bfT(\A)}},$ for every measurable set $\A.$
Even more for any partition $\{\A_{i}\}$ of $\YX,$ the total
\TRAGRO\ cost squared is equal to the sum of the squares of the
\TRAGRO\ costs for each part of the partition. This
straightforward lemma will be used in the next section, for the construction of
measures that violate the minimum assumption for the MM scheme if the
minimum candidate does not have nice density bounds. These construction will be
achieved by cutting and gluing the potential candidate with other measures.
 
\begin{lemma}\label{split}
  Let $(\mu_{0},\mu_{1})\in\calM(\YX)^{2}$ with $\mu'_{0},\mu'_{1}\ll\calL^{d} $
  and $\mu_{1}''=0$. Let $(q,\bfT )$ be an injective optimal
  \TRAGRO\ couple from $\mu_{0}$ to $\mu_{1}$ and let
  $\A_{i}, i=1,\dots, n,$ be a partition of $\YX$. If $\mu^{i}_{0}$ is the
  restriction of $\mu_{0}$ on $\A_{i},$ and $\mu^{i}_{1},$ the restriction of
  $\mu_{1}$ on $\bfT (\A_{i}),$ then we have:
\begin{itemize} 
\item $(q,\bfT )$ is an injective optimal \TRAGRO\ couple from
  $\mu^{i}_{0}$ to $\mu^{i}_{1}$.
    \item $\sum_{i}\mu^{i}_{1}=\mu_{1}.$
    \item $\HK^{2}(\mu_{0},\mu_{1})=\sum_{i=1}^{n}\HK^{2}(\mu^{i}_{0},\mu^{i}_{1}).$
\end{itemize}	
\end{lemma}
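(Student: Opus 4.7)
The plan is to address the three claims in order; the first two follow quickly from the injectivity of $\bfT$ together with $\mu_{1}=\bfT_{\sharp}(q^{2}\mu_{0})$, while the equality of costs (and the optimality of the restricted couples that it implies) is the substantive point.

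For the partition identity and the validity of the restricted couples, I would note that injectivity of $\bfT$ makes $\{\bfT(\A_{i})\}_{i=1}^{n}$ a partition of $\bfT(\YX)$, and $\mu_{1}$ is supported on $\bfT(\YX)$ because $\mu_{1}=\bfT_{\sharp}(q^{2}\mu_{0})$. Hence $\sum_{i}\mu_{1}^{i}=\mu_{1}$. For any Borel set $B\subset \YX$, injectivity also yields $\bfT^{-1}(B\cap\bfT(\A_{i}))=\bfT^{-1}(B)\cap\A_{i}$, so
\[
\bfT_{\sharp}(q^{2}\mu_{0}^{i})(B) = \int_{\A_{i}\cap\bfT^{-1}(B)}\!\!q^{2}\,\d\mu_{0} = \mu_{1}(B\cap\bfT(\A_{i})) = \mu_{1}^{i}(B),
\]
which shows that $(q,\bfT)$ is a valid dilation-transportation couple from $\mu_{0}^{i}$ to $\mu_{1}^{i}$ in the sense of \eqref{eq:dilationtransport}.

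For the cost identity, the easy direction uses that $(q,\bfT)$ is an admissible (though not yet known to be optimal) couple for each restricted pair, so the general upper bound implicit in \eqref{eq:95} gives
\[
\HK^{2}(\mu_{0}^{i},\mu_{1}^{i}) \leq \int_{\A_{i}}\bigl(1+q^{2}-2q\cos(|x{-}\bfT(x)|)\bigr)\mu_{0}(\d x).
\]
Summing over $i$ and invoking optimality of $(q,\bfT)$ for the full pair yields $\sum_{i}\HK^{2}(\mu_{0}^{i},\mu_{1}^{i})\leq\HK^{2}(\mu_{0},\mu_{1})$.

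The reverse inequality $\HK^{2}(\mu_{0},\mu_{1})\leq\sum_{i}\HK^{2}(\mu_{0}^{i},\mu_{1}^{i})$ is where the argument has to do real work, and I expect this to be the main obstacle. The clean route is via the LET formulation (Theorem \ref{thm:LET}): for each $i$ pick an optimal plan $\bfH_{01}^{i}\in\calM(\YX\times \YX)$ for $(\mu_{0}^{i},\mu_{1}^{i})$ and set $\bfH_{01}:=\sum_{i}\bfH_{01}^{i}$. Because $\{\A_{i}\}$ and $\{\bfT(\A_{i})\}$ are honest partitions (thanks to injectivity of $\bfT$), the marginals of the individual $\bfH_{01}^{i}$ live on disjoint sets, so $\bfH_{01}$ is an admissible LET competitor for $(\mu_{0},\mu_{1})$, and its Radon--Nikodym densities $\sigma_{j}$ with respect to $\mu_{j}$ restrict on $\A_{i}$ (resp.\ $\bfT(\A_{i})$) to the corresponding densities arising from $\bfH_{01}^{i}$. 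Additivity of the three terms defining $\LET$ then gives
\[
\LET(\bfH_{01};\mu_{0},\mu_{1})=\sum_{i}\LET(\bfH_{01}^{i};\mu_{0}^{i},\mu_{1}^{i})=\sum_{i}\HK^{2}(\mu_{0}^{i},\mu_{1}^{i}),
\]
which majorizes $\HK^{2}(\mu_{0},\mu_{1})$ by Theorem \ref{thm:LET}. Combining the two inequalities forces equality throughout, which in turn forces the admissible couple $(q,\bfT)$ to attain the infimum in \eqref{eq:95} for each pair $(\mu_{0}^{i},\mu_{1}^{i})$, delivering the optimality asserted in the first bullet. The only subtle point is that the disjointness of supports used in the additivity of $\LET$ must be genuine rather than only up to null sets; this is precisely what the injectivity hypothesis on $\bfT$ provides, together with $\mu_{0}^{i}\ll\calL$.
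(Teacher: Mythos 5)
Your proposal is correct, and its overall skeleton matches the paper's: both arguments first verify $\mu^{i}_{1}=\bfT_{\sharp}(q^{2}\mu^{i}_{0})$ via injectivity (your computation with $\bfT^{-1}(B\cap\bfT(\A_{i}))=\bfT^{-1}(B)\cap\A_{i}$ is the same as the paper's test-function identity), and both obtain the cost identity by sandwiching between the ``easy'' inequality $\sum_{i}\HK^{2}(\mu^{i}_{0},\mu^{i}_{1})\leq\HK^{2}(\mu_{0},\mu_{1})$ (admissibility of the restricted couple plus optimality of the global one) and its reverse. The difference lies in how the reverse inequality is produced. The paper picks, for each piece, an \emph{optimal dilation-transportation couple} $(q^{i},\bfT^{i})$ for $(\mu^{i}_{0},\mu^{i}_{1})$, glues them into a global couple $(\widetilde q,\widetilde\bfT)$, checks $\widetilde\bfT_{\sharp}(\widetilde q^{2}\mu_{0})=\mu_{1}$, and concludes that its cost $\sum_{i}\HK^{2}(\mu^{i}_{0},\mu^{i}_{1})$ cannot fall below $\HK^{2}(\mu_{0},\mu_{1})$; this requires the remark that each pair $(\mu^{i}_{0},\mu^{i}_{1})$ is reduced (so that an optimal couple exists for it), which follows from $|\bfT(x)-x|<\pi/2$. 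You instead glue \emph{optimal LET plans} $\bfH^{i}_{01}$ and use additivity of the $\LET$ functional over the mutually singular pieces, invoking Theorem \ref{thm:LET} directly; this buys you the unconditional existence of the optimal objects being glued and avoids re-verifying the push-forward identity for the glued map, at the price of checking that the Radon--Nikodym densities $\sigma_{j}$ of the summed plan localize correctly — which, as you rightly note, is exactly what the injectivity of $\bfT$ (disjointness of the sets $\bfT(\A_{i})$ carrying the second marginals) guarantees. Both routes then extract the termwise optimality of $(q,\bfT)$ on each piece from the forced equality in the sum, as you do.
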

\begin{proof}
 We  have 
\begin{equation*}
\begin{split}
&\int_{\YX}\zeta(x)\mu^{i}_{1}(\d x)=\int_{\YX}\mathbb{I}_{\bfT (A^{i})}(x)\zeta(x)\mu_{1}(\d x)=\int_{\YX}\mathbb{I}_{\bfT (A^{i})}(\bfT (x))\zeta(\bfT (x))q^{2}(x)\mu_{0}(\d x)\\&=\int_{(\YX\setminus \A^{i})}\mathbb{I}_{\bfT (A^{i})}(\bfT (x))\zeta(\bfT (x))q^{2}(x)\mu_{0}(\d x)+\int_{A_{i}}\mathbb{I}_{\bfT (A^{i})}(\bfT (x))\zeta(\bfT (x))q^{2}(x)\mu_{0}(\d x)\\&=0+\int_{A_{i}}\zeta(\bfT (x))q^{2}(x)\mu_{0}(\d x)=\int_{\YX}\zeta(\bfT (x))q^{2}(x)\mu^{i}_{0}(\d x).
\end{split}
\end{equation*}
By summing over $i,$ we also obtain $\sum\mu^{i}_{1}=\mu_1.$ Since
$|\bfT (x){-}x|<\frac{\pi}{2},$ it holds that for every $i\in{1,\dots,n},$
the couple $(\mu^{i}_{0},\mu^{i}_{1})$ is reduced and we can construct an
optimal \TRAGRO\ couple $(q^{i},\bfT ^{i})$ for
$(\mu^{i}_{0},\mu^{i}_{1}),$ with cost, $\HK^{2}(\mu^{i}_{0},\mu^{i}_{1}).$ For
every $i,$ we have that $(q,\bfT )$ is a \TRAGRO\ couple
between $\mu^{i}_{0},$ and $\mu^{i}_{1}.$ Therefore we have
\begin{equation}
 \label{assass}
\begin{split}
\HK^{2}(\mu^{i}_{0},\mu^{i}_{1})&{=}\int_{\YX}\left(1+q^i(x)^{2}-2q^{i}(x) 
  \cos(|x{-}\bfT ^{i}(x)|)\right)\mu^{i}_{0}(\d x)\\
&\leq\int_{\YX}\left(1+q^{2}(x)-2q(x)\cos(|x{-}\bfT (x)|)\right)\mu^{i}_{0}(\d x).
\end{split}
\end{equation}
We now define $(\widetilde{q},\widetilde{\bfT })$ by
\begin{equation}
\widetilde{\bfT }(x)=\bfT ^{i}(x),\hspace{8pt} x\in A^{i} ,\hspace{32pt} \widetilde{q}(x)=q^{i}(x),\hspace{8pt} x\in A^{i}, 
\end{equation}
we have
\[
\mu_{1}=\sum_{i=1}^{n}\mu_{1}^{i}=\sum_{i=1}^{n} 
\widetilde{\bfT }_{\#}(\widetilde{q}^{2}\mu^{i}_{0})
=\widetilde{\bfT }_{\#}(\widetilde{q}^{2}\mu_{0}),
\]
and therefore it is a \TRAGRO\ couple for $(\mu_{0},\mu_{1})$
with total cost
\begin{equation}
\sum_{i=1}^{n}\HK^{2}(\mu^{i}_{0},\mu^{i}_{1})
 \leq \sum_{i=1}^{n}\int_{\YX}\left(1+q^{2}(x)-2q(x) 
 \cos\big(|x{-}\bfT (x)|\big)\right)\mu^{i}_{0}(\d x)=
\HK^{2}(\mu_{0},\mu_{1})
\end{equation} If at least one of the estimates in \ref{assass} is strict then
the above inequality is also strict which implies that
$(\widetilde{q},\widetilde{\bfT })$ is a \TRAGRO\ couple
that has less cost than  $(q,\bfT ),$ which contradicts the fact that the
latter is optimal. From the above we have that \eqref{assass} is an  equality
for every $i$, and therefore  $(q,\bfT )$ is an injective optimal
\TRAGRO\ couple between $\mu^{i}_{0},$ and $\mu^{i}_{1},$ and 
$\sum_{i=1}^{n}\HK 
^{2}(\mu^{i}_{0},\mu^{i}_{1})=\HK 
^{2}(\mu_{0},\mu_{1})$. 
\end{proof} 

We will also use the following results, which was discussed carefully in
\cite{LasMie19GPCA}. 

\begin{proposition}[Scaling property of $\HK$]
\label{pr:ScalHK}
For all $\mu_{0},\mu_{1}\in\calM(\YX)$ and $t_0,t_1 \geq 0$ we have
\begin{equation}
  \label{eq:ScalHK}
  \HK^{2}(t_0^2\mu_{0}, t_1^2\mu_{1})=t_0t_1 
  \HK^{2}(\mu_{0},\mu_{1}) +(t_0^2{-}t_0t_1)\mu_{0}(\YX) + (t_1^2
  {-} t_0t_1)\mu_{1}(\YX).
\end{equation}
Even more, if $\bfH_{01}$ is an optimal plan for the $\LET$ formulation of
$ \HK (\mu_{0},\mu_{1}),$ then $\bfH_{01}^{t_0t_1}= t_0t_1\bfH_{01}$ is an
optimal plan for $ \HK(t_0^2\mu_{0}, t_1^2\mu_{1}).$
\end{proposition}

Choosing $t_0/t_1=(\mu_1(X)/\mu_0(X))^{1/2}$ in \eqref{eq:ScalHK} we
obtain the lower bound 
\begin{equation}\label{lower bound}
  \HK^{2}(\mu_{0}, \mu_{1})\geq
  \left(\sqrt{\mu_{0}(X)}-\sqrt{\mu_{1}(X)}\right)^{2}. 
\end{equation}

\subsection{The Spherical $\HK$ distance $\SHK$}
\label{su:SpherHK}

The Spherical Hellinger-Kantorovich space $(\calP(\YX),\SHK)$ was introduced in
\cite{LasMie19GPCA}, and the distance metric is related to the
Hellinger-Kantorovich distance $\HK$ restricted to
$\calP(\YX){\times}\calP(\YX)$ through the formula by
\[ \SHK(\nu_0,\nu_1) = \arccos\left( 1- \frac12 \,\HK^2(\nu_0,
      \nu_1)\right)=2\arcsin\left(\frac12\,\HK(\nu_0, \nu_1)\right) .
\]

The important point is that $(\calP(\YX),\SHK)$ is still a geodesic space in
the sense of Definition \ref{de:Geod}. Furthermore, the work
\cite{LasMie19GPCA} demonstrates how all geodesics connecting $\nu_0$ and
$\nu_1$ in $(\calP(X),\SHK)$ can be obtained by suitably projecting, which
involves dividing by the total mass, and reparametrizing the geodesics in
$(\calM(X),\HK)$.

\section{Density bounds for the MM scheme}
\label{se:density bounds} 

The objective of this section is to demonstrate that when starting with a
$\mu_0$ exhibiting favorable density bounds, the minimizers of the MM schemes
\eqref{scheme} also possess similar bounds. These bounds will later be utilized
to recover concavity properties for the squared distance along the geodesics
that interpolate the points generated by the scheme. The approach is an
extension of an idea by Felix Otto developed in \cite{Otto96DDDE}. There, the
author proved that for a specific class of functionals $\E,$ and for measures
$\mu_{0}$ that have density bounded from below by a number $c_{\min}>0,$ the
one step minimizer of the MM scheme,
$\mu_{1}=\arg\min\left\{\frac{W_{2}(\mu_{0},\mu)}{2\tau}+\E(\mu)\right\},$ has
also the same property. The main argument was, that if the set of points with
density smaller than $c_{min}$ is not essential empty, then mass must have
moved outside from it to an another set, resulting in density bigger than
$c_{min}.$ This would imply, that keeping some of the mass at place would not
only have been cheaper with respect to the Wasserstein distance, but also would
have resulted to a more ``uniform" distribution of the density and therefor a
smaller value of the functional, getting a contradiction to the assumption that
$\mu_{1}$ is a minimizer. Felix Otto, used this argument in \cite{Otto96DDDE}
to prove that the MM scheme for the p-density functionals, converge to
solutions of the doubly degenerate diffusion equations. Our arguments are of
similar nature, although we have to take into account that our setting also
allows for destruction and creation of mass.

\subsection{ Motivation of density bounds}
\label{su:MotivBounds}

When analyzing the minimizing movement scheme for diffusion equations in
the Kantorovich-Wasserstein setting, comparison principles for parabolic
equations play a crucial role. When adding reaction terms like for $\HK$ or
$\SHK$ gradient-flow equations, it is important to see how these comparison
principles can still be exploited. This is especially nontrivial in the case of
$\SHK$, where a nonlocal term appears. 

As the diffusion is well understood, it is worthwhile to look at the pure ODE
case. Consider any convex domain $\YX\subset \R^d$ with $\calL^d(\YX)=1.$
Moreover, we restrict our view to measures with spatially constant Lebesgue
density, i.e.\ $\mu(t) = c(t) \calL^d$ as special solutions for the gradient
system $(\calM(\YX),\E,\HK)$. Clearly, the equation for the scalar $c$ is
\[
\dot c  = - 4\,c\, E'(c). 
\]
Because of the above choices we have $\HK(c_0 \calL^d,c_1\calL^d)=\left(
\sqrt{c_1}{-}\sqrt{c_0}\right)^2$, and 
the MM scheme reduces to 
\begin{equation}
  \label{eq:Scalar.EuLag}
  \frac1{2\tau} \left( \sqrt{c_1}{-}\sqrt{c_0}\right)^2+ E(c_1)\leadsto
\min\limits_{c_1\geq 0} 
\quad \longleftrightarrow \quad
1- \sqrt{\tfrac{\ds c_0}{\ds c_1}} + 2\tau E'(c_1)=0. 
\end{equation}
Assuming $ E'(c_\mafo{low}) \leq 0 \leq E'(c_\mafo{upp})$ we obtain the
following trivial observations for the MM scheme solutions:
\begin{align*}
\text{(D1)} &\qquad c_0 \geq c_\mafo{upp} \quad \Longleftrightarrow 
\quad c_1 \leq c_0,
\\
\text{(D2)} &\qquad  c_0 \leq c_\mafo{low} \quad \Longleftrightarrow 
\quad c_1 \geq c_0 .
\\
\intertext{However, in the case inbetween, we obtain nontrivial estimates:}  
\text{(D3)} &\qquad c_1 \leq \max\Big\{a, \,\frac{
  \ds c_0}{(1{+}2\tau\min\{E'(a),0\})^2}\Big\} 
\text{ whenever }2\tau E'(a)>-1;
\\
\text{(D4)}& \qquad c_1 \geq \min \Big\{ b,\:\frac{c_0}{(1 {+} 2\tau \max\{
  E'(b),0\})^2} \Big\} \text{ for all } b \geq 0.  
\end{align*}
To see that the upper estimate in (D3) holds we set $\mu_a:=\min\{E'(a),0\}$
with $0\geq \mu_a >-1/(2\tau)$ and assume that (D3) does not hold, i.e. (i)
$c_1>a$ and (ii) $c_1> c_0/(1{+}2\tau \mu_a)^{-2}$. Then, by (i) and
monotonicity of $E'$ we have
$0\leq (1{+}2\tau \mu_a)^2 \leq (1{+}2\tau E'(c_1))^2$. Exploiting the
Euler-Lagrange equation in \eqref{eq:Scalar.EuLag} we continue
\[
(1{+}2\tau \mu_a)^2 \leq (1{+}2\tau
E'(c_1))^2  \overset{\text{EL eqn.}}= c_0/c_1 \overset{\text{(ii)}} <
(1{+}2\tau \mu_a)^2 ,  
\]   
which is the desired contradiction. The lower estimate in (D4) follows similarly. 

We have considered the simple ODE case because it turns out that similar
estimates hold for the densities in a true minimization step for
$(\calM(\YX),\E,\HK)$, see \eqref{eq:DensUppBd.HK} in Proposition
\ref{pr:discr.max.princ.HK.upp} for the upper estimate (D3) and Proposition
\ref{pr:discr.max.princ.HK.low} for the lower estimate (D4).

The ``spherical'' case for $(\calM(X),\E,\SHK)$ is in fact much better,
because no sign conditions for $E'(c)$ are needed. To see this we again
consider the pure Spherical Hellinger space $(\calP(X),\E,\SHe)$ with
$\SHe(\nu_0,\nu_1) = 2 \arcsin\big(\He(\nu_0,\nu_1)/2\big)$ and
$\He(\mu_0,\mu_1)^2 = \int_X \big[\big(\frac{\rmd \mu_0}{\rmd\mu}\big)^{1/2} -
\big(\frac{\rmd \mu_0}{\rmd\mu}\big)^{1/2}\big]^2 \d \mu$ for any $\mu$ with
$\mu_0{+}\mu_1\ll \mu$. The corresponding gradient flow for for absolutely
continuous measures $\nu(t)=c(,\cdot)\rmd x$ leads to 
\[
 \dot c(t,x)  = - 4 c(t,x) \Big( E'(c(t,x)) - 
 \int_X c(t,y) E'(c(t,y)) \,\d  y\Big). 
\]      
For this flow it can be shown that $t\mapsto \inf c(t,\cdot)$ is increasing and
$t \mapsto \sup c(t,\cdot)$ is decreasing. For this we consider any smooth
convex function $\varphi:{]0,\infty[}\to \R$ and observe 
\begin{align*}
 \frac14\frac{\rmd}{\rmd t} \int_X \varphi(c(t,x))\ddd x 
  & = \frac14\int_X \varphi'(c) \dot c \ddd x
    = \int_X E'(c)c \ddd x \int_X \varphi'(c)c \ddd x - \int_X
      E'(c)\varphi'(c) c \ddd x 
 \\
 & = \int_0^\infty E'(c) \ddd F_t(c)  \int_0^\infty \varphi'(c) \ddd F_t(c)
    - \int_0^\infty E'(c)\varphi'(c) \ddd F_t(c) \overset{*}\leq 0,
\end{align*}  
where $  F_t(b):=\int_X c(t,x) 1\!\!1_{c(t,\cdot)\leq b}(x) \ddd x $ with
$0\leq F_t(c) \leq F_t(\infty)=1$. The estimate $\overset*\leq $ is a
well-known rearrangement estimate following from the monotonicities of $E'(c)$
and $\varphi'$, see \cite[Ch.\,10.13]{HaLiPo34I}. 

Given $c_0$ with $\underline c \leq c_0(x) \leq \overline c$ we set $c_*=
\frac12\big( \overline c{+}\underline c\big)$ and $\delta = \overline c -
c_*$. For $p\geq 2$ we choose $\varphi(c) =
|c{-}c_*|^{p}$. Using $\calL^d(X)=1$ again we find
$\|c(t){-}c_*\|_{\rmL^p} \leq \|c_0{-}c_*\|_{\rmL^p} \leq \delta$. In the limit
$p\to \infty$ we are left with $\|c(t){-}c_*\|_{\rmL^\infty}  \leq \delta$,
which implies $c(t,x) \in [c_*{-}\delta, c_*{+}\delta] = [\underline c,
\overline c]$ as desired.

\subsection{A density estimate for one-step minimizers}
\label{su:DensEstimOneStep}

We will first prove a lemma that works for both spaces and their respective
minimization movement schemes
\begin{equation}
   \label{schemerepeat}
 \mu_{1}=\inf_{\mu \in\calM(\YX)}\left\{\frac{\HK^{2}(\mu_{0},\mu)}{
     2\tau}+\E(\mu)\right\}, 
\qquad \mu_{1} =\inf_{\mu \in \calP(\YX) }
\left\{\frac{\SHK^{2}(\mu_{0},\mu)}{2\tau}+\E(\mu)\right\}, 
\end{equation}  
under the sole assumption that the function $E,$ which generates $\E$ in
\eqref{EntropyFunctional}, is convex. We will write \eqref{schemerepeat}$_\HK$
and \eqref{schemerepeat}$_\SHK$ to distinguish the two different incremental
minimization schemes.  

Note that for our bounded and convex domains $X\subset \R^d$ the convergence
with respect to $\HK$ or $\SHK$ is equivalent to the weak* convergence of
measures. Moreover, by our assumptions $\E$ is weakly* lower semicontinuous and
linearly bounded from below. Hence in both cases the functionals to be
minimized are coercive (for $\tau>0$ sufficiently small) and weakly* lower
semicontinuous. Hence, the minimizing movement scheme is well defined for
arbitrarily many steps.

We will start by providing a lemma that is a generalization of Otto's argument
in \cite[Lem.\,1.1.2]{Otto96DDDE}.  Let $\mu_{1}$ be defined as the
one-step solution to either of the minimization schemes
\eqref{schemerepeat}. According to the Lemma, mass can be transferred from a
set $A$ to some set $\bfT(A),$ only if it results in a situation where the
density $\rho_{1}=\frac{\d\mu_{1}}{\d \Ld }$ at the destination $\bfT(A),$ is
less than the  density  in the original $A$. In the case of the
Wasserstein distance where mass can only be transported, this is enough to
prove bounds for $\rho_{1},$ by studying the set where the density is below the
minimum of $\rho_{0}.$ However, for  our distances $\HK$ and $\SHK$
the arguments are a bit more involved.

\begin{lemma}
\label{uniformitypreference}
Let $X \subset \R^d$ be a compact, convex set with nonempty
interior. Furthermore let $\E$ be as in \eqref{EntropyFunctional} with $E$
being a convex real-valued function.  Let finally $\mu_{0}\simeq \Ld ,$ and
$\mu_{1} \in \calM(\YX)$ as in \eqref{schemerepeat}$_\HK$ or
\eqref{schemerepeat}$_\SHK.$ Then, either $\mu_{1}\equiv 0$ or
$\mu_{1}\simeq \Ld $ and, in both cases $\HK$ and $\SHK$, the injective optimal
\TRAGRO\ couple $(q,\bfT )$ from $\mu_{0}$ to $\mu_{1}$ provided by Theorem
\ref{injective} satisfies 
\begin{equation}
\label{eq:densT.leq.dens}
 \rho_{1}(\bfT (x))\leq \rho_{1}(x) \hspace{8pt}\left(\text{alternatively }  \rho_{1}(\bfT ^{-1}(x))\geq \rho_{1}(x)\right),\hspace{8pt}  \text{almost everywhere.}
 \end{equation} 
\end{lemma}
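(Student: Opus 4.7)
The plan is to argue by contradiction in the spirit of Otto's approach in \cite{Otto96DDDE}: if the density inequality fails, I construct a competitor $\widetilde\mu_1$ that strictly decreases the MM functional $\HK^2(\mu_0,\cdot)/(2\tau)+\E(\cdot)$ (and analogously its $\SHK$ version), contradicting minimality of $\mu_1$. Assume $\rho_1\circ\bfT>\rho_1$ on a set of positive Lebesgue measure. Since $\bfT$ is essentially injective and $\bfT(x)\ne x$ on this set, a standard localisation argument (restrict via Lusin to a compact set on which $\bfT$ is continuous, then intersect with a sufficiently small cube relative to the displacement) produces a measurable $A\subset\YX$ with $|A|>0$, $\bfT(A)\cap A=\emptyset$, and constants $a<b$ such that $\rho_1\le a$ on $A$ and $\rho_1\ge b$ on $B:=\bfT(A)$. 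By Theorem \ref{injective} together with Lemma \ref{split}, $(q,\bfT)$ restricts to an injective optimal dilation-transportation couple from $\mu_0|_A$ to $\mu_1|_B$, and $\HK^2(\mu_0,\mu_1)=\HK^2(\mu_0|_A,\mu_1|_B)+\HK^2(\mu_0|_{\YX\setminus A},\mu_1|_{\YX\setminus B})$.

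I build the competitor $\widetilde\mu_1$ by keeping the plan unchanged on $\YX\setminus A$; on $A$ I split $\mu_0|_A$ into fractions $(1{-}s)$ and $s$, sending the first fraction to $B$ via $(q,\bfT)$ as before and \emph{keeping} the second fraction at $A$ with the same dilation $q$ and identity transport. Formally this is realised as the dilation-transportation system on $Y=A\sqcup A$ with $\nu=(1{-}s)\mu_0|_A\oplus s\mu_0|_A$; the target marginal becomes $(1{-}s)\mu_1|_B+s\,q^2\mu_0|_A$, concentrated on $A\cup B$. The identity $\int_A q^2\,\d\mu_0=\mu_1(B)$ makes the modification mass-preserving on $A\cup B$, which is essential for the $\SHK$ case where $\widetilde\mu_1$ must remain in $\calP(\YX)$. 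Glueing with the unchanged plan on $\YX\setminus A$ and applying the upper bound from \eqref{eq:95} yields
\[
\HK^2(\mu_0,\widetilde\mu_1)\le\HK^2(\mu_0,\mu_1)-s\int_A 4q\sin^2\!\bigl(|x{-}\bfT(x)|/2\bigr)\,\d\mu_0(x);
\]
since $\bfT(A)\cap A=\emptyset$ the displacement $|x{-}\bfT(x)|$ is bounded below by a positive constant on $A$, so the HK term strictly decreases by some $sC_1>0$. A direct estimation using convexity of $E$ and the density gap gives
\[
\E(\widetilde\mu_1)-\E(\mu_1)\le s\bigl(E'(a)-E'(b)\bigr)\mu_1(B)+O(s^2),
\]
whose leading term is nonpositive by the monotonicity of $E'$. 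Hence the total functional strictly decreases for all sufficiently small $s>0$, contradicting minimality of $\mu_1$. The dual bound $\rho_1(\bfT^{-1}(x))\ge\rho_1(x)$ follows by symmetry applied to the optimal transport from $\mu_1$ to $\mu_0$.

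For the dichotomy $\mu_1\equiv0$ or $\mu_1\simeq\mathcal L^d$, the absolute continuity $\mu_1\ll\mathcal L^d$ follows from $\mu_0\simeq\mathcal L^d$ and the Monge-type structure of the optimal transport furnished by Theorem \ref{injective}. For the positivity $\rho_1>0$ a.e.\ when $\mu_1\not\equiv0$, I would assume $\rho_1=0$ on a set $Z$ of positive Lebesgue measure and run a similar competitor: keep a sliver of $\mu_0|_Z$ in place with a small dilation $\sqrt{\varepsilon/\rho_0}$ rather than destroying or transporting it out. This produces an HK cost change of order $-\sqrt\varepsilon$ (the term $(1{-}q')^2$ drops from $1$ to $1-2\sqrt{\varepsilon/\rho_0}+O(\varepsilon)$), whereas the $\E$ change is only of order $\varepsilon$ (or $\varepsilon\log\varepsilon$ for the $r\log r$ entropy), so for small $\varepsilon$ the $\sqrt\varepsilon$-gain dominates and again contradicts minimality. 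The main technical obstacle throughout is balancing the HK gain against the potentially opposite-signed $\E$ change: for the central inequality this is straightforward because the HK contribution is strictly negative of order $s$ while the $\E$ contribution is at most zero of the same order; for the positivity claim the asymmetric orders $\sqrt\varepsilon$ versus $\varepsilon$ resolve the matter, with some additional care needed when $E'(0^+)=-\infty$.
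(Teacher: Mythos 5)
Your competitor for the central inequality \eqref{eq:densT.leq.dens} is essentially the paper's Step~A: after extracting $A$ with $\rho_1\le a<b\le\rho_1\circ\bfT$ on $A$, you return a fraction $s$ of the mass arriving at $\bfT(A)$ back onto $A$ and compare the two terms of the MM functional. Weighting the returned mass by $q^2\rho_0$ instead of by $\mu_1(\bfT(A))\,\mu_0|_A/\mu_0(A)$ is an inessential variation. Where you genuinely differ is the source of strictness: you extract a strictly negative $O(s)$ contribution from the transport part $4q\sin^2(|x{-}\bfT(x)|/2)$ of the cost (which requires the displacement lower bound you obtain via Lusin plus a small cube, and $\mu_1(\bfT(A))=\int_A q^2\,\mu_0(\d x)>0$), whereas the paper only shows the $\HK^2$ term does not increase and obtains the strict drop from the entropy via the gap $E'(a)\le E'(\tfrac{a+b}{2})\le E'(b)$. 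Both are legitimate; your route has the mild advantage of not degenerating when $E$ happens to be affine on $[a,b]$, and it transfers to $\SHK$ because your competitor preserves total mass and $\SHK$ is an increasing function of $\HK$.

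There are, however, two genuine gaps. First, the assertion that $\mu_1\ll\calL^d$ ``follows from the Monge-type structure'' is not correct: the push-forward of an absolutely continuous measure under a Borel, even injective, map can be singular, and worse, Theorem \ref{injective} guarantees essential injectivity of $\bfT$ only \emph{under the hypothesis} $\mu_1'\ll\calL$, so you cannot invoke the injective couple before absolute continuity is established. The paper proves $\mu_1\ll\calL^d$ by running the very same competitor, setting $\rho_1(\bfT(x))=\infty$ on $\bfT^{-1}(\text{singular set})$ and using the recession slope $E'_\infty$ in the entropy comparison (Step~A.2.2); your proof needs this case and currently dismisses it. Second, the positivity part ($\calL^d\ll\mu_1$ when $\mu_1\not\equiv0$) is incomplete. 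The $-C\sqrt{\varepsilon}$ gain from replacing $(1-0)^2$ by $(1-\sqrt{\varepsilon/\rho_0})^2$ is available only where mass of $\mu_0$ is being \emph{purely destroyed} ($q=0$); where it is transported out, re-optimizing the splitting changes the cost only by $O(\varepsilon)$, which no longer dominates the $O(\varepsilon)$ entropy change, so you would first have to prove that destruction actually occurs on the zero set. In addition, your sliver competitor increases the total mass by $\varepsilon\,\calL^d(Z')$ and therefore leaves $\calP(\YX)$, so it is inadmissible for the $\SHK$ scheme that the lemma also covers. The paper avoids both problems by using no competitor at all in this step: the LET optimality conditions ($\sigma_0\sigma_1=\cos^2_{\pi/2}(\sfd_X)$ with $\sfd_X<\pi/2$ a.e.) forbid destruction at points of the zero set lying within distance $\pi/2$ of $\supp\mu_1$, and combining forced transport with the already proven inequality yields $0<\rho_1(\bfT(x))\le\rho_1(x)=0$ directly.
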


\noindent
\begin{proof}
We are going to prove the statement in two parts. First we are going to show
that $\mu_{1}\ll  \Ld $, with  
$\rho_{1}(\bfT (x))\leq \rho_{1}(x).$ On the second part we are going to
prove $ \Ld \ll\mu_{1}$. \smallskip

\underline{\em Step A: Proving $\mu_{1}\ll \Ld $ and
  $\rho_{1}(\bfT (x))\leq \rho_{1}(x).$ Constructing the counterexample.}   \medskip
  
We note that since $\mu_{0}\simeq \Ld ,$ it exits a \TRAGRO\ couple from
$\mu_{0}$ to $\mu_{1}.$ Let $\mu^{s}$ denote the singular part of $\mu$ in the
Lebesgue decomposition. In case $\mu^{s}_{1}\not\equiv 0,$ it exists set $\B$
such that $\mu^{s}_{1}(\B)\neq0,$ and $ \Ld (\B)=0.$ The identity

\begin{equation}
   0<\mu_{1}(B)=\int_{B}\mu_{1}(\d x)=\int_{T^{-1}(B)}q^{2}(x)\mu_{0}(\d x ),
\end{equation}
 guarantees that for the set $\A=\bfT^{-1}(\B)$ we have that $\mu_{0}(\A)\neq 0$ and therefore $ \Ld (\A)>0.$
Having in mind to generate a contradiction to the assumption, we get that in both cases where either $\mu^{s}_{1}\not\equiv 0,$ or $\mu^{s}_{1}\equiv 0,$ if the assumption is violated then there exists  $a < b$ and  a set $\A$ with
$ \Ld (\A)>0$ such that 
$\rho_{1}(x)< a < b < \rho_{1}(\bfT (x))$ for every $x\in \A$.   \medskip
  When $\mu^{s}_{1}(\bfT(\A))\neq 0,$ while $ \Ld (\bfT(\A))=0,$ we set $\rho_{1}(\bfT (x))$ equal to $\infty.$  We define $\mu^{\bfT (\A)}_{1}$ to be the restriction of $\mu_{1}$ onto $\bfT (\A)$, and
$\mu^{\A}_{0}$ the restriction of $\mu_{0}$ onto $\A$.  For $0<t<1$, we define the
measure
$\mu^{t}_{1}=\mu_{1} - t \mu^{\bfT (\A)}_{1}+
t\frac{\mu^{\bfT (\A)}_{1}(\YX)}{\mu_{0}^{\A}(\YX)}\mu_{0}^{\A},$ which satisfies
$\mu^t_1(\YX)=\mu_1(\YX)$. Moreover, by assumption we have $A\cap
\bfT(A)=\emptyset$ and can decompose $\mu^t_1$ as 
\[
\mu^{t}_{1} = \mu_1\big|_{X\setminus(A\cap \bfT(A))} + (1{-}t)
\mu_1\big|_{\bfT(A)} + \Big( \mu_1\big|_A + t \frac{\mu_1(\bfT(A))}{\mu_0(A)}
\mu_0\big|_A\Big). 
\]
Thus, nothing is changed on $X\setminus(\A\cap \bfT(\A))$, while mass is taken
away on $\bfT(\A)$ proportional to $\mu_1^{\bfT(\A)}=\mu_1|_{\bfT(\A)}$ and added on $\A$
proportional to $\mu_0^{\A}=\mu_0|_{\A}$.  \medskip

We will
prove that the $\HK $ distance between $\mu_{0}$ and $\mu_{1}$ is not smaller than
the resulting cost for this new \TRAGRO\ couple and therefore
not smaller than $\HK (\mu_{0},\mu^{t}_{1}).$ At the same time, we will show that
$\E(\mu_{1}^{t})< \E(\mu_{1}),$ for small enough $t,$ leading this way to a
contradiction.\medskip

\underline{\em Step A.1: Proving that the constructed measure is closer to $\mu_{0}$.} By applying Lemma \ref{split} for $\A$ and $\YX\setminus \A$, we obtain By applying Lemma \ref{split} for $\A$ and $\YX\setminus \A$, we
obtain 
\begin{equation*}
\begin{split}
&\HK^{2}(\mu_{0},\mu_{1})=\HK^{2}(\mu_{0}{-}\mu^{\A}_{0},\mu_{1}{-}\mu^{\bfT (\A)}_{1})+
\HK^{2}(\mu^{\A}_{0},\mu^{\bfT (\A)}_{1})
\\
&\ \ =  
  \HK^{2}(\mu_{0} {-} \mu^{\A}_{0} , \mu_{1} {-} \mu^{\bfT (\A)}_{1})
  +\HK^{2}\left(\mu^{\A}_{0},(1{-}t)\mu^{\bfT (\A)}_{1} +
    t\frac{\mu^{\bfT (\A)}_{1}(\YX)}{\mu_{0}^{\A}(\YX)}\mu_{0}^{\A}\right) 
\\
&\hspace{54pt}+ \HK^{2}(\mu^{\A}_{0},\mu^{\bfT (\A)}_{1})  - \HK
^{2}\left(\mu^{\A}_{0},(1{-}t)\mu^{\bfT (\A)}_{1} +
  t\frac{\mu^{\bfT (\A)}_{1}(\YX)}{\mu_{0}^{\A}(\YX)}\mu_{0}^{\A}\right) 
\\
&\overset{\text{subadd.}}\geq \!\!
\HK^{2}(\mu_{0},\mu^{t}_{1}) + \HK^{2}(\mu^{\A}_{0},\mu^{\bfT (\A)}_{1})  - \HK
^{2}\left(\mu^{\A}_{0},(1{-}t)\mu^{\bfT (\A)}_{1} +
  t\frac{\mu^{\bfT (\A)}_{1}(\YX)}{\mu_{0}^{\A}(\YX)}\mu_{0}^{\A}\right)
\\
& \overset{\text{subadd.}}\geq\!\!
\HK^{2}(\mu_{0},\mu^{t}_{1}) + \HK^{2}(\mu^{\A}_{0},\mu^{\bfT (\A)}_{1})  - \HK
^{2}\big((1{-}t)\mu^{\A}_{0},(1{-}t)\mu^{\bfT (\A)}_{1} \big) -\HK^{2}\Big(
  t\mu^{\A}_{0}, t \frac{\mu^{\bfT (\A)}_{1}(\YX)}{\mu_{0}^{\A}(\YX)}\mu_{0}^{\A}\Big)
\\
& \overset{\text{\eqref{eq:ScalHK}}}\geq 
\HK^{2}(\mu_{0},\mu^{t}_{1}) + t\left(\HK^{2}(\mu^{\A}_{0},\mu^{\bfT (\A)}_{1}) -\HK
  ^{2}\left(
    \mu^{\A}_{0},\frac{\mu^{\bfT (\A)}_{1}(\YX)}{\mu_{0}^{\A}(\YX)}\mu_{0}^{\A}\right)\right)\geq \HK^{2}(\mu_{0},\mu^{t}_{1}) .
\end{split}
\end{equation*}
(See \cite[Lem.\,7.8]{LiMiSa18OETP} for the subadditivity of $\HK^2$.) 
In the last estimate we used that starting from a measure $\mu_{0}$, then among
all measures with a given mass $M$, the measure $M\mu_{0}$ has the least
distance. Indeed, we have 
\begin{equation*}
\begin{split}
\HK^{2}\left(\mu^{\A}_{0},\frac{\mu^{\bfT (\A)}_{1}(\YX)}{\mu_{0}^{\A}(\YX)}\mu_{0}^{\A}\right)&\overset{\eqref{eq:ScalHK}}=\sqrt{\frac{\mu^{\bfT (\A)}_{1}(\YX)}{\mu_{0}^{\A}(\YX)}}
	\HK^{2}(\mu^{\A}_{0},\mu^{\A}_{0}) +\left(\frac{\mu^{\bfT (\A)}_{1}(\YX)}{\mu_{0}^{\A}(\YX)}{-}\sqrt{\frac{\mu^{\bfT (\A)}_{1}(\YX)}{\mu_{0}^{\A}(\YX)}}\right)\mu^{\A}_{0}(\YX)\\
	&\hspace{-6em} + \left(1
	{-}\sqrt{\frac{\mu^{\bfT (\A)}_{1}(\YX)}{\mu_{0}^{\A}(\YX)}}\right)\mu^{\A}_{0}(\YX)=\left(\sqrt{\mu^{\bfT (\A)}_{1}(\YX)}-\sqrt{\mu^{\A}_{0}(\YX)}\right)^{2}\
      \overset{\eqref{lower bound}}\leq\HK^{2}(\mu^{\A}_{0},\mu^{\bfT (\A)}_{1})
\end{split}
\end{equation*}
We will treat the case where $\mu_{1}$ has a singular part and the case
$\mu_{1}\ll  \Ld $ separately.\medskip

\underline{\em Step A.2.1: Entropy functional estimate; the case $\mu^{s}\equiv 0$} 
\begin{equation}
  \label{differe}
\begin{split}
\E(\mu_{1}^{t})-\E(\mu_{1})&=\int_{X}\left(E(\rho_{1}^{t}(x))-E(\rho_{1}(x)) \right) \Ld (\d x)
\\&\leq\int_{X}E'(\rho_{1}^{t}(x))\left(\rho_{1}^{t}(x)-\rho_{1}(x)\right) \Ld (\d x)\\&=
\int_{X}\left(E'(\rho_{1}^{t}(x)-E'\left(\tfrac{a+b}{2}\right)
\right)(\rho_{1}^{t}(x){-}\rho_{1}(x)) \Ld (\d x),
\end{split}
\end{equation}
where in the last term the constant $E'(\tfrac{a+b}{2})$ could be inserted
because of $\int_X \rho^t \d  \Ld  = \mu^t(\YX) = \mu_1(\YX) = \int_X \rho_1 \d
 \Ld $. The integrand in the last term of \eqref{differe} can be estimated
as follows:
\begin{align*}
\nonumber
&\left(E'(\rho_{1}^{t}(x))-E'\left(\tfrac{a+b}{2}\right)
\right)(\rho_{1}^{t}(x)-\rho_{1}(x))
\\&
= t\Big(E'\Big(\rho_{1}(x) {+} 
  t \frac{\mu^{\bfT (\A)}_{1}(\YX)}{\mu_{0}^{\A}(\YX)}\rho_{0}^{\A}(x) {-} 
  t\rho^{\bfT(\A)}_{1}(x)\Big) - E'\big(\frac{a+b}{2}\big)\Big) 
  \Big(
  \frac{\mu^{\bfT (\A)}_{1}(\YX)}{\mu_{0}^{\A}(\YX)}\rho_{0}^{\A}(x) 
  - \rho^{\bfT(\A)}_{1}(x) \Big)
\\&
\leq
  \begin{cases}
    t\Big(E'\big(a+t\frac{\mu^{\bfT (\A)}_{1}(\YX)}{\mu_{0}^{\A}(\YX)}\rho_{0}(x)
      \big)-E'\left(\frac{a+b}{2}\right)\Big)
    \frac{\mu^{\bfT (\A)}_{1}(\YX)}{\mu_{0}^{\A}(\YX)}\rho_{0}(x) & \,\text{on }\A, \\
    t\left(E'\left(\frac{a+b}{2}\right) - E'\left(b - t\rho_{1}(x)
      \right)\right)\left(\rho_{1}(x) \right) &\,\text{on } \bfT (\A).
  \end{cases}
\end{align*}
In both cases the factor multiplying $t$ is negative for very small
values of $t$. Hence, we have  shown $\E(\mu^t)< E(\mu_1)$ which is the
desired contradiction.

\underline{\em Step A.2.2: Entropy functional estimate; the case $\mu^{s}_{1}\not\equiv 0.$}
Let $\B$ such that $ \Ld (\B)=0$ and $\mu^{s}_{1}(\B^{c})=0$
\begin{equation}
  \label{differe2}
\begin{split}
&\E(\mu_{1}^{t})-\E(\mu_{1})=\int_{\B}\left(E(\rho_{1}^{t}(x))-E(\rho_{1}(x)) \right) \Ld (\d x) +E'_{\infty}\mu^{t}_{1}(\B^{c})-E'_{\infty}\mu_{1}(\B^{c})
\\&\leq\int_{\B}E'(\rho_{1}^{t}(x))\left(\rho_{1}^{t}(x)-\rho_{1}(x)\right) \Ld (\d x)+E'_{\infty}\mu^{t}_{1}(\B^{c})-E'_{\infty}\mu_{1}(\B^{c})\\&=
\int_{\B}\left(E'(\rho_{1}^{t}(x))-E'\left(\tfrac{a+b}{2}\right)
\right)(\rho_{1}^{t}(x){-}\rho_{1}(x)) \Ld (\d x)\\
&\quad + \left(E'_{\infty}-E'\left(\frac{a+b}{2}\right)\right) 
  ( \mu^{t}_{1}(\B^{c})-\mu_{1}(\B^{c})), 
\end{split}
\end{equation}
where in the last term the constant $E'(\tfrac{a+b}{2})$ could be inserted
because of $ \mu^t(\YX) = \mu_1(\YX).$ The integral in the last term of \eqref{differe} can be estimated
as follows:
\begin{equation}
\begin{split}
&\int_{\B}\left(E'(\rho_{1}^{t}(x))-E'\left(\tfrac{a+b}{2}\right)\right)(\rho_{1}^{t}(x){-}\rho_{1}(x)) \Ld (\d x)\\&\leq
\int_{\A}\left(E'(\rho_{1}^{t}(x))-E'\left(\tfrac{a+b}{2}\right)\right)(\rho_{1}^{t}(x){-}\rho_{1}(x)) \Ld (\d x)\\&\leq \int_{\A}t\left(E'\left(a+t\frac{\mu^{\bfT (\A)}_{1}(\YX)}{\mu_{0}^{\A}(\YX)}\rho_{0}(x)\right)-E'\left(\frac{a+b}{2}\right)\right)\frac{\mu^{\bfT (\A)}_{1}(\YX)}{\mu_{0}^{\A}(\YX)}\rho_{0}(x),
\end{split}
\end{equation}
while
\begin{equation}
  \mu^{t}_{1}(\B^{c})-\mu_{1}(\B^{c})= (\mu^{t}_{1}(\bfT(\A))-\mu_{1}(\bfT(\A)))=  t\left(E'\left(\frac{a+b}{2}\right) - E'_{\infty} \right) \mu_{1}(\bfT(\A))
\end{equation}
In both cases the factor multiplying $t$ are negative for very small
values of $t$. Hence, we have  shown $\E(\mu^t)< E(\mu_1)$ which is the
desired contradiction.\medskip

\underline{\em Step B: Proving $\mu_{1}\equiv 0$ or
  $ \Ld \ll\mu_{1}$.} We will assume that $\mu_{1}\not\equiv 0$ and
that there exists $\B=\{x:\rho_{1}(x)=0\}$ with $ \Ld (\B)>0$
to reach a contradiction.\smallskip

Since  $ \Ld (\B)>0,$ there exist $x_{0}\in X$ and $r_0\in
(0,\frac{\pi}{4}) $ such that $ \Ld (B(x_{0},r_{0})\cap \B)> 0$ 
and $ B(x_{0},r_{0})\subset \YX$. 
By the assumption $\mu_1\not\equiv0$, the set $B^{c}=X\setminus B$ satisfies
$ \Ld (B^{c})>0$,  and therefore there exist  $x_{1}\in X$ and 
$r_1\in (0,\frac{\pi}{4})$ with $\mathcal{L}^{p}(B(x_{1},r_{1})\cap \B^{c})> 0$
and $B(x_{1},r_{1}) \subset \YX$.

 We set $r_\theta=(1{-}\theta)r_0 + \theta r_1$ and $x_\theta =
(1{-}\theta)x_0 + \theta x_1$ and observe that the convexity of $\YX$ implies 
$B(x_\theta,r_\theta) \subset \R^d$ for $\theta\in [0,1]$. As $\mu_1 $ is
absolutely continuous the functions
$\beta(\theta):=\calL^d(B(x_\theta,r_\theta) \cap B)$ and $\gamma
(\theta):=\calL^d(B(x_\theta,r_\theta) \cap B^c)$ are continuous and satisfy
$\beta(\theta)+ \gamma(\theta)= c_d r_\theta^d>0$ for all $\theta\in
[0,1]$. With $\beta(0)>0$ and $\gamma(1)>0$ we conclude that there exists
$\theta\in[0,1] $  such that $ \Ld (B(x_\theta,r_\theta)\cap B)> 0$ and 
$ \Ld (B(x_\theta,r_\theta)\cap \B^{c})> 0.$ For economy of notation,
we denote $\B_{0}=B(x_\theta,r_\theta)\cap B$ 
and $B_{1}=B(x_\theta,r_\theta)\cap B^{c}.$ 

From the assumption that $\mu_{0}\simeq \Ld ,$ we have
$\mu_{0}(B_{0})>0.$ Also by definition of $B^{c}$ and therefore of $\B_1,$ we
have $\mu_{1}(B_{1})>0.$ Furthermore it holds that
$\sup_{x\in B_{0},y\in B_{1}}|x{-}y| <2 r_\theta <\frac{\pi}{2}$ which
means that $\mu_{1}$ 
has positive value in a set that has distance less than $\pi/2$ from points in
$B_{0}.$ More specifically, it holds $B_{0}\subset\supp(\mu'_{0}),$ and
$B_{1}\subset\supp(\mu'_{1}),$ where $\mu'_{0}$ and $ \mu'_{1}$ are as in Definition
\ref{primes}.

However, the optimality conditions in \cite[Cor.\,3.5, (3.31)]{LiMiSa23FPGG}
provide $|\bfT(x){-}x|<\pi/2$ and
$\sigma_{0}(x)\sigma_{1}(\bfT(x)) = \cos^2(|x{-}\bfT(x)|)$ on
$B_{0}\subset\supp(\mu'_{0})$ which means that mass in $B_{0}$ could not be
destroyed, but instead it must be transferred from $B_{0}$ somewhere in
$B^{c}.$ This implies that $\bfT(x)\in \B^{c}$ for all $x\in B_{0}.$ However in
Step A1, we proved $\rho_{1}(\bfT(x))\leq\rho_{1}(x)$ almost surely, therefore
$$0<\rho_{1}(\bfT(x))\leq\rho_{1}(x)=0\  \text{ for almost every}\hspace{8pt} x\in B_{0}.$$ This
gives us
\[
0<\int_{\B_{0}}\rho_{1}(\bfT(x)) \Ld( \d x) \leq \int_{\B_{0}}\rho_{1}(x)
  \Ld(\d x)  = \mu_{1}(\B_{0})=0
\]
which is a contradiction.\medskip

\underline{\em The $\SHK$ case:} The proof for $\SHK$ is exactly the same, because
the constructed counterexample has the same mass as the original and
because the distance $\SHK$ is a strictly monotone function of the $\HK$ distance, namely 
$ \SHK(\mu_0,\mu_1)=2\arcsin\big(\frac12\,\HK(\mu_0, \mu_1) \big).$ 
\end{proof}

\subsection{A single minimization step for $\HK$}
\label{su:SingleStepHK}

We proceed with proving upper and lower bounds for the density $\rho_{1}$ of
the measure $\mu_{1}$ defined by \eqref{schemerepeat}$_\HK,$ given that
$\mu_{0}$ satisfies some density bounds of its own,
i.e. $c_{\min}<\rho_{0}<c_{\max}$. The upper bound we retrieve is the same as
in (D3), and the proof is relatively straightforward. We assume that the density
$\rho_{1}$ of the minimizer $\mu_{1}$ in \eqref{schemerepeat}$_\HK$ is bigger
than the expected density given by (D3) in a set of positive Lebesgue measure. By
applying Lemma \ref{uniformitypreference}, we infer excessive creation of mass
in some set $B$. We conclude that, what we ``gain'' in $\HK$ distance by
restricting the growth on $B$ is more than what we ``lose'' for the entropy
function. Therefore ending up with a measure that has total energy less than
$\mu_{1},$ which is a contradiction. A similar bound was retrieved in
\cite{DimChi20TGMH} for a class of Entropy functionals that were studied in
that paper.

\begin{proposition}[Upper bound for incremental densities  for $\HK$]
\label{pr:discr.max.princ.HK.upp}
Let $X \subset \R^d$ be a compact, convex set with nonempty
interior.  Furthermore consider $\E$ as in \eqref{EntropyFunctional} with
$E$ being convex. Let finally 
$\mu_{0} = \rho_0  \Ld $ with $\rho_{0}(x)\leq c_{\max}$ and
$\mu_{1} \in \calM(\YX)$ as in \eqref{schemerepeat}$_\HK$. Then, for all
$c_\mafo{upp} \geq 0$ and $\tau>0$ with $E'(c_\mafo{upp}) \tau >-1/2$ we have
\begin{equation}
  \label{eq:DensUppBd.HK}
\rho_{1}(x)\leq \max\Big\{ c_\mafo{upp} , 
 \frac{c_{\max}}{\left(1+ 2\tau \min\{ E'(c_\mafo{upp}) ,0\}\right)^2}\Big\}.
 \end{equation}
\end{proposition}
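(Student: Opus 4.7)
The plan is to argue by contradiction, mirroring Lemma~\ref{uniformitypreference} and the scalar observation (D3) of Section~\ref{su:MotivBounds}. Set $\mu_\mathrm{u} := \min\{E'(c_\mathrm{upp}),\,0\}$ and $c^{\ast} := \max\{c_\mathrm{upp},\,c_{\max}/(1{+}2\tau\mu_\mathrm{u})^{2}\}$, and suppose for contradiction that the super-level set $B := \{y \in X : \rho_1(y) > c^{\ast}\}$ has positive Lebesgue measure. By Lemma~\ref{uniformitypreference} we already know that $\mu_1 = \rho_1 \mathcal{L}^d$ and that there exists an injective optimal dilation-transportation couple $(q,\bfT)$ from $\mu_0$ to $\mu_1$. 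The density monotonicity $\rho_1(\bfT(x)) \leq \rho_1(x)$ from that lemma implies that $\{\rho_1 > c^{\ast}\}$ is $\bfT$-invariant, so $A := \bfT^{-1}(B) \subseteq B$ has positive $\mu_0$-measure.

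The competitor I would use is $\mu_1^{t} := \bfT_{\#}((q^{t})^{2}\mu_0)$, where $q^{t} := q$ on $A^{c}$ and $(q^{t})^{2} := \bigl(1 - t(1 - c^{\ast}/(\rho_1\circ\bfT))\bigr)\,q^{2}$ on $A$. For $t \in [0,1]$ this linearly interpolates between $\mu_1$ at $t=0$ and, at $t=1$, the measure obtained from $\mu_1$ by capping its density on $B$ at the level $c^{\ast}$. Lemma~\ref{split} localizes the cost and identifies $(q^{t},\bfT)$ as a dilation-transportation couple for $(\mu_0,\mu_1^{t})$, giving the upper bound
\[
\HK^{2}(\mu_0,\mu_1^{t}) - \HK^{2}(\mu_0,\mu_1) \leq \int_{A}\bigl((q^{t})^{2} - q^{2} - 2(q^{t}-q)\cos(|x-\bfT(x)|)\bigr)\,\d\mu_0.
\]
I would then bound $\E(\mu_1^{t}) - \E(\mu_1)$ from above using the convexity of $E$ together with $\rho_1^{t} \le \rho_1$ on $B$. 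Combining these two estimates with the push-forward change of variables $\int_{A}f(\bfT)q^{2}\d\mu_0 = \int_{B}f\rho_1\d\mathcal{L}^{d}$, the bound $\rho_0 \leq c_{\max}$, and the hypothesis $\rho_1 > c_{\max}/(1{+}2\tau\mu_\mathrm{u})^{2}$ on $B$ (which in particular gives $E'(\rho_1) \geq \mu_\mathrm{u}$), I would produce a strict decrease of the scheme functional for some $t \in (0,1]$, contradicting the minimality of $\mu_1$.

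The main obstacle will be that first-order variations of the dilation on $A$ cancel identically, by virtue of the pointwise Euler--Lagrange relation $q(1{+}2\tau E'(\rho_1(\bfT(x)))) = \cos(|x-\bfT(x)|)$ implicit in the minimality of $\mu_1$. As in the scalar case (D3), the contradiction must therefore be extracted from a quantitative, non-infinitesimal use of the strict density gap $\rho_1 > c_{\max}/(1{+}2\tau\mu_\mathrm{u})^{2}$. Equivalently, one can work directly with the pointwise identity $\rho_1(\bfT(x))\,J_{\bfT}(x)\,(1{+}2\tau E'(\rho_1(\bfT(x))))^{2} = \cos^{2}(|x-\bfT(x)|)\rho_0(x) \leq c_{\max}$ obtained by combining the EL relation with the push-forward formula, and use the monotonicity $\rho_1(\bfT(x)) \leq \rho_1(x)$ from Lemma~\ref{uniformitypreference} to handle the transport Jacobian $J_{\bfT}$; justifying this last step rigorously is where I expect the delicate work to lie.
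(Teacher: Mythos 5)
Your overall strategy coincides with the paper's: argue by contradiction on the super-level set $\B=\{\rho_1> c^*\}$, use Lemma \ref{uniformitypreference} to get $\bfT^{-1}(\B)\subset\B$, build a competitor that reduces the density on $\bfT(\A)$, and play the gain in $\frac1{2\tau}\HK^2$ against the loss in $\E$. But there is a genuine gap at the crux of the argument: you never establish a pointwise lower bound on the dilation $q$ on a set of positive measure, and without it the sign of the $\HK$-difference cannot be controlled. If you decrease $q$ to $q^t<q$ on $\A$, the cost changes by $\int_\A\bigl((q^t)^2-q^2-2(q^t{-}q)\cos(|x{-}\bfT(x)|)\bigr)\d\mu_0 = \int_\A (q{-}q^t)\bigl(2\cos(|x{-}\bfT(x)|)-q-q^t\bigr)\d\mu_0$, which is $\le 0$ only where $q+q^t\ge 2\cos(|x{-}\bfT(x)|)$ — essentially where $q\ge 1$. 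Your hypothesis is a bound on $\rho_1$, not on $q$, and converting one into the other pointwise requires exactly the Jacobian identity / Euler--Lagrange relation that you invoke but (as you admit) cannot justify; no such Monge--Amp\`ere-type relation is available for the merely essentially injective measurable map $\bfT$ of Theorem \ref{injective}. The paper circumvents this entirely by an averaging argument: setting $k_\epsilon = (1+2\tau\min\{E'(c_{\mathrm{upp}}),0\})^{-1}+\epsilon$ and $\xi_\epsilon=\max\{(1{+}\epsilon)^2c_{\mathrm{upp}},k_\epsilon^2c_{\max}\}$, the inclusion $\bfT^{-1}(\B)\subset\B$ gives
\[
\int_{\bfT^{-1}(\B)}q^2\,\d\mu_0=\mu_1(\B)\ \ge\ \int_{\bfT^{-1}(\B)}\rho_1\,\d\mathcal{L}^d\ \ge\ k_\epsilon^2 c_{\max}\,\mathcal{L}^d(\bfT^{-1}(\B))\ \ge\ k_\epsilon^2\,\mu_0(\bfT^{-1}(\B)),
\]
which forces $q^2\ge k_\epsilon^2$ on some $\A\subset\bfT^{-1}(\B)$ of positive $\mu_0$-measure. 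This purely measure-theoretic step is what your proposal is missing.

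Your second concern — that "first-order variations cancel identically" so a non-infinitesimal argument is needed — is also misplaced once the localization above is in hand. The paper uses the constant rescaling $\mu_1^t=\mu_1+(t^2{-}1)\mu_1^*$ (not your non-constant capping factor), obtains via Lemma \ref{split} and the couple $(tq,\bfT)$ the lower bound $\frac{1-t}{2\tau}(1+t-\frac{2}{k_\epsilon})\mu_1^*(\YX)$ for the $\HK$-gain and $(1-t^2)E'(c_{\mathrm{upp}})\mu_1^*(\YX)$ for the entropy loss, and the resulting $\ol\eta(t)$ satisfies $\ol\eta(1)=0$ with $\ol\eta'(1)\le -\frac1\tau\frac{\epsilon}{(k_\epsilon-\epsilon)k_\epsilon}<0$. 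The $\epsilon$-slack built into $k_\epsilon$ makes the first-order term strictly negative, so taking $t$ close to $1$ suffices; one then lets $\epsilon\downarrow 0$ at the very end. So the approach is salvageable, but only after inserting the averaging step that produces the set $\A$ with $q\ge k_\epsilon>1$, and replacing the unproven pointwise identities by the variational comparison on that set.
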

\begin{proof}
We start by setting 
\[
k_{\epsilon}:=\frac{1}{1+2\tau \,\min\{E'(c_\mafo{upp}) 
    ,0\}}\:+\epsilon \geq 1+\epsilon.
\]

\underline{\em Step 1: Construction of sets.} 
In order to arrive a a contradiction we define the set
\[
\B:= \bigset{x \in X}{ \rho_1(x) \geq \xi_\epsilon } \quad \text{with
}\xi_\epsilon:=\,\max\{ (1+\epsilon)^{2}c_\mafo{upp},
   k_\epsilon^2c_\mafo{max}\}
\]
and assume $\mu_{1}(\B)>0$. By Lemma \ref{uniformitypreference}, for
almost every $x\in \mathbf{\bfT^{-1}}(\B)$, we have 
\[
\rho_{1}(\bfT ^{-1}(x))\geq \rho_{1}(x)\geq \xi_\epsilon, 
\] 
which yields  $x\in \B$, and we conclude $\bfT^{-1}(\B) \subset \B$. By using this we find
\begin{equation*}
\begin{split}
&\mu_1(\B)
=\int_{\B}\rho_{1}(x) \Ld (\d x)\geq\int_{\bfT ^{-1}(\B)}\rho_{1}(x) \Ld (\d x)
\\
& \geq 
\int_{\bfT ^{-1}(\B)}k^{2}_{\epsilon}c_{\max} \Ld (\d x)\geq\int_{\bfT ^{-1}(\B)}k^{2}_{\epsilon}\rho_{0}(x) \Ld (\d x)
=  k^{2}_{\epsilon}\mu_{0}(\bfT ^{-1}(\B)),
\end{split}
\end{equation*}
where in the last estimates we applied both
$\rho_1(x)\geq k^{2}_\epsilon c_{\max}$ and $c_{\max} \geq \rho_{0}(x)$.  By
combining the estimate above with the definition of the injective optimal
transport couple we have,
\[
k^2_{\epsilon} \mu_{0}(\bfT ^{-1}(\B))\leq\mu_{1}(B) 
=\int_{\bfT ^{-1}(\B)}q^{2}\mu_{0}(\d x), 
\]
which leads to the existence of a set $\A\subset\bfT ^{-1}(\B),$ for which
$\mu_{0}(A)>0$ and $q^{2}(x)\geq k^{2}_{\epsilon}$ for a.a.\ $ x\in\A$.  

\underline{Step 2: Comparison of $\HK$ distances.} 
We denote by $\mu^{\A}_{0}$ the restriction of $\mu_{0}$ on $\A,$ and by
$\mu^{\bfT(\A)}_{1}$ the restriction of $\mu_{1}$ on $\bfT (\A)$. Since
$(\bfT ,tq)$ is a \TRAGRO\ couple for
$\mu^{\A}_{0},t^{2}\mu^{\bfT(\A)}_{1}$ for $t\in[0,1]$, we get 
\begin{equation}
\HK^{2}\left(\mu^{\A}_{0},t^{2}\mu^{\bfT(\A)}_{1}\right)\leq \int_{\A}\left( 1 +
  (tq(x))^{2} -2tq(x)\cos\left( |\bfT (x){-} x|)\right) \right)\mu^{\A}_{0}(\d x), 
\end{equation}
from which we obtain 
\begin{equation}
  \label{estest}
\begin{split}
&\HK^{2}\left(\mu^{\A}_{0},\mu^{\bfT(\A)}_{1}\right) -	\HK
^{2}\left(\mu^{\A}_{0},t^{2}\mu^{\bfT(\A)}_{1}\right) \geq \int_{\YX}\!\!\!\left( 1 +
  (q(x))^{2} -2q(x)\cos\left( |\bfT (x){-} x|\right)\right)\mu^{\A}_{0}(\d x)
\\ 
& \hspace*{0.33\textwidth}
- \int_{\YX}\!\!\left( 1 + (tq(x))^{2} -2tq(x)\cos\left( |\bfT (x){-} x| 
    \right)\right)\mu^{\A}_{0}(\d x)
\\&
=\int_{\YX} (1{-}t^{2})q^{2}(x)-2(1{-}t)q(x)\cos\left( |\bfT (x){-} x|\right)
\mu^{\A}_{0}(\d x) 
\\&
\geq\int_{\YX} \!\left[(1{-}t^{2})q^{2}(x)-\frac{2(1{-}t)}{q(x)}q^{2}(x) 
\!\right]\mu^{\A}_{0}(\d x)\geq (1{-}t)\left(1+t-\frac{2}{k_{\epsilon}}\right)\int_{\YX}q^{2}(x)\mu^{\A}_{0}(\d x)\\&=
(1{-}t)\left(1+t-\frac{2}{k_{\epsilon}}\right)\mu^{\bfT(\A)}_1(\YX),
\end{split}
\end{equation}  
where in the last estimate, we applied $q(x)\geq k_{\epsilon}.$
Now, for $0\leq t \leq 1,$ we define the measure
\[
\mu^{t}_{1}:=\mu_{1}-\mu^{\bfT(\A)}_{1}+ t^{2}\mu^{\bfT(\A)}_{1}=\mu_{1} +
(t^{2}{-}1)\mu^{\bfT(\A)}_{1} = \left( \bm1_{X\setminus \bfT(A)} + t^2
\bm1_{\bfT(\A)}\right) \rho_1  \Ld  . 
\]
Applying Lemma \ref{split} for $\A$ and $\YX\setminus \A,$ we get
\begin{equation}
\begin{split}
&\HK^{2}(\mu_{0},\mu_{1})=\HK^{2}(\mu_{0}-\mu^{\A}_{0},\mu_{1}-\mu^{\bfT(\A)}_{1}) +   \HK^{2}\left(\mu^{\A}_{0},\mu^{\bfT(\A)}_{1}\right) \\&\geq \HK^{2}(\mu_{0}-\mu^{\A}_{0},\mu_{1}-\mu^{\bfT(\A)}_{1}) +  \HK^{2}\left(\mu^{\A}_{0},t^{2}\mu^{\bfT(\A)}_{1}\right) +  \HK^{2}\left(\mu^{\A}_{0},\mu^{\bfT(\A)}_{1}\right)- \HK^{2}\left(\mu^{\A}_{0},t^{2}\mu^{\bfT(\A)}_{1}\right)\\&\geq \HK^{2}(\mu_{0}, \mu^{t}_{1})+(1{-}t)\left(1+t-\frac{2}{k_{\epsilon}}\right)\mu^{\bfT(\A)}_1(\YX),
\end{split}
\end{equation} 
where the last inequality is a result of the sub-additivity of the squared distance
and \eqref{estest}. So, for the new measure $\mu^{t}_{1}=\mu_{1} +
(t^{2}{-}1)\mu^{\bfT(\A)}_{1}$, we have  
\begin{equation}
\label{eq:HK.lowerEst}
\frac1{2\tau}\left(\HK^{2}(\mu_{0},\mu_{1})-\HK^{2}(\mu_{0},\mu^{t}_{1})\right)
\geq
\frac{1{-}t}{2\tau} \left(1+t-\frac{2}{k_{\epsilon}}\right)\mu^{\bfT(\A)}_1(\YX) . 
\end{equation}
For later use we recall that
\begin{equation}
  \label{eq:mu1*(\YX)}
  \mu_1^{\bfT(\A)}(\YX)= \int_X q^2 \mu_0^{\A}(\d x) =\int_A q^2 \mu_0(\d x) \geq \int_A
  k_\epsilon^2 \mu_0(\d x) \geq (1{+}\epsilon)^2 \mu(\A)>0. 
\end{equation}

\underline{Step 3: Comparison of entropies.} 
On $\bfT(\A)$ we have $\rho_1^t= t^2 \rho_1 \geq t^2
\xi_\epsilon$, and the convexity of $\E$ gives 
\begin{equation*}
\begin{split}
& \E(\mu_{1})-\E(\mu^{t}_{1})=\int_{\bfT (\A)} 
 \left(E(\rho_{1}(x))-E(\rho^{t}_{1}(x))\right) \Ld (\d x) 
\\&\geq\int_{\bfT (\A)}E'(\rho_1^t) 
(\rho_{1}(x)-\rho^{t}_{1}(x)) \Ld (\d x) \geq E'(t^2 \xi_\epsilon) \int_{\bfT (\A)} 
 (\rho_{1}{-}\rho^{t}_{1}) \Ld (\d x). 
\end{split}
\end{equation*}
For all
$t\in\left((1{+}\epsilon)^{-1/2},1\right)$  we have $t^{2}\xi_{\epsilon}\geq c_\mafo{upp}$,
and therefore $E'(t^2\xi_\epsilon)\geq E'(c_\mafo{upp})$ by the monotonicity of
$E'$. With this we arrive at the lower bound 
\begin{equation}
\label{eq:Ent.lowerEst}
\begin{split}
\E(\mu_{1})-\E(\mu^{t}_{1})&\geq E'(c_\mafo{upp}) 
\int_{\bfT (A)}(\rho_{1}(x)-\rho^{t}_{1}(x)) \Ld (\d x)
\\&
=E'(c_\mafo{upp}) \int_{\bfT (\A)}(1{-}t^{2}) \rho_{1}(x) \Ld (\d x)
=(1{-}t^{2})E'(c_\mafo{upp}) \mu^{\bfT(\A)}_{1}(\YX).
\end{split}
\end{equation}

\underline{Step 4: Minimization provides contradiction.} Because $\mu_1$
is a minimizer we reach a contradiction if we find a $t\in (0,1)$ such that
\[
\eta(t):= \frac1{2\tau } \HK^2(\mu_0,\mu_1) +\E(\mu_1) - \left( \frac1{2\tau }
\HK^2(\mu_0,\mu_1^t) + \E(\mu_1^t)\right) >0.  
\]
Combining the estimates \eqref{eq:HK.lowerEst} and \eqref{eq:Ent.lowerEst} we
find, for $t\in\left((1{+}\epsilon)^{-1/2},1\right)$, the lower estimate
\[
\eta(t)\geq \ol\eta(t)\mu_1^*(\YX) \quad \text{with } 
\ol\eta(t):= \frac{1{-}t}{2\tau } \left(1+t-\frac{2}{k_{\epsilon}}\right) +
(1{-}t^{2}) E'(c_\mafo{upp}). 
\] 
Clearly, we have $\ol\eta(1)=0$ and find 
\[
\ol\eta{}'(1)= -  \frac1\tau  \left( 1 +  2\tau  E'(c_\mafo{upp})
-\frac1{k_\epsilon}\right)  
\leq - \frac1\tau \left( \frac1{k_\epsilon{-}\epsilon} -\frac1{k_\epsilon}\right)
= - \frac1\tau \:\frac\eps{(k_\epsilon{-}\epsilon)k_\epsilon}  <0. 
\]
Recalling $\mu_1^{\bfT(\A)}(\YX)>0$ from \eqref{eq:mu1*(\YX)} we obtain $\eta(t)>0$ for all
$t<1$ that are sufficiently close to $t=1$. Thus, the assumption $\mu(\B)>0$
must have been false, and we conclude $\rho_1(x) \leq \xi_\epsilon$ a.e.\ in
$X$. As $\epsilon >0$ was arbitrary, the assertion is established.  
\end{proof}

{ We proceed with the lower bound.

\begin{proposition}[Lower bound for incremental densities]
\label{pr:discr.max.princ.HK.low}
Let $X \subset \R^d$ be a compact, convex set with nonempty
interior. Furthermore let $\E$ be as in
\eqref{EntropyFunctional} with convex $E.$   Let finally $\mu_{0}\simeq \Ld $ with $\rho_{0}(x)\geq c_\mathrm{min},$ and $\mu_{1}\in \calM(\YX)$ as in
\eqref{schemerepeat}$_\HK$. Is it true that 
\begin{equation}
    \rho_1(x) \geq \min \Big\{ c_\mathrm{low},\:\frac{c_\mathrm{min}}{(1 {+} 2\tau \max\{
  E'(c_\mathrm{low}),0\})^2} \Big\} \quad \text{for all } c_\mathrm{low} \geq 0
\end{equation}
\end{proposition}
\begin{proof}
We start by setting 
\[
k_{\epsilon}:=\frac{1}{1+2\tau \,\max\{E'(c_\mafo{low}) 
    ,0\}}\:-\epsilon \leq 1-\epsilon.
\]

\underline{\em Step 1: Construction of sets.} 
In order to arrive a a contradiction we define the set
\[
\B:= \bigset{x \in X}{ \rho_1(x) \leq \xi_\epsilon } \quad \text{with
}\xi_\epsilon:=\,\min\{ (1-\epsilon)^{2}c_\mafo{low},
   k_\epsilon^2c_\mafo{min}\}
\]
and assume $\mu_{1}(\B)>0$. By Lemma \ref{uniformitypreference}, for
almost every $x\in \B$, we have 
\[
\rho_{1}(\bfT(x))\leq \rho_{1}(x)\leq \xi_\epsilon, 
\] 
which yields  $\bfT(x)\in \B$, and we conclude $\bfT(\B) \subset \B$. By using this we find
\begin{equation*}
\begin{split}
&\mu_1(\bfT(\B))
=\int_{\bfT(\B)}\rho_{1}(x) \Ld (\d x)\leq\int_{\B}\rho_{1}(x) \Ld (\d x)
\\
& \leq 
\int_{\B}k^{2}_{\epsilon}c_{\min} \Ld (\d x)\leq\int_{\B}k^{2}_{\epsilon}\rho_{0}(x) \Ld (\d x)
=  k^{2}_{\epsilon}\mu_{0}(\B),
\end{split}
\end{equation*}
where in the last estimates we applied both
$\rho_1(x)\leq k^{2}_\epsilon c_{\min}$ and $c_{\min} \leq \rho_{0}(x)$.  By
combining the estimate above with the definition of the injective optimal
transport couple we have,
\[
k^2_{\epsilon} \mu_{0}(\B)\leq\mu_{1}(\bfT(\B)) 
=\int_{\B}q^{2}\mu_{0}(\d x), 
\]
which leads to the existence of a set $\A\subset\B,$ for which
$\mu_{0}(A)>0$ and $q^{2}(x)\leq k^{2}_{\epsilon}$ for a.e.\ $ x\in\A$.  For this $\A,$ without loss of generality we can even assume that $\rho_0(x)<c_{\max},$ for some $c_{max}>0.$

\underline{Step 2: Comparison of $\HK$ distances.} 
We denote by $\mu^{\A}_{0}$ the restriction of $\mu_{0}$ on $\A,$ and by
$\mu^{\bfT(\A)}_{1}$ the restriction of $\mu_{1}$ on $\bfT (\A)$. Since
$(\bfT ,q/t)$ is a \TRAGRO\ couple for
$\mu^{\A}_{0},(1/t^{2})\mu^{\bfT(\A)}_{1}$ for $t\in[0,1]$, we get 
\begin{equation}
\HK^{2}\left(\mu^{\A}_{0},\frac{1}{t^{2}}\mu^{\bfT(\A)}_{1}\right)\leq \int_{\A}\left( 1 +
  \left(\frac{q(x)}{t}\right)^{2} -2\frac{q(x)}{t}\cos\left( |\bfT (x){-} x|)\right) \right)\mu^{\A}_{0}(\d x), 
\end{equation}
from which we obtain 
\begin{equation}
  \label{estest2}
\begin{split}
&\HK^{2}\left(\mu^{\A}_{0},\mu^{\bfT(\A)}_{1}\right) -	\HK
^{2}\left(t^{2}\mu^{\A}_{0},\mu^{\bfT(\A)}_{1}\right) \geq 
\HK^{2}\left(\mu^{\A}_{0},\mu^{\bfT(\A)}_{1}\right) -	t^{2}\HK
^{2}\left(\mu^{\A}_{0},\frac{\mu^{\bfT(\A)}_{1}}{t^{2}}\right)\geq \\
&\int_{\YX}\left( 1 +
  (q(x))^{2} -2q(x)\cos\left( |\bfT (x){-} x|\right)\right)\mu^{\A}_{0}(\d x)
\\ 
& \hspace*{0.33\textwidth}
- \int_{\YX}\left( t^{2} + q(x)^{2} -2tq(x)\cos\left( |\bfT (x){-} x| 
    \right)\right)\mu^{\A}_{0}(\d x)
\\&
=\int_{\YX} (1{-}t^{2})-2(1{-}t)q(x)\cos\left( |\bfT (x){-} x|\right)
\mu^{\A}_{0}(\d x) 
\\&
\geq\int_{\YX} \!\left[(1{-}t^{2})-2(1{-}t)q(x) 
\!\right]\mu^{\A}_{0}(\d x)\geq (1{-}t)\left(1+t-2k_{\epsilon}\right)\int_{\YX}\mu^{\A}_{0}(\d x)\\&=
(1{-}t)\left(1+t-2k_{\epsilon}\right)\mu^{\A}_0(\YX),
\end{split}
\end{equation}  
where in the last estimate, we applied $q(x)\leq k_{\epsilon}.$
Now, for $0\leq t \leq 1,$ we define the measure
\[
\mu^{t}_{1}:=\mu_{1}+(1 {-} t^{2})\mu^{\A}_{0}. 
\]
Applying Lemma \ref{split} for $A$ and $\YX\setminus A,$ we get
\begin{equation}
\begin{split}
&\HK^{2}(\mu_{0},\mu_{1})=\HK^{2}(\mu_{0}-\mu^{\A}_{0},\mu_{1}-\mu^{\bfT(\A)}_{1})
+   \HK^{2}\left(\mu^{\A}_{0},\mu^{\bfT(\A)}_{1}\right) \\
&=
\HK^{2}(\mu_{0}-\mu^{\A}_{0},\mu_{1}-\mu^{\bfT(\A)}_{1}) +
\HK^{2}\left(\mu^{\A}_{0},(1{-}t^2)\mu^{\A}_{0}+\mu^{\bfT(\A)}_{1}\right) \\
&+
\HK^{2}\left(\mu^{\A}_{0},\mu^{\bfT(\A)}_{1}\right)-
\HK^{2}\left((1-t^2)\mu^{\A}_{0} 
+t^{2}\mu^{\A}_{0},(1{-}t^2)\mu^{\A}_{0}+\mu^{\bfT(\A)}_{1}\right)
\\
&\geq \HK^{2}(\mu_{0}, \mu^{t}_{1}) 
 +\HK^{2}\left(\mu^{\A}_{0},\mu^{\bfT(\A)}_{1}\right)
 -\HK^{2}\left((1-t^2)\mu^{\A}_{0},(1{-}t^2)\mu^{\A}_{0}\right)
-\HK^{2}\left(t^2\mu^{\A}_{0},\mu^{\bfT(\A)}_{1}\right)\\
&\geq \HK^{2}(\mu_{0}, \mu^{t}_{1})+(1{-}t)\left(1+t-2k_{\epsilon}\right)\mu^{\A}_0(\YX),
\end{split}
\end{equation} 
where the last inequality is a result of the sub-additivity of the squared distance
and \eqref{estest2}. So, for the new measure, we have  
\begin{equation}
\label{eq:HK.lowerEst2}
\frac1{2\tau}\left(\HK^{2}(\mu_{0},\mu_{1})-\HK^{2}(\mu_{0},\mu^{t}_{1})\right)
\geq
\frac{1{-}t}{2\tau} \left(1+t-2k_{\epsilon}\right)\mu^{\A}_0(\YX) . 
\end{equation}

\underline{Step 3: Comparison of entropies.} 
On $\A$ we have $\rho_1^t= \rho_1 + (1{-}t^2)\rho_{0}(x)\leq \xi_\epsilon+(1{-}t^2)c_{\max}$, and the convexity of $\E$ gives 
\begin{equation*}
\begin{split}
& \E(\mu^{t}_{1})-\E(\mu_{1})=\int_{\A} 
 \left(E(\rho^{t}_{1}(x))-E(\rho_{1}(x))\right) \Ld (\d x) 
\\&\leq\int_{\A}E'(\rho_1^t) 
(\rho^{t}_{1}(x)-\rho_{1}(x)) \Ld (\d x) \leq E'( \xi_\epsilon+(1{-}t^2)c_{\max})\int_{\A} 
 (\rho^{t}_{1}-\rho_{1}) \Ld (\d x). 
\end{split}
\end{equation*}
For all
$t\in\left((1{+}\epsilon/c_{\max})^{-1/2},1\right)$  we have $\xi_{\epsilon} +(1{-}t^2)c_{\max} \leq c_\mafo{low}$,
and therefore $E'(\xi_{\epsilon} +(1{-}t^2)c_{\max})\leq E'(c_\mafo{low})$ by the monotonicity of
$E'$. With this we arrive at the lower bound 
\begin{equation}
\label{eq:Ent.lowerEst2}
\begin{split}
\E(\mu^{t}_{1})-\E(\mu_{1})&\leq E'(c_\mafo{low}) 
\int_{\A}(\rho^{t}_{1}(x)-\rho_{1}(x)) \Ld (\d x)
\\&
=E'(c_\mafo{low}) \int_{\A}(1{-}t^{2}) \rho_{0}(x) \Ld (\d x)
=(1{-}t^{2})E'(c_\mafo{low}) \mu^{\A}_{0}(\YX).
\end{split}
\end{equation}

\underline{Step 4: Minimization provides contradiction.} Because $\mu_1$
is a minimizer we reach a contradiction if we find a $t\in (0,1)$ such that
\[
\eta(t):= \frac1{2\tau } \HK^2(\mu_0,\mu_1) +\E(\mu_1) - \left( \frac1{2\tau }
\HK^2(\mu_0,\mu_1^t) + \E(\mu_1^t)\right) >0.  
\]
Combining the estimates \eqref{eq:HK.lowerEst} and \eqref{eq:Ent.lowerEst} we
find, for $t\in\left((1{+}\epsilon/c_{\max})^{-1/2},1\right)$, the lower estimate
\[
\eta(t)\geq \ol\eta(t)\mu_0^{\A}(\YX) \quad \text{with } 
\ol\eta(t):= \frac{1{-}t}{2\tau } \left(1+t-2k_{\epsilon}\right) -
(1{-}t^{2}) E'(c_\mafo{low}). 
\] 
Clearly, we have $\ol\eta(1)=0$ and find 
\[
\ol\eta{}'(1)=  \frac1\tau  \big( k_\epsilon-\left(1 {-}  2\tau
  E'(c_\mafo{low})\right)\big) \leq  \frac1\tau \left(k_{\epsilon}-\frac{1}{1+2\tau E'(c_{\mafo{low}})}\right)  <0. 
\]
Recalling $\mu_0^{\A}(\YX)>0$ from \eqref{eq:mu1*(\YX)} we obtain $\eta(t)>0$ for all
$t<1$ that are sufficiently close to $t=1$. Thus, the assumption $\mu(\B)>0$
must have been false, and we conclude $\rho_1(x) \geq \xi_\epsilon$ a.e.\ in
$X$. As $\epsilon >0$ was arbitrary, the assertion is established.  
\end{proof}}

\subsection{A single minimization step for $\SHK$}
\label{su:SingleStepSHK}

We will proceed with the theorem for the spherical Hellinger Kantorovich
describing the propagation of density bounds for the incremental 
minimization scheme for the gradient system $(\calP(\YX),\E,\SHK)$. 
 
The main argument for the spherical Hellinger Kantorovich goes as follows. If
the assumption for either bounds of $\rho_{1}$ is violated, then the fact that
the mass remains constant, guarantees the existence of two sets $\A,\B$ of
positive measure that will lead to a contradiction. More specifically, for mass
leaving $\A$, we have growth (i.e. $q>1$), with a resulting density at the
target bigger than some constant $c.$ At the same time for $\B,$ we have that
the final density is strictly smaller than $c$ and part of the mass that left
$\B$ was destroyed, (i.e. $q<1$).  One can show that it is cheaper to reduce
the growth of the mass leaving $\A$ and going $\bfT(\A)$ resulting in less
density in $\bfT(\A)$, where at the same time for the mass leaving $\B$ we
retain a portion at place instead of destroying it during the transportation,
which again leads to a cheaper cost. This way we can construct a new measure
that contradicts the optimality of $\mu_{1}.$

\begin{proposition}[Density bounds for $\SHK$]
\label{discrete maximal principle SHK}
Let $X \subset \R^d$ be a compact, convex set with nonempty
interior. Furthermore let $\E$ be as in \eqref{EntropyFunctional} with convex
$E.$ Let finally $\mu_{0}=\rho \Ld $ with
$c_{\min}\leq\rho_{0}(x)\leq c_{\max},$ and $\mu_{1}\in \calP(\YX)$ as in
\eqref{schemerepeat}$_\SHK$. Then, we have the density bounds
\begin{equation}
\label{eq:UppLowSHK}
c_{\min} \leq\rho_{1}(x)\leq c_{\max}\hspace{8pt}  
\text{almost everywhere in } X.
\end{equation}
\end{proposition}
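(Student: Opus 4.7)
The plan is a proof by contradiction, closely following the heuristic outlined just before the proposition. The first observation is a reduction: on $\calP(X)$ the distance $\SHK$ is a strictly increasing function of $\HK$ (see Subsection~\ref{su:SpherHK}) and $\SHK \geq \HK$, so comparing $\SHK$-costs of any two probability measures is equivalent to comparing their $\HK$-costs. Consequently, if a probability competitor $\widetilde\mu_1$ has strictly smaller $\HK$-cost than $\mu_1$, it also has strictly smaller $\SHK$-cost, and all the $\HK$ machinery developed so far (Lemma~\ref{split}, the scaling identity \eqref{eq:ScalHK}, and the density-propagation Lemma~\ref{uniformitypreference}) becomes available. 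I fix an injective optimal dilation–transportation couple $(q,\bfT)$ from $\mu_0$ to $\mu_1$ via Theorem~\ref{injective}. The essential new ingredient compared to the $\HK$ case is that here $\mu_0, \mu_1 \in \calP(X)$ enforce $\int_X (q^2{-}1)\,\d\mu_0 = 0$, automatically coupling creation and destruction; this is what will replace the sign assumption~\ref{assumHK} used in the $\HK$ proofs.

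For the lower bound, fix $\epsilon > 0$ and let $B := \{x : \rho_1(x) \leq c_{\min}/(1{+}\epsilon)\}$; suppose $\mathcal{L}^d(B) > 0$. Lemma~\ref{uniformitypreference} gives $\bfT(B) \subset B$ up to null sets, so by injectivity
\[
\int_B q^2\,\d\mu_0 \,=\, \mu_1(\bfT(B)) \,\leq\, \mu_1(B) \,\leq\, \frac{c_{\min}}{1{+}\epsilon}\,\mathcal{L}^d(B) \,\leq\, \frac{\mu_0(B)}{1{+}\epsilon},
\]
which produces a subset $A^\flat \subset B$ of positive $\mu_0$-mass on which $q^2 \leq (1{+}\epsilon)^{-1}$ (destruction). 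The mass conservation identity then forces a set $A^\sharp$ of positive $\mu_0$-measure on which $q^2 \geq 1{+}\epsilon'$ (creation); after refining, I may assume $A^\sharp \subset X \setminus B$ and $\rho_1 \geq c_{\min}$ on $\bfT(A^\sharp)$, giving density slack on the creation side. The competitor $\widetilde\mu_1^t \in \calP(X)$ is built by coupling two small perturbations: on $A^\sharp$, replace $q$ by $s(t)\,q$ with $s(t) < 1$, reducing the mass delivered to $\bfT(A^\sharp)$; on $A^\flat$, retain a $t$-fraction of the would-be-destroyed mass in place, adding $t\,\mu_0|_{A^\flat}$ to the measure. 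The two small parameters are tuned so that the mass removed at $\bfT(A^\sharp)$ equals the mass retained on $A^\flat$, which preserves $\widetilde\mu_1^t(X) = 1$. Applying Lemma~\ref{split} on the partition $\{A^\flat,\, A^\sharp,\, X{\setminus}(A^\flat{\cup}A^\sharp)\}$ and the scaling identity \eqref{eq:ScalHK} on the two active pieces (exactly as in Propositions~\ref{pr:discr.max.princ.HK.upp} and~\ref{pr:DiscrMaxPrincLowBdd}), each contribution is strictly negative to first order in $t$, so $\HK^2(\mu_0,\widetilde\mu_1^t)$—hence also $\SHK^2(\mu_0,\widetilde\mu_1^t)$—drops linearly in $t$. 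Convexity of $E$ together with the pointwise bounds $\rho_1 \leq c_{\min}/(1{+}\epsilon)$ on $B$ and $\rho_1 \geq c_{\min}$ on $\bfT(A^\sharp)$ controls the first-order entropy variation and makes it favorable as well, so for small $t > 0$ the sum $\SHK^2(\mu_0,\widetilde\mu_1^t)/(2\tau) + \E(\widetilde\mu_1^t)$ is strictly smaller than $\SHK^2(\mu_0,\mu_1)/(2\tau) + \E(\mu_1)$, contradicting minimality. Letting $\epsilon \to 0$ yields $\rho_1 \geq c_{\min}$ a.e. The upper bound $\rho_1 \leq c_{\max}$ follows by the entirely symmetric argument with $B := \{\rho_1 \geq (1{+}\epsilon)c_{\max}\}$, using $\bfT^{-1}(B) \subset B$, the resulting creation on a subset of $B$, and a complementary destruction site found via mass conservation.

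The main obstacle is engineering this coupled perturbation so that both the $\HK^2$-cost variation and the entropy variation are strictly negative to first order in $t$, \emph{simultaneously}, given that their rates are tied through the total-mass constraint. In the one-sided $\HK$ propositions a single parameter could be tuned freely, whereas here the scaling identity \eqref{eq:ScalHK} must be applied on two linked pieces, and the entropy drop from convexity of $E$ on one side must be shown not to be overwhelmed by the entropy cost on the other. Verifying the correct sign of the resulting total first-order derivative, uniformly in the refinement choice of $A^\flat$ and $A^\sharp$, is the delicate technical point and constitutes the heart of the argument.
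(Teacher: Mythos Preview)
Your overall strategy matches the paper's: build a mass-preserving competitor by simultaneously shrinking the delivered mass at a creation target $\bfT(A^\sharp)$ and retaining mass at a destruction source $A^\flat$, then compare via Lemma~\ref{split} and the scaling identity~\eqref{eq:ScalHK}. The $\HK^2$-drop half of the argument is fine and is essentially the paper's.

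The gap is the refinement step on the creation side. You obtain $A^\sharp$ with $q^2 \geq 1+\epsilon'$ from \emph{global} mass conservation, and then assert ``after refining, I may assume $A^\sharp \subset X \setminus B$ and $\rho_1 \geq c_{\min}$ on $\bfT(A^\sharp)$''. Neither claim follows. Nothing prevents all of $\{q^2>1\}$ from lying inside $B$, and even if $A^\sharp \subset X\setminus B$, Lemma~\ref{uniformitypreference} only gives $\rho_1(\bfT(x))\leq \rho_1(x)$, which goes the \emph{wrong} direction for a lower bound on $\rho_1$ at the target $\bfT(A^\sharp)$. Without that lower bound the entropy comparison breaks down: you could be moving mass from a region where $E'(\rho_1)$ is large to one where it is small, and convexity of $E$ alone yields nothing. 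This is precisely the ``delicate technical point'' you flag, but it sits at the refinement, not at the sign computation.

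The paper resolves this by partitioning $X$ according to $\rho_1\circ\bfT$ rather than $\rho_1$. With $A^{\tilde a}=\{x:\rho_1(\bfT(x))\leq(1{-}\tilde a)c_{\min}\}$ one applies Lemma~\ref{uniformitypreference} twice to get $\bfT(A^{\tilde a})\subset A^{\tilde a}$, hence $\mathcal L^d(\bfT(A^{\tilde a}))\leq \mathcal L^d(A^{\tilde a})$ and $\mu_1(\bfT(A^{\tilde a}))\leq(1{-}\tilde a)\,\mu_0(A^{\tilde a})$. Mass conservation then forces a strict surplus on the complement $\{x:\rho_1(\bfT(x))>(1{+}\tilde a)c_{\min}\}$, so one can extract $A^\sharp$ on which \emph{both} $q^2>1{+}a$ and $\rho_1(\bfT(x))>(1{+}a)c_{\min}$ hold \emph{by construction}. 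This simultaneous control of $q$ and of the density at the destination is exactly what the entropy estimate needs, and is the step your sketch is missing.
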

\begin{proof} 
\underline{Step 1: Description of proof strategy.}
Let $(q,\bfT)$ be an optimal \TRAGRO\ couple from $\mu_{0}$ to $\mu_{1}$. By
Lemma \ref{uniformitypreference} we have that $\mu_{1}\simeq\calL^{d}.$ We
remind the reader that in this case, $\bfT$ is injective with no loss of
generality. We will prove that if the lower or upper bound in equation
\eqref{eq:UppLowSHK} is violated in a set of positive measure, then there exist
sets $A$ and $B$ of positive Lebesgue measure and a positive constants 
$\epsilon$ such that
\begin{enumerate}[label=\textbf{S.\arabic*}]
\item $\calL(\A)>0$ \ \ and \ \ $ \forall\, x\in \A:\ \ q^{2}(x) >
  1{+}\epsilon \ \text{ and } \ \rho_{1}(\bfT(x))> (1{+}\epsilon) \ol c.$  
  \label{S.1} 
\item $\calL(\B)>0$ \ \ and \ \ $\forall\, x\in \B:\ \    q^{2}(x)< 1{-}\epsilon
   \text{ and } \rho_{1}(x)<  \left(1 {-} \epsilon\right) \ol c$\label{S.2},
\end{enumerate}
 where $\ol c= c_{\min}$ if the lower bound is violated and $\ol c=c_{\max}$
if the upper bound is violated. 

Based on this, we can then  construct a new measure
$\mu^{t,s}_{1}$ with unit mass, but with lower density in $\bfT(\A)$ and
higher density in $\B$, resulting in a lower value in the minimizing
scheme than $\mu_{1}.$ This will contradict the assumption that $\mu_1$ is a
minimizer.\bigskip

\underline{Step 2: Construction of $A$ and $B$ if the lower bound is
  violated.}
We define the sets 
\[
X_{\leq c}:=\bigset{x \in X}{ \rho_{1}(x)\leq c} \quad \text{and} \quad 
X_{> c}:=\bigset{x \in X}{ \rho_{1}(x)> c},
\]
such that $X=X_{\leq c}\overset{.}\cup X_{>0}$. Throughout, we fix a measurable 
representative for the density $\rho_1$. 
The violation of the lower bound meant that 
\[
\exists\, c_1 < c_{\min}: \quad \Ld(X_{\leq c_1})>0.
\]

\underline{Step 2.1:  Construction of  $B$.}
By Lemma \ref{uniformitypreference}, we have
$\rho_{1}(\bfT(x))\leq \rho_{1}(x)$ a.e.\ on $X$. Hence for all $c>0$ we find
\begin{equation*} 
x\in X_{\leq c}  \ \Rightarrow \
  \rho_{1}(x)\leq c  \ \Rightarrow \ 
   \rho_1(\bfT(x))  \leq c \ \Rightarrow \  \bfT(x)\in X_{\leq c} \quad
   \text{a.e.\ on } X.
\end{equation*}
This implies $ \Ld(\bfT(X_{\leq c})) \leq \Ld(X_{\leq c} ),$ which in turn implies that
\begin{equation*}
\begin{split}
\mu_{1}(\bfT(X_{\leq c}))&=\int_{\bfT(X_{\leq c})}\rho_{1}(y) \Ld (\rmd y) 
 \leq  c \Ld (\bfT(X_{\leq c} ))
\leq c \Ld \left(X_{\leq c} \right) \leq \frac c{c_{\min}} \,\mu_{0}(X_{\leq c_{\min}}),  
\end{split}
\end{equation*} 
since $\rho_0(x)\geq c_{\min}$ a.e.\ on $X$.  However, by the definition of the
\TRAGRO\ couple $(\bfT,q)$, we have
$\mu_{1}(\bfT(X_{\leq c})) = \int_{X_{\leq c}} q^{2}\mu_{0}(\rmd x)$.  Hence
for $c\in [c_1,c_{\min}]$ there must be a set $B_c \subset X_{\leq c}$ with
$\calL( B_c)>0$ on which $q^{2}\leq c/c_{\min}$ a.e. Therefore, \ref{S.2} is
satisfied for all $\epsilon \in {]0,1{-}c_1/c_{\min}[}$.\smallskip

\underline{Step 2.2: Construction of $A$.}
In Step 2.1 we showed that $\bfT(X_{\leq c} ) \subset X_{\leq c}\cup N$ for
a null set $N$. Using that $M_0\subset M_1$ implies $\bfT^{-1}(M_{0})\subset \bfT^{-1}
(M_1)$ and that $\bfT$ is injective, we find 
\[
X_{\leq c} \overset{\text{inject}}= \bfT^{-1}(\bfT(X_{\leq c})) \subset \bfT^{-1} 
\big( X_{\leq c}\cup N\big) = \bfT^{-1} (X_{\leq c}) \cup \wt N. 
\]
This implies $ \Ld (X_{\leq c} )\leq \Ld (\bfT^{-1}(X_{\leq c} ))$, and as in
Step 2.1 we obtain  
\begin{equation}
\begin{split}\label{laters}
\mu_{1}(X_{\leq c})
&=\int_{X_{\leq c}}\rho_{1}(y) \Ld (\rmd y) 
 \leq  c \Ld (X_{\leq a} ) \leq c \Ld (\bfT^{-1}(X_{\leq c})) 
  \leq \frac c{c_{\min}} \,\mu_{0}(\bfT^{-1}(X_{\leq c})) . 
\end{split}
\end{equation} 

 We claim that there is a $c_2> c_{\min}$ such that
\begin{equation}
 \label{eq:a*.geq.cmin}
 m_1(c_2):=\mu_{1}(X_{\leq c_2})
\leq \frac12\big( m_1(c_{\min})+ m_0(c_{\min})\big) < m_0(c_{\min}), 
\end{equation} 
where $m_0(c):=\mu_0(\bfT^{-1}(X_{\leq c}))$. For this we first observe 
\[
\mu_1(X_{\leq c_{\min}})  \leq c_{\min} \Ld \big( 
X_{\leq c_{\min}}{\setminus}  X_{c_1} \big) +c_1 \Ld(X_{\leq c_1}) < c_{\min}
\Ld(X_{\leq c_{\min}}). 
\]
This implies that \eqref{laters} holds for $c=c_{\min}$ with strict inequality,
i.e.\ $m_1(c_{\min}) < m_0(c_{\min})$. We set $\delta:=(
m_0(c_{\min}){-}m_1(c_{\min}))/2>0$. 

Next, we observe that $m_0$ and $m_1$ are non-decreasing functions by
definition of $X_{\leq a}$. Finally, $m_1$ (and also $m_0$) is continuous from
the right, because $a_k\downarrow a$ implies
$X_{\leq a} = \cap_{k=1}^\infty X_{\leq a_k}$ and the measure $\mu_1$ is
continuous along non-increasing sequences.  With this, we find $c_2>c_{\min}$
such that $m_1(c_2)\leq m_1(c_{\min}){+}\delta = 
m_0(c_{\min}){-}\delta< m_0(c_{\min})$,
and \eqref{eq:a*.geq.cmin} is established.

Switching to $X_{>a}=X\setminus X_{\leq a}$ and using $\mu_j(X_{>a})=1-
\mu_j(X_{\leq a})$ we find, for all $c\in [c_{\min},c_3]$ with $c_3=\min\{
c_2, (1{+}\delta)c_{\min}\}$, the estimate
\begin{align*}
\mu_1(X_{>c})& =1 - m_1(c) \geq 1-m_1(c_2) \geq 1-m_2(c_{\min}) + \delta
\\
& = \mu_0(X_{>c_{\min}}) + \delta \geq \mu_0(X_{>c}) + \big(\frac{c}{c_{\min}}
- 1\big) \ \geq \ \frac{c}{c_{\min}}\, \mu_0(X_{>c_{\min}}). 
\end{align*}
By the definition of $(\bfT,q)$ we also have $\mu_1(X_{>c})=
\int_{\bfT^{-1}(X_{>c})} q^2 \mu_0(\d y) $, which implies, for all $c\in
{]c_{\min},c_3[}$, the existence of a
measurable set $A_c\subset X_{>c}$ with $\Ld(A_c)>0$ and $q^2>c/c_{\min}$ a.e.\
in $A_c$. Hence, \ref{S.1} is satisfied for all $\epsilon \in
(c_3{-}c_{\min})/c_{\min}$.\medskip 

\underline{Step 3.0:  Construction of $A$ and $B$ if the upper  bound
  is violated.} 
We proceed completely analogous to Step 2 by interchanging the
inequality signs and replacing $\bfT$ by $\bfT^{-1}$. The violation of the
upper bound means that
\[
\exists \, c_4> c_{\max}: \quad \Ld(X_{\geq c_4})>0, 
\]
where now $X_{\geq c}=\bigset{x\in X}{\rho_1(x)\geq c}$ and $X_{<
  c}=\bigset{x\in X}{\rho_1(x) < c}$. \smallskip

\underline{Step 3.1: Construction of the set $\A$.}
Lemma \ref{uniformitypreference}  provides $\rho_{1}(\bfT^{-1}(x))\geq
\rho_{1}(x)$ a.e.\ on $\YX$, and hence $\Ld(X_{\geq c}) \geq
\Ld(\bfT^{-1}(X_{\geq c}))$. As in Step 2.1 we have  
\begin{equation*}
\begin{split}
\int_{\bfT^{-1}(X_{\geq c})} \!\! q^2 \mu_0(\d x)=\mu_{1}(X_{\geq c})\geq c
\Ld(X_{\geq c}) 
\geq c \Ld(\bfT^{-1}(X_{\geq c}))  \geq \frac c{c_{\max}}
\mu_0\big(\Ld(\bfT^{-1}(X_{\geq c})) \big) .
\end{split}
\end{equation*}
Hence, for all $c\in {]c_{\max},c_4]}$ there exists $A_c \subset X_{\geq c} $
such that $q^2 \geq c/c_{\max}$ a.e.\ on $A_c$, i.e.\ \ref{S.1} holds for all $\epsilon \in
{]0, (c_4{-}c_{\max})/c_{\max}[}$.\smallskip

\underline{Step 3.2: Construction of the set $\B$.}  Using Lemma
\ref{uniformitypreference} and the injectivity of $\bfT$ we obtain
$\Ld(\bfT(X_{\geq c})) \geq \Ld(X_{\geq c})$ and furthermore 
\begin{equation*}
M_1(c):=\mu_{1}(\bfT(X_{\geq c} )) \geq c \Ld(\bfT(X_{\geq c})) \geq c
\Ld(X_{\geq c}) \geq \frac{c  M_0(c)}{c_{\max}} \text{ with } M_0(c):=
\mu_0(X_{\geq c}).
\end{equation*} 
Using $0<M_0(c_4)=\Ld(X_{\geq c_4})$ with $c_4>c_{\max}$ we easily see $
M_1(c_{\max}) > M_0(c_{\max})$. 

Using the $M_0$ and $M_1$ are non-increasing in $c$ and that $M_1$ is
continuous from the left, we can argue as in Step 2.2 to find $c_5< c_{\max}$ such that 
for $c\in {]c_5,c_{\max}[}$ we have 
\[
\int_{X_{<c}} \!\! q^2 \mu_0(\d y) = \int_{\bfT(X_{<c})}  \!\! \mu_1(\d y)
=\mu_1(\bfT(X_{<c})) = 1{-}M_1(c) \leq \frac{c(1{-}M_0(c))}{c_{\max}}
= \frac{c\mu_0(X_{<c})}{c_{\max}} .
\]
Thus, there exists $B_c \subset X_{<c} $ with $\Ld( X_{<c})>0$ and $q^2 \leq c/c_{\max}$
a.e.\ on $X_{<c}$, and \ref{S.2} holds for all $\epsilon \in {]0,(c_{\max} {-}
  c_5)/c_{\max} [} $.\medskip

\underline{Step 4: Construction of a counterexample.}
 We start by defining $\mu^{\A}_{0}$ the restriction of $\mu_{0}$ on $\A,$
$\mu^{\bfT(\A)}_{1}$ the restriction of $\mu_{1}$ on $\bfT(\A),$ $\mu^{\B}_{0}$ the
restriction of $\mu_{0}$ on $\B,$ and $\mu^{\bfT(\B)}_{1}$ the restriction of
$\mu_{1}$ on $\bfT(\B)$.  We define the comparison measure by 
\[
=\mu_{1} -s\mu^{\bfT(\A)}_{1} + t\mu^{\B}_{0}
\]
where small $s>0$ and $t>0$ are chosen later. 

Applying the splitting from Lemma \ref{split} to $(\bfT,q)$ we obtain
\[
\HK^{2}(\mu_{0},\mu_{1})= \HK^{2}(\mu_{0}{-}\mu^{\A}_{0} {-}\mu^{\B}_{0}, 
 \mu_{1}{-}\mu^{\bfT(\A)}_{1}{-}\mu^{\bfT(\B)}_{1})+ \HK^{2}(\mu^{A}_{0},\mu^{\bfT(\A)}_{1})
+ \HK^{2}(\mu^{B}_{0},\mu^{\bfT(\B)}_{1}).
\]
Moreover, using the decompositions 
\[
\ba{c@{\,}c@{\,}c@{\,}c@{\,}c@{\,}c@{\,}c@{\,}c@{\,}c@{\,}c}
\mu_0&= &\big(\mu_0{-}\mu_0^A{-}\mu_0^B\big)&+& \mu_0^A &+&
 t\mu_0^B &+&(1{-}t)\mu_0^B& \text{ and } 
\\
\mu_1^{t,s}&=& \big(\mu_1{-}\mu_1^{\bfT(A)}{-}\mu_1^{\bfT(B)}\big) &+& 
(1{-}s)\mu_1^{\bfT(A)}& + &t \mu_0^B& +& \mu_1^{\bfT(B)}, \ea
\]
we can apply the subadditivity for $\HK^2$ (cf.\
\cite[Lem.\,7.8]{LiMiSa18OETP}) and obtain 
\begin{align*}
\HK^2(\mu_0,\mu_1^{t,s} )&\leq
\HK^2(\mu_0{-}\mu_0^A{-}\mu_0^B,\mu_1{-}\mu_1^{\bfT(A)}{-}\mu_1^{\bfT(B)}) 
\\ &\quad 
+\HK^2(\mu_0^A,(1{-}s)\mu_1^{\bfT(A)} )+\ 0\ +\HK^2((1{-}t)\mu_0^B,\mu_1^{\bfT(B)} ). 
\end{align*}

Combining  this with the above splitting identity we find 
\begin{equation*}
\begin{split}
\HK^{2}(\mu_{0},\mu_{1}) &\geq
\HK^2(\mu_0{-}\mu_0^A{-}\mu_0^B,\mu_1{-}\mu_1^{\bfT(A)}{-}\mu_1^{\bfT(B)}) 
\\ &\quad
  + \HK^{2}(\mu^{A}_{0},\mu^{\bfT(\A)}_{1}){-}\HK^2(\mu_0^A,(1{-}s)\mu_1^{\bfT(A)} )
 \\ &\quad
+ \HK^{2}(\mu^{B}_{0},\mu^{\bfT(\B)}_{1}){-} \HK^2((1{-}t)\mu_0^B,\mu_1^{\bfT(B)} ).
\end{split}
\end{equation*}

The terms in the second and third line can be further estimated by the
conditions \ref{S.1} and \ref{S.2}, namely
\begin{align*}
\mu_1^{\bfT(A)}(X)&=\mu_1(\bfT(A))\geq (1{+}\epsilon)\mu_0(A)=(1{+}\epsilon)\mu_0^A(X)
\text{ \ and \ } 
\\
\mu_1^{\bfT(B)}(X)&=\mu_1(\bfT(B))\leq (1{-}\epsilon)\mu_0(B)=(1{-}\epsilon)\mu_0^B(X),
\end{align*}
and the scaling of $\HK^2$ in Proposition \ref{pr:ScalHK}. For $s \in
[0,s_\epsilon]$ with $s_\epsilon :=1-1/(1{+}\epsilon)^2 $ we find 
\begin{equation*}
\begin{split}
  \HK^{2}(\mu^{A}_{0},(1{-}s)\mu^{\bfT(A)}_{1})&
=\sqrt{1{-}s}\,\HK^{2}(\mu^{A}_{0},\mu^{\bfT(\A)}_{1}) - 
\big(1-\sqrt{1{-}s}\,\big)\big(\sqrt{1{-}s}\,\mu_1(\bfT(A)) - \mu_0(A)\big)
\\ & 
\leq \HK^{2}(\mu^{\A}_{0},\mu^{\bfT(\A)}_{1}).
\end{split}
\end{equation*} 
\EEE Similarly, for $t\in [0,t_\epsilon]$ with  $t_\epsilon=
1-(1{-}\epsilon)^2=\epsilon(2{-}\epsilon)$,  we have 
\begin{equation*}
\begin{split}
  \HK^{2}((1{-}t)\mu^{\B}_{0},\mu^{\bfT(\B)}_{1})&
  =\sqrt{1{-}t}\,\HK^{2}(\mu^{\B}_{0},\mu^{\bfT(\B)}_{1})
 - \big( 1 -\sqrt{1{-}t}\,\big)\big(\sqrt{1{-}t}\, \mu_0(B) - \mu_1(\bfT(B))
 \big)\\
& \leq  \HK^{2}(\mu^{\B}_{0},\mu^{\bfT(\B)}_{1}).
\end{split}
\end{equation*}

We can now choose $t_* \in {]0,t_\epsilon[}$ and $s_* \in {]0,s_\epsilon[}$
with $s_*\mu^{\bfT(A)}_{1}(X) = t_*\mu^{B}_{0}(X)$ such that 
$\mu^{t_*,s_*}(\YX)=1$.  By the above construction we have
$\HK^{2}(\mu_{0},\mu_{1})\geq \HK^{2}(\mu_{0},\mu^{t_*,s_*}_{1}) $ which
implies on $(\calP(X),\SHK)$ the desired estimate 
\[
\SHK^2(\mu_{0},\mu_{1})\geq \SHK^{2}(\mu_{0},\mu^{t_*,s_*}_{1}).
\]
To see that $\E(\mu_{1}^{t,s})<\E(\mu_{1}),$ we repeat the arguments in Steps 3
and 4 of the proofs of Propositions \ref{pr:discr.max.princ.HK.upp} and
\ref{pr:discr.max.princ.HK.low}.
\end{proof}

\subsection{Many minimization steps}
\label{su:SingeStepHK}

Applying the MM scheme means to apply the minimization problems
\eqref{schemerepeat} iteratively. 
If we repeat the minimization problems, we will show that the density bounds
are such that we keep good a priori bounds that depend only on the actual time 
$t=n\tau$ but not on the number of steps.

For the MM scheme of $(\calM(\YX),\E,\HK)$ we have derived the upper and lower
bounds for $\rho_1$ that depend on 
\[
\ul \rho_0:=\mafo{ess\,inf}\: \rho_0 \quad \text{and} \quad  
\ol \rho_0= \mafo{ess\,sup} \:\rho_0
\]
in the form 
\begin{align}
\label{eq:HK.iter.gen}
\HK:\ &\min \left\{ b, \frac{\ul\rho_0}{(1
  {+} 2\tau \max\{    E'(b),0\})^2} \!\right\} 
   \leq \rho_1(x) \leq 
\max\left\{ a, \frac{\ol\rho_0 }{(1{+}2\tau \min\{E'(a),0\})^2} \!\right\},
\\
\label{eq:SHK.iter}
\SHK:\ & \phantom{\min  b \frac{\ul\rho_0}{(1
  {+} 2\tau \max\{ E'(b),0\})^2 \quad}}\ul\rho_0 \leq \rho_1(x) \leq \ol\rho_0,
\end{align}
where $a,b>0$ are arbitrary as long as $2\tau E'(a)>-1$. 

Thus, when constructing $\mu^\tau_n$ by the MM scheme
\eqref{schemerepeat} we 
easily can apply these bounds and obtain $\mu^\tau_n = \rho_n\nu$ with the
corresponding density bounds. If $E'$ has a positive zero $c_*>0$, then 
changes sign, we immediately obtain a global bound for all iterates  in  the
form 
\[
\min\{ c_* , \ul\rho_0\} \leq   \rho_k(x) \leq \max\{ c_*, \ol\rho_0\} \quad
\text{a.e.\ in }\YX. 
\]
Thus, the more difficult cases are when $E$ is either strictly decreasing (i.e.\ $E'(c)
< 0$ for all $c>0$) of strictly increasing (i.e.\ $E'(c)>0$ for all
$c>0$). Both these cases  can occur and are relevant, e.g.\ for the choices
$E(c)=- \sqrt{c}$ or $E(c)=c^2$. 

Also in these general cases we are able to provide suitable upper and lower
density bounds that only depend on $k\tau$, which is the original time in the
gradient-flow equation.

\begin{proposition}[General lower and upper density bounds for
  $\HK$]\label{pr:UppLow.HK}  Assume that $E:[0,\infty) \to \R$ is lsc, convex.
Assume $0<\ul\rho_0 \leq \rho(x) \leq \ol\rho_0<\infty$ and set 
and set 
\[
\ul S:= \sup\bigset{ E'(c) }{c\in (0,\ul\rho_0] } \leq \ol S :=\inf\bigset{ E'(c)
}{c\geq \ol\rho_0}>-\infty.
\]
Assume further $\tau \ol S \geq -1/4$,     
then for all $k\in \N$ we have $\mu^\tau_k= \rho^\tau_k \nu$ where
  $\rho^\tau_k$ satisfies, for all $k\in \N$, the general density bounds
\begin{equation}
  \label{eq:GenIterLowUpp}
 \ul\rho_0\,\ee^{-4\max\{\ul S,0\}\, k\tau} \leq \rho^\tau_k(x) \leq  
\ol\rho_0\,\ee^{ 8\max\{- \ol S, 0\} \,k\tau} \text{ a.e.\ in }\YX . 
\end{equation} 
\end{proposition} 
\begin{proof} The result follows essentially by iterating
  \eqref{eq:HK.iter.gen}.

\ul{Step 1: Upper bound.}
We construct a nondecreasing sequence $(a_k)$ via
  $a_0=\ol\rho_0$ and the recursion $a_{k+1}= a_k/(1{+}2\tau \min\{
  E'(a_k),0\})^2\geq a_k$. Using the upper estimate in \eqref{eq:HK.iter.gen}
  an an induction over $k$, we obtain for
  $\ol\rho_k:=\mafo{ess\,sup}\rho^\tau_k$,  the estimate  
\[
\rho^\tau_k(x)\leq \ol\rho_k \leq a_k \quad \text{for all } k \in \N. 
\]
However, the monotonicities of $(a_k)$ and of $E'$ imply 
$ \min\{ E'(a_k),0\} \geq  \min\{ E'(a_k),0\}= \min\{ \ol S,0\}=$. Thus,
the recursion for $a_k$ implies 
\[
a_k \leq \frac{a_0}{\big(1{+} 2\tau  \min\{ \ol S,0\}\big)^{2k}}
\overset{*}\leq a_0\big(1{-} 4\tau  \min\{ \ol S,0\}\big)^{2k} \leq  \ol
\rho_0\, \ee^{8\max\{-\ol S,0\} \,k\tau},
\] 
where in $\overset{*}\leq$ we used $\tau\ol S \geq -1/4$ and the estimate
$(1{-}r)^{-2} \leq 1{+}2r$ for all $r\in [0,1/2]$. 

\ul{Step 2: Lower bound.} We proceed analogously and define $(b_k)_{k\in \N}$ 
via $b_0=\ul\rho_0$ and the recursion $b_{k+1} =
b_k/(1{+}2\tau \max\{E'(b_k),0\})^2 \leq b_k$. This yields
\[
\rho^\tau_k(x)\geq \ul\rho_k \geq b_k \geq \frac{\ul\rho_0}{\big(1{+}2\tau
  \max\{\ol S,0\}\big)^{2k}} \geq \ul\rho_0 \,\ee^{-4\max\{\ul S,0\}\,k\tau}, 
\] 
which is the desired result. 
\end{proof} 

The upper bound can be improved from exponential in $t=k\tau$ into quadratic,
if we impose a suitable lower bound for $E'$, namely 
$E'(c) \geq -e_*/\sqrt{c}$ for $c\geq c_*$ and some $e_*\geq 0$. 

\begin{proposition}[Iteration of the upper bound for $\HK$]
\label{pr:Iteration.HK} Assume that $E:[0,\infty) \to \R$ is lsc, convex, and
satisfies $E'(c) \geq -e_*/\sqrt{c}$ for $c\geq c_*$. 
Then, $\ol\rho_k = \mafo{ess\,sup}\,\rho^\tau_k$  satisfies 
\begin{equation}
  \label{eq:IterUpperEst}
  \ol\rho_k \leq \big( \sqrt{\max\{ \ol\rho_0, c_*, 4\tau^2 e_*^2\} } + 4e_* \,k
\tau\,\big)^2 \text{ for all } k \in \N,
\end{equation}
\end{proposition}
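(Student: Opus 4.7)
The plan is to iterate the single-step upper bound (Proposition \ref{pr:discr.max.princ.HK.upp}) and convert its multiplicative form into an additive inequality for $\sqrt{\ol\rho_k}$, exploiting the sub-$\sqrt{c}$ decay of $-E'$ assumed here. At each step the natural choice of threshold $c_\mafo{upp}$ is the one that makes the two arguments of the max in the single-step bound coincide, which turns the estimate into a clean recursion rather than a max of two expressions.

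Concretely, I would set $R_k := (\sqrt{\ol\rho_k} + 2\tau e_*)^2$ and plug this into Proposition \ref{pr:discr.max.princ.HK.upp} as $c_\mafo{upp}$. Because $\sqrt{R_k} - 2\tau e_* = \sqrt{\ol\rho_k}$, whenever $R_k \geq c_*$ the assumption $E'(c) \geq -e_*/\sqrt{c}$ gives
\[
\frac{\ol\rho_k}{\bigl(1+2\tau\min\{E'(R_k),0\}\bigr)^2} \;\leq\; \frac{\ol\rho_k}{(1-2\tau e_*/\sqrt{R_k})^2} \;=\; R_k ,
\]
so the one-step bound collapses to $\ol\rho_{k+1} \leq R_k$, equivalently
\[
\sqrt{\ol\rho_{k+1}} \;\leq\; \sqrt{\ol\rho_k} + 2 e_* \tau .
\]
A telescoping from $k=0$ then produces an inequality of the form $\ol\rho_k \leq (\sqrt{M} + C e_* k\tau)^2$. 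The constant $C=4$ appearing in \eqref{eq:IterUpperEst} rather than $2$ is the price one pays for enforcing a quantitative buffer between $\sqrt{R_k}$ and the singular threshold $2\tau e_*$, which is precisely the reason the definition of $M$ contains the term $4\tau^2 e_*^2$.

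To get the induction started I would first absorb the possibly small cases $\ol\rho_0 < c_*$ or $\ol\rho_0 < 4\tau^2 e_*^2$ by applying the one-step bound once with $c_\mafo{upp} = M := \max\{\ol\rho_0, c_*, 4\tau^2 e_*^2\}$, observing that $\ol\rho_1 \leq M$ in any case. From then on $\ol\rho_k$ is nondecreasing (the denominator in the one-step estimate is $\leq 1$), so it remains above $\max\{c_*, 4\tau^2 e_*^2\}$ and the two conditions needed for the recursion, namely $R_k \geq c_*$ (to invoke the hypothesis $E'(c) \geq -e_*/\sqrt{c}$) and $\tau E'(R_k) > -1/2$ (to invoke Proposition \ref{pr:discr.max.princ.HK.upp}), propagate automatically. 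The only real difficulty is this bookkeeping—checking at each step that the single-step Proposition applies and that $R_k$ stays in the regime where the sub-$\sqrt{c}$ lower bound on $E'$ is available; no new analytical input beyond the one-step estimate and the hypothesis on $E'$ is required.
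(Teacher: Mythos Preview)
Your core strategy---iterate Proposition \ref{pr:discr.max.princ.HK.upp} with a threshold $c_\mafo{upp}$ depending on the current bound and reduce to an additive recursion for $\sqrt{\ol\rho_k}$---is exactly the paper's. Your choice $c_\mafo{upp}=R_k=(\sqrt{\ol\rho_k}+2\tau e_*)^2$ is in fact slightly cleaner than the paper's choice $c_\mafo{upp}=\ol\rho_k$: the paper then needs the extra elementary inequality $(1{-}\alpha)^{-1}\leq 1+2\alpha$ for $\alpha\in[0,1/2]$, which is what produces the increment $4e_*\tau$ rather than your $2e_*\tau$. Your ``buffer'' explanation for the constant $4$ is not the actual reason.

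However, your bookkeeping for maintaining the hypotheses of the single-step proposition has two genuine errors. First, the claim ``$\ol\rho_1\leq M$'' is false in general: if $\ol\rho_0=M$ and $E'(M)<0$, then the second argument of the max in \eqref{eq:DensUppBd.HK} is $\ol\rho_0/(1{+}2\tau E'(M))^2>M$. Second, ``$\ol\rho_k$ is nondecreasing'' is false---the one-step result is an upper bound, not an identity, and the true $\ol\rho_k$ can decrease (e.g.\ whenever $E'>0$). If $\ol\rho_k$ drops far below $c_*$ you can lose the hypothesis $R_k\geq c_*$ needed to invoke $E'(R_k)\geq -e_*/\sqrt{R_k}$. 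The correct fix, which is precisely what the paper does (``the estimate is monotone in $\ol\rho_k$, hence we may replace $\ol\rho_0$ by $\max\{\ol\rho_0,c_*,4\tau^2 e_*^2\}$''), is to track not $\ol\rho_k$ but an auxiliary sequence $b_k$ with $b_0=M$ and $b_{k+1}$ given by your recursion applied to $b_k$. Monotonicity of the one-step bound in the input gives $\ol\rho_k\leq b_k$ by induction, and $(b_k)$ is nondecreasing \emph{by construction}, so $b_k\geq M\geq c_*$ and $b_k\geq 4\tau^2 e_*^2$ propagate automatically.
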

\begin{proof} \underline{Iteration of the upper bound:}  We apply the upper
  bound for $\HK$ iteratively using $\ol \rho_k = \mafo{ess\,sup}\, \rho_k$
  with $\ol \rho_0=c_{\max}$ by
  choosing suitable $a=a_k$. Clearly, the estimate is monotone in $\ol\rho_k$,
  hence we may replace $\ol\rho_0$ by $\max\{\ol\rho_0, c_*, 4\tau^2
  e_*^2,c_{max}\}$. Thus, we obtain $\ol\rho_k\geq \ol\rho_0\geq c_*$ and
  choosing $a_k=\ol\rho_k$ is admissible because $2\tau E'(a_k)\geq  -2\tau
  e_*/\sqrt{\ol\rho_k} \geq - 1/2>-1$. Moreover, we obtain
\[
\sqrt{\ol\rho_{k+1}} 
 \leq \frac{\sqrt{\ol\rho_k}} {1+ 2\tau e_*/ \sqrt{\ol\rho_k} }  
 \leq \sqrt{\ol\rho_k}\:\big( 1 +4\tau e_* /\sqrt{\ol\rho_k}\big) 
   =  \sqrt{\ol\rho_k} + 4 \tau e_*.
\]
where we used the estimate $(1{-}\alpha)^{-1} \leq 1+ 2 \alpha $ for
  $\alpha \in [0,1/2]$. Thus, the desired estimate \eqref{eq:IterUpperEst}
  follows. 
\end{proof}

\section{Existence for EVI using local $\kappa$-concavity}
\label{se:GenSavare's method}

Following \cite{Sava07GFDS, LasMie19GPCA} we first give precise definitions of
geodesics curves in a general metric space $(\calX,\sfd)$ and the local-angle
condition (LAC). Based on these fundamental concepts, \cite{Sava11?GFDS,
  MurSavII+III} introduces the two geometry-descriptive functions
$\langle \cdot, \cdot\rangle_{\rmu\rmp}$ and $\invd(\cdot, \cdot)$ (cf.\
Definition \ref{de:TwoGeodesics}) that allow us to quantify the relationship
between two geodesics emanating from the same point. Next we discuss semiconvex
and semiconcave functions in the sense of geodesic $\kappa$-concavity or
geodesic $\lambda$-convexity. When the squared distance
$\frac12\sfd_{\calX}^2(t, \ob)$ is $\kappa$-concave along a geodesic $\geo{xy}$
with respect to some observer $\ob$ the derivative of
$t\mapsto \frac12\sfd^2_{\calX}(\geo{x y}(t), \ob)$ can be estimated in terms
of the two quantities mentioned above. To illustrate the concepts of the LAC
and the semiconcavity of the squared distance, we consider a few simple
examples in Section \ref{su:Exa.LAC.k-concave}.

Finally, we discuss Evolutionary
Variational Inequalities EVI$_\lambda$ for a metric gradient system
$(\calX,\phi,\sfd_{\calX})$ and show that solutions can be constructed via the minimizing
movement scheme, if $\phi$ is strongly $\lambda$-convex with compact
sublevels and the closure of $\mathrm{dom}(\phi)$ can be written as the closure
of the union of sets $\A_{\kappa}$, where $\frac12\sfd^2_\calX$ is
$\kappa$-concave. See Theorem \ref{th:MainExistTheo} for the exact statement.

The main arguments for our existence theory are based on results developed
in \cite{Sava07GFDS, Sava11?GFDS} which was a prelude for the work in
\cite{MurSav20GFEV} and \cite{MurSavII+III}. Hence, we provide the
corresponding result here without our own proofs expecting a soon publication
of the latter work. This contains in particular Proposition
\ref{pr:EstimDerDist} and the estimates on the Minimizing Movement scheme in
Section \ref{su:MinimMovem}. We would like however to remark that we
extend some of the results in \cite{MurSavII+III} by weakening the assumption
that the squared distance must be universally $K$-concave for some $K>0.$
Instead, we assume that we have concave bounds $K_{n}$ for a collection of
nested sets $A_{n}$, whose closure is the domain of $\phi.$ Proofs that depend
on this modified assumption are provided in both versions of the paper.  
 
\subsection{Geodesic spaces and the local angle condition (LAC)} 
\label{su:GeodSpaces}

We now provide some basic definitions for geodesics in metric spaces and some
of their properties.

\begin{definition}[Geodesics]
\label{de:Geod}
Let $(\calX,\sfd_{\calX})$ be a metric space. A curve
$\geo{x y}: [0,1] \to \calX$ is called a \emph{(constant-speed) 
  geodesic joining $x$ to $y$} if
\begin{align*}
&\geo{x y}(0)=x,\quad \geo{x y}(1)=y, \quad\text{and }
\\
&\sfd_{\calX}(\geo{x y}(t_{1}),\geo{x y}(t_{2}))=|t_{2}{-}t_{1}|\sfd_{\calX}(x,y)
\text{ for all } t_{1},t_{2}\in[0,1].
\end{align*}
We will denote the set of all such geodesics with $\Geod(x,y)$.

The metric space $(\calX,\sfd_{\calX})$ is called a \emph{geodesic space}, if for all 
points $x,\,y \in \calX$ the set $\Geod(x,y)$ is nonempty. The metric
$\sfd_{\calX}$ is then called a \emph{geodesic distance}. 
\end{definition}

For our theory  it will be important to introduce the concept of geodesic covers
of an arbitrary set. It plays the role of the convexification in Banach spaces,
however, our covering notion is not idempotent, i.e.\ $\GeoCov{\A} \subsetneqq
\GeoCov{\left(\GeoCov{\A}\right)}$ is possible.   

\begin{definition}[Geodesic cover]
\label{de:GeodCover}
Let $(\calX,\sfd_{\calX})$ be a metric space and $\A$ a subset of $\calX$. We define
the \emph{geodesic cover} $\GeoCov{\A}$ of $\A$ by 
\begin{equation}
  \GeoCov{\A}:= \bigset{\geo{xy}(t)}{ t\in [0,1] ,\
    \geo{xy}\in\Geod(x,y),\  x,y\in\A }.
\end{equation}
\end{definition}

In the following we introduce the two geometric concepts in geodesic metric
spaces called the \emph{local angle condition} (LAC) and
\emph{$\kappa$-concavity}.  These properties are going to be utilized in the sequel
to prove that the curves occurring by geodesically interpolating the points
produced by the minimizing scheme, converge to solutions of the
EVI$_\lambda,$ when the minimization step $\tau$ tends to zero. 
In order to introduce LAC, we first introduce the notions of
comparison angle between three points and of local angle between two
geodesics emanating from the same point. 

\begin{definition}[Comparison and local angles]
\label{de:CompLocAngle}
Let $(\calX,\sfd_{\calX})$ be a metric space. For three points
$x,y,z\in \calX$ with $x\not\in\{ y,z\}$, we set
\begin{equation*}
  A(x;y,z):=\frac{\sfd_{\calX}^{2}(x,y)+\sfd_{\calX}^{2}(x,z)-\sfd_{\calX}^{2}(y,z)}
  {2\sfd_{\calX}(x,y)\sfd_{\calX}(x,z)}  \in [-1,1] 
\end{equation*}
and define the \emph{comparison angle} $\varangle(x;y,z)\in[0,\pi]$
with vertex $x$ via
\begin{equation*}
{\varangle(x;y,z):=\arccos(A(x;y,z))
        \in [0,\pi].  }
\end{equation*}

Let $\geo{x y}$ and $\geo{x z}$ be two geodesics in $\calX$ emanating from point
$x=\geo{x y}(0)=\geo{x z}(0).$ The \emph{upper angle}
$\sphericalangle_{\rmu\rmp}(\geo{x y},\geo{x z})\in[0,\pi]$ and the \emph{lower
  angle} $\sphericalangle_{\rml\rmo}(\geo{x y},\geo{x z})\in[0,\pi],$ between
$\geo{x y}$ and $\geo{x z}$ are defined by
\begin{equation}
	\label{eq:LocAng}
\begin{aligned}
\sphericalangle_{\rmu\rmp}(\geo{x y},\geo{x z})&:=\limsup_{s, t \downarrow 0}
	\varangle(x,\geo{x y}(s),\geo{x z}(t)) , 
\\
\sphericalangle_{\rml\rmo}(\geo{x y},\geo{x z})&:=\,\liminf_{s, t \downarrow 0}\:
	\varangle(x;\geo{x y}(s),\geo{x z}(t)) . 
\end{aligned}
\end{equation}   
If $\sphericalangle_{\rmu\rmp} (\geo{x y},\geo{x z})=
\sphericalangle_{\rml\rmo} (\geo{x y},\geo{x z})$ holds, we say that the 
\emph{local angle} exists in the strict sense and write
$\sphericalangle(\geo{x y},\geo{x z})$.
\end{definition}

The local angle condition concerns three geodesics emanating from one
point and states that the sum of the three local angles does not exceed $2\pi$.  

\begin{definition}[Local angle condition]
\label{altdef}
We say that a point $x$ of a geodesic metric space $(\calX,\sfd)$
satisfies \emph{LAC}, if for any three geodesics
$\geo{x y},\geo{x z}, \geo{x w}$ emanating from $x,$ we have
\[
\sphericalangle_{\rmu\rmp}(\geo{x y},\geo{x z}) + \sphericalangle_{\rmu\rmp}(\geo{x z},\geo{x w}) +
\sphericalangle_{\rmu\rmp}(\geo{x w},\geo{x y}) \leq 2\pi. 
\]
\end{definition}

In geodesic spaces $(\calX,\sfd)$ the set of all geodesics emanating from a point
$x$ may be considered as a (nonlinear) surrogate of the tangent space 
defined in the case of manifolds. We now introduce a kind of scalar
product between two geodesics emanating from one point as a generalization of
the classical inner product on the tangent space.  Moreover, we define the
function $\invd$ that measures how much two geodesic curves emanating from one
point are exactly opposite to each other.

\begin{definition}[Comparing two geodesics] 
\label{de:TwoGeodesics}
 In the geodesic space $(\calX,\sfd_{\calX})$ consider three points
$x,y,z \in \calX$ and two geodesics $\geo{x y} \in \Geod(x, y),$ $\geo{x z} \in \Geod(x,z)$
and define 
\begin{align}
\label{eq:TwoGeod.SP}
& \begin{aligned} 
  &\langle\geo{x y},\geo{x z} \rangle_{\rmu\rmp}
    :=\sfd_{\calX}(x,y)\sfd_{\calX}(x,z) 
      \cos(\sphericalangle_{\rmu\rmp}(\geo{x y},\geo{x z}))  \\
  &\qquad =\liminf_{s,t\downarrow 0}\frac1{2st}\left(\sfd_{\calX}^{2}(x,\geo{x y}(s))
   +\sfd_{\calX}^{2}(x,\geo{x z}(t))-\sfd_{\calX}^{2}(\geo{x y}(s),\geo{x z}(t))  \right)
  \end{aligned}
\\
\label{eq:Delta2}
&
\invd^{2}(\geo{x y},\geo{x z})=\sfd_{\calX}^{2}(x,y)+\sfd_{\calX}^{2}(x,z)+2
\langle\geo{x y},\geo{x z}\rangle_{\rmu\rmp}\geq 0. 
\end{align}
\end{definition}
 The second form of $\langle\geo{x y},\geo{x z} \rangle_{\rmu\rmp}$ given
in \eqref{eq:TwoGeod.SP} is easily derived from the above definition when
taking into account that $\cos$ is decreasing on $[0,\pi]$, which turns the
limsup in $\sphericalangle_{\rmu\rmp}$ into a liminf. 

Considering a Hilbert space with scalar product $(x|y)$ and norm $\| z \|$
the geodesic curves are given via
$\geo{x y}=(1{-}t) x+t y,\, \geo{x z}=(1{-}t) x+t z$ and we easily find
\[
\langle\geo{x y},\geo{x z} \rangle_{\rmu\rmp} =\left(y{-}x\big|
z{-}x\right) \quad \text{ and } \quad 
\invd^2(\geo{x y},\geo{x z})= \big\| (y{-}x) - (z{-}x) \big\|^2,
\]
i.e.\ we have $\invd^2(\geo{x y},\geo{x z})=0$ if and only if $x =
\frac12(y{+}z)$. For general geodesic spaces we may consider a geodesic 
$\geo{yz}\in \Geod(y,z)$, choose $x$ as the midpoint 
$\geo{yz}(1/2)$, and set 
\[
\geo{x y}(t)=\geo{yz}\left((1{-}t)/2\right) \quad \text{and} \quad 
\geo{x z}(t)=\geo{yz}\left((1{+}t)/2\right) \quad \text{for }t\in
[0,1]. 
\]
Then, $\geo{x y}\in \Geod(x,y),$ $\geo{x z}\in \Geod(x,z)$ and
$\invd^2(\geo{x y},\geo{x z})=0$.

\begin{GSproof}
\COLORW
Subsequently, we will consider sequences $(\widehat x_n)_{n=0,1,...,N}$ obtained from
incremental minimization and will look at triples $(x,y,z)=(\wh
x_{n-1},\wh x_n,\wh x_{n+1})$ such that the above measures allow us to control
the changes in speed and direction of the discrete evolution. 

Now we will provide a Lemma that gives a bound of the first derivative of the
squared distance along a geodesic $\geo{x y} \in \Geod(x,y)$ with respect to
an arbitrary observer $\ob \in\calX$ in term of the inner product
$\langle\geo{x y},\geo{x \ob}\rangle_{\rmu\rmp}$ of the two geodesics.

\begin{proposition}\label{pr:FirVarInnProd}
Let $x,y,\ob $ in $\calX,$ and $\geo{x y}\in\Geod(x,y),$ then we have
\begin{equation}
\label{eq:FirVarInnProd}
\limsup_{t\downarrow 0} \frac{\sfd_{\calX}^{2}(\geo{x y}(t),\ob )
  -\sfd_{\calX}^{2}(x,\ob )}{2\,t}
\leq 	-\langle\geo{x y},\geo{x \ob}\rangle_{\rmu\rmp} \,.
\end{equation}
\end{proposition}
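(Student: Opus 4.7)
The plan is to reduce the squared-distance estimate to a linear one by factoring
\[
\sfd^2(\geo{xy}(t),\ob) - \sfd^2(x,\ob) \;=\; \bigl(\sfd(\geo{xy}(t),\ob) - D_o\bigr)\bigl(\sfd(\geo{xy}(t),\ob) + D_o\bigr),
\]
where $D_o:=\sfd(x,\ob)$ and $D_y:=\sfd(x,y)$. The case $D_o=0$ (i.e.\ $x=\ob$) is trivial, since then $\sfd(\geo{xy}(t),\ob)=tD_y$ and both sides vanish in the limit. Assuming $D_o>0$, continuity of $\geo{xy}$ gives $\sfd(\geo{xy}(t),\ob)+D_o \to 2D_o$ as $t\downarrow 0$, so it will suffice to show
\[
\limsup_{t\downarrow 0}\frac{\sfd(\geo{xy}(t),\ob) - D_o}{t}\;\leq\; -\frac{\langle\geo{xy},\geo{x\ob}\rangle_{\rmu\rmp}}{D_o}
\]
and then multiply by $D_o$, using that the second factor is strictly positive in a neighborhood of $t=0$ so that the limsup factors cleanly.

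For the linear estimate I will introduce an auxiliary parameter $s\in(0,1]$ on the geodesic $\geo{x\ob}$ and apply the triangle inequality $\sfd(\geo{xy}(t),\ob) \leq \sfd(\geo{xy}(t),\geo{x\ob}(s)) + (1{-}s)D_o$ to obtain
\[
\sfd(\geo{xy}(t),\ob) - D_o \;\leq\; \sfd(\geo{xy}(t),\geo{x\ob}(s)) - sD_o.
\]
Writing $A_{t,s}:=\frac{1}{2st}\bigl[t^2D_y^2 + s^2 D_o^2 - \sfd^2(\geo{xy}(t),\geo{x\ob}(s))\bigr]$, so that $\langle\geo{xy},\geo{x\ob}\rangle_{\rmu\rmp}=\liminf_{(t,s)\downarrow 0}A_{t,s}$ by the second form in \eqref{eq:TwoGeod.SP}, I rationalize:
\[
\sfd(\geo{xy}(t),\geo{x\ob}(s)) - sD_o \;=\; \frac{\sfd^2(\geo{xy}(t),\geo{x\ob}(s)) - s^2D_o^2}{\sfd(\geo{xy}(t),\geo{x\ob}(s)) + sD_o} \;=\; \frac{t\bigl(tD_y^2 - 2sA_{t,s}\bigr)}{\sfd(\geo{xy}(t),\geo{x\ob}(s)) + sD_o}.
\]
Dividing by $t$ isolates the factor that will produce the desired limit.

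The concluding step will handle the two-variable $\liminf$ carefully via an $\varepsilon$-$\delta$ argument. Given $\varepsilon>0$, by definition of $\langle\geo{xy},\geo{x\ob}\rangle_{\rmu\rmp}=\liminf_{(t,s)\downarrow 0}A_{t,s}$ there exists $\delta_\varepsilon>0$ with $A_{t,s}\geq \langle\geo{xy},\geo{x\ob}\rangle_{\rmu\rmp}-\varepsilon$ for all $0<t,s<\delta_\varepsilon$. Freezing any $s\in(0,\delta_\varepsilon)$ and letting $t\downarrow 0$, the denominator $\sfd(\geo{xy}(t),\geo{x\ob}(s))+sD_o$ converges to $2sD_o>0$ by continuity, while the numerator satisfies $tD_y^2 - 2sA_{t,s} \leq tD_y^2 - 2s(\langle\geo{xy},\geo{x\ob}\rangle_{\rmu\rmp}-\varepsilon)\to -2s(\langle\geo{xy},\geo{x\ob}\rangle_{\rmu\rmp}-\varepsilon)$. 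Hence
\[
\limsup_{t\downarrow 0}\frac{\sfd(\geo{xy}(t),\ob)-D_o}{t} \;\leq\; -\frac{\langle\geo{xy},\geo{x\ob}\rangle_{\rmu\rmp}-\varepsilon}{D_o},
\]
and sending $\varepsilon\downarrow 0$ yields the linear bound; multiplying by the positive factor $\sfd(\geo{xy}(t),\ob)+D_o\to 2D_o$ then gives the proposition. The main technical subtlety is precisely this handling of the joint liminf: one cannot naively interchange iterated limits in $s$ and $t$, so the $\varepsilon$-$\delta$ reformulation of the joint liminf is essential for first freezing an admissible $s$ and only then passing to the limit in $t$.
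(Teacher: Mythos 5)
Your proof is correct and uses essentially the same ingredients as the paper's: the liminf form of $\langle\geo{x y},\geo{x \ob}\rangle_{\rmu\rmp}$ from \eqref{eq:TwoGeod.SP}, the triangle inequality along $\geo{x\ob}$ to trade $\ob$ for $\geo{x\ob}(s)$, the difference-of-squares factorization, and a careful treatment of the joint two-variable limit (you via an $\varepsilon$-$\delta$ bound on the joint liminf with $s$ frozen, the paper via the iterated-limsup inequality). The reorganization — working at the level of the distance rather than its square and only squaring at the end, plus the explicit $x=\ob$ case — is just bookkeeping, and all the limit manipulations (bounded first factor, strictly positive limit of the second) check out.
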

\begin{proof}
 We start from the left-hand side and use the second definition of
$\langle\geo{x y},\geo{x \ob}\rangle_{\rmu\rmp}$. The joint limsup for
$s,t\downarrow 0$ is certainly bigger or equal the iterated limsup, which gives 
\begin{equation*}
\begin{split}    
&-\langle\geo{x y},\geo{x \ob}\rangle_{\rmu\rmp}
\geq\limsup_{s\downarrow 0}\left(\!\limsup_{t\downarrow 0} 
  \frac{\sfd_{\calX}^{2}(\geo{x y}(t),\geo{x \ob}(s)) 
  {-} \sfd_{\calX}^{2}(x,\geo{x y}(t))
  {-}\sfd_{\calX}^{2}(x,\geo{x \ob}(s))}{2ts}\right). 
\end{split}
\end{equation*}
 With $\sfd_{\calX}^{2}(x,\geo{x y}(t))=t^2 \sfd_{\calX}^2(x,y)$  we have 
$ \lim_{t\downarrow 0}\frac1{2ts} \sfd_{\calX}^{2}(x,\geo{x y}(t))=0$ and can
drop the middle term. Via $a^2-b^2=(a{-}b)(a{+}b)$ we obtain 
\begin{align*}
&-\langle\geo{x y},\geo{x \ob}\rangle_{\rmu\rmp}\geq\limsup_{s\downarrow 0} \left(
\limsup_{t\downarrow 0}\frac{\sfd_{\calX}^{2}(\geo{x y}(t),\geo{x\ob}(s))-\sfd_{\calX}^{2}(x,\geo{x \ob}(s))}{2ts} \right)
\\&
\geq\limsup_{s\downarrow 0}\limsup_{t\downarrow 0} \frac{A_-(t,s)}{t}\,
\frac{A_+(t,s)}{2s} \ \ 
\text{ with } A_\pm(t,s)= \sfd_{\calX}(\geo{xy}(t),\geo{x \ob}(s))\pm
\sfd_{\calX}(x,\geo{x \ob}(s)). 
\end{align*}
 We easily see $\lim_{t\downarrow 0} A_+(t,s)= \sfd_{\calX}(x,\geo{x \ob}(s))+
\sfd_{\calX}(x,\geo{x \ob}(s))=2s \,\sfd_{\calX}(x,\ob)$.

For $A_-(t,s)$ we use the triangle  inequality and $\geo{x \ob}\in\Geod(x,\ob )$  to obtain 
\begin{align*}
A_-(t,s)&=  \underbrace{\sfd_{\calX}(\geo{x y}(t),\geo{x \ob}(s)) 
 + \sfd_{\calX}(\geo{x \ob}(s), \ob)}    -  \underbrace{(\sfd_{\calX}(\geo{x \ob}(s), \ob)  
 + \sfd_{\calX}(x,\geo{x \ob}(s)))}.
\\
&\geq  \qquad  \qquad\sfd_{\calX}(\geo{x y}(t), \ob)  \qquad\qquad 
  -  \qquad\qquad \sfd_{\calX}(x,\ob) 
\end{align*} 
Together we arrive at the estimate 
\begin{align*}
-\langle\geo{xy},\geo{x \ob}\rangle_{\rmu\rmp}&\geq \limsup_{s\downarrow 0} 
\left( \limsup_{t \downarrow 0} \frac{\sfd_{\calX}(\geo{x y}(t),\ob )-
  \sfd_{\calX}(x,\ob)} {t} \: \sfd_{\calX}(x,\ob)\right)
 \\& =\limsup_{t \downarrow 0} \frac{\sfd_{\calX}(\geo{x y}(t),\ob ) {-}
  \sfd_{\calX}(x,\ob)} {t} \;\lim_{t\downarrow 0}\frac{\sfd_{\calX}(\geo{x y}(t),\ob )
 {+}   \sfd_{\calX}(x,\ob)}{2} \\
&= \limsup_{t \downarrow 0} \frac{\sfd_{\calX}^2(\geo{x y}(t),\ob )-
  \sfd_{\calX}^2(x,\ob)} {2\,t}\:.   
\end{align*} 
This is the desired result. 
\end{proof}

For geodesic spaces satisfying  LAC there holds an inequality of Cauchy-Schwartz type, 
which was first formulated in \cite[Cor.\,9.11]{Sava11?GFDS}. It will be
crucial for studying derivatives of distances along geodesics in so-called
semiconcave geodesic spaces, see Proposition \ref{pr:EstimDerDist}. 

\begin{proposition}
\label{pr:LAC.Estim}
If the geodesic space $(\calX,\sfd_{\calX})$ satisfies LAC at
$x \in \calX$, then  for all $x,\,y,\,\ob\in \calX$ and all
$\geo{x y}\in \Geod(x,y),\geo{x z}\in \Geod(x,z)$ we have the estimate 
\begin{equation}
\label{eq:lacresult}
\langle \geo{x y},\geo{x \ob}\rangle_{\rmu\rmp} + 
\langle \geo{x \ob},\geo{x z}\rangle_{\rmu\rmp}
\geq -\sfd_{\calX}(x,\ob )\invd(\geo{x y},\geo{x z})
\end{equation}
\end{proposition}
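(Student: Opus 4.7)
The plan is to reduce the stated inequality to a planar trigonometric estimate after extracting the common factor $\sfd(x,\ob)$. If $\ob=x$ both sides vanish, so we may assume $\sfd(x,\ob)>0$. Set $r:=\sfd(x,y)$, $s:=\sfd(x,z)$, and abbreviate the three upper angles at $x$ by
\[
\alpha:=\sphericalangle_{\rmu\rmp}(\geo{xy},\geo{x\ob}),\quad
\beta:=\sphericalangle_{\rmu\rmp}(\geo{x\ob},\geo{xz}),\quad
\gamma:=\sphericalangle_{\rmu\rmp}(\geo{xy},\geo{xz}).
\]
Using the first identity in \eqref{eq:TwoGeod.SP} and \eqref{eq:Delta2}, and dividing \eqref{eq:lacresult} by $\sfd(x,\ob)>0$, the claim becomes the scalar inequality
\[
r\cos\alpha + s\cos\beta \;\geq\; -\sqrt{r^2+s^2+2rs\cos\gamma}.
\]

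Next I would establish the key cosine comparison $\cos(\alpha+\beta)\leq\cos\gamma$. The upper angle $\sphericalangle_{\rmu\rmp}$ satisfies a triangle inequality on the space of directions at $x$ (a standard fact in the theory of Alexandrov spaces), which yields $\gamma\leq\alpha+\beta$. Combining with LAC at $x$, namely $\alpha+\beta+\gamma\leq 2\pi$, one finds $\alpha+\beta\in[\gamma,\,2\pi-\gamma]$. Since $\gamma\in[0,\pi]$ and since $\cos$ is symmetric about $\pi$ (monotone-decreasing on $[0,\pi]$, monotone-increasing on $[\pi,2\pi]$), its maximum on $[\gamma,2\pi-\gamma]$ is attained at both endpoints and equals $\cos\gamma$. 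Hence $\cos(\alpha+\beta)\leq\cos\gamma$.

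Finally, I would conclude via a two-dimensional complex representation. Introduce $w:=r\,\mathrm{e}^{i\alpha}+s\,\mathrm{e}^{-i\beta}\in\mathbb{C}$; then $\mathrm{Re}(w)=r\cos\alpha+s\cos\beta$ and $|w|^2=r^2+s^2+2rs\cos(\alpha+\beta)$. The elementary bound $\mathrm{Re}(w)\geq -|w|$, combined with the monotonicity of $t\mapsto\sqrt{r^2+s^2+2rs\,t}$ and the cosine estimate from the previous paragraph, gives
\[
r\cos\alpha+s\cos\beta\;\geq\;-\sqrt{r^2+s^2+2rs\cos(\alpha+\beta)}\;\geq\;-\sqrt{r^2+s^2+2rs\cos\gamma},
\]
which is the reduced inequality. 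Multiplying back by $\sfd(x,\ob)$ produces \eqref{eq:lacresult}.

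The main obstacle is the cosine comparison step: one must invoke the (nontrivial) triangle inequality for upper angles at $x$ and then combine it with the LAC bound to confine $\alpha+\beta$ to an interval on which $\cos$ is globally dominated by $\cos\gamma$. Once this angular fact is in hand, the remainder is a routine Cauchy--Schwarz-type manipulation in the plane via $\mathrm{Re}(w)\geq -|w|$.
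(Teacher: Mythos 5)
Your proof is correct, but it takes a genuinely different route from the paper's. Both arguments begin identically: after disposing of $\sfd(x,\ob)=0$ and dividing by $\sfd(x,\ob)>0$, the claim reduces to the scalar inequality $r\cos\alpha+s\cos\beta\geq-\bigl(r^2+s^2+2rs\cos\gamma\bigr)^{1/2}$ in the three upper angles at $x$. From there the paper uses \emph{only} the LAC bound $\alpha+\beta+\gamma\leq 2\pi$ and performs a constrained maximization of the right-hand side $-r\cos\alpha-s\cos\beta$ over the admissible angles, splitting into the cases $\beta+\gamma\leq\pi$ and $\beta+\gamma\geq\pi$ and reducing each to the elementary planar estimate $\bigl(r^2+s^2+2rs\cos\gamma\bigr)^{1/2}\geq r+s\cos\gamma$. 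You instead import the triangle inequality for upper angles, $\gamma\leq\alpha+\beta$ --- which indeed holds in \emph{arbitrary} metric spaces (e.g.\ Burago--Burago--Ivanov, Thm.~3.6.34), not only in Alexandrov spaces, so the appeal is legitimate at this level of generality --- combine it with LAC to confine $\alpha+\beta$ to $[\gamma,2\pi-\gamma]$, deduce $\cos(\alpha+\beta)\leq\cos\gamma$, and close with the case-free estimate $\mathrm{Re}(w)\geq-|w|$ for $w=r\ee^{\rmi\alpha}+s\ee^{-\rmi\beta}$. The trade-off: your finish is slicker and avoids the paper's casework, but it rests on an additional nontrivial external fact whose proof is comparable in length to the proposition itself, whereas the paper's argument is self-contained and, notably, shows the conclusion follows from the upper bound on $\alpha+\beta+\gamma$ alone, without any lower bound on $\alpha+\beta$. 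Do note explicitly, as the paper does, that the degenerate cases $\sfd(x,y)=0$ or $\sfd(x,z)=0$ (where the angles are undefined) are handled by observing that the corresponding terms vanish.
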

\begin{proof}
 We introduce the abbreviations
\[\begin{split}
& a_{y} =\sfd_{\calX}(x,y), \quad a_{z} =\sfd_{\calX}(x,z), \quad a_{\ob} =\sfd_{\calX}(x,\ob), \quad 
\\& \theta_{y x \ob} =\sphericalangle_{\rmu\rmp}(\geo{x y}, \geo{x \ob}), 
\quad\theta_{\ob x z} =\sphericalangle_{\rmu\rmp}(\geo{x z}, \geo{x \ob}), 
\quad \theta_{zxy} =\sphericalangle_{\rmu\rmp}(\geo{x z}, \geo{x y}), 
\end{split}\] 
where $\theta_{y x \ob},\theta_{\ob x z},\theta_{zxy}\in [0,\pi]$ and $\theta_{y x \ob} + \theta_{\ob x z} + \theta_{zxy}\leq 2\pi$
because of  LAC. 

In the case $a_{\ob}=0$ estimate \eqref{eq:lacresult} holds because all three terms
are $0$. Hence, we may assume $a_{\ob}>0$ and after dividing all three terms,
\eqref{eq:lacresult} is equivalent to 
\begin{equation}
  \label{eq:ACEstim.simple}
  \left(a_{y}^2 + a_{z}^2 +2a_{y}a_{z} \cos\theta_{zxy} \right)^{1/2} \geq - a_{y} \cos \theta_{y x \ob}
  - a_{z} \cos \theta_{\ob x z}. 
\end{equation}
To establish this inequality we will maximize the right-hand side (RHS) with respect
to $\theta_{y x \ob}$ and $\theta_{\ob x z}$ . For this we distinguish two cases. Without loss of generality, we
may assume $a_{y},\, a_{z}>0$. 

\underline{Case $\theta_{\ob x z}+\theta_{z x y} \in [0,\pi]$:} For $\theta_{y x \ob} \in [0,\pi]$ the RHS 
is maximized at $\theta_{y x \ob} =\pi$. For $\theta_{\ob x z}\in [0,\pi{-}\theta_{z x y}]$ the
maximum of the simplified RHS is attained at $\theta_{\ob x z}= \pi{-}\theta_{z x y}$. Thus,
it remains to show 
\[
 \left(a_{y}^2 + a_{z}^2 +2a_{y}a_{z} \cos\theta_{z x y} \right)^{1/2}\geq  a_{y} + a_{z}\cos \theta_{z x y}. 
\]
However, this identity follows from elementary arguments in Euclidean
geometry. 

\underline{Case  $\theta_{\ob x z}+\theta_{z x y} \in [\pi,2\pi]$:} For $\theta_{y x \ob} \in [0,\pi]\in
[0,2\pi{-}\theta_{z x y}{-}\theta_{\ob x z}] $ RHS is maximized at $\theta_{y x \ob}=
2\pi{-}\theta_{z x y}{-}\theta_{\ob x z}$ giving $-a_{y}\cos \theta_{y x \ob} = -a_{y} \cos
(\theta_{z x y}{+}\theta_{\ob x z})$.  For $\theta_{\ob x z}\in [\pi{-}\theta_{z x y},\pi]$ maximum of the
simplified RHS is attained at $\theta_{\ob x z}=\pi$. This leads to the remaining
inequality \[ \left(a_{y}^2 + a_{z}^2 +2a_{y}a_{z} \cos\theta_{z x y} \right)^{1/2}\geq  a_{y}\cos
\theta_{z x y} + a_{z},\] which holds as in the other case. 
\end{proof}
\COLORWend
\end{GSproof}

\subsection{Semiconvex and semiconcave functions}
\label{su:SemiCvxCcv}

For a geodesic metric space $(\calX,\sfd_{\calX}),$ we now provide the definition of
$\kappa$-(semi)concavity and $\lambda$-(semi)convexity of a function
$\phi:\calX \to (-\infty,\infty]$ along a geodesic $(\calX,\sfd_{\calX})$ or of a
functional $F:\calX\times \calX \to (-\infty,\infty],$ along some geodesic with
respect to some observer.  For this, we recall that a function
$f:[0,1] \to (-\infty,\infty]$ is called $\kappa$-concave or
$\lambda$-convex, if the mapping $t \mapsto f{-} \kappa t^{2}/2$ is concave
or $t \mapsto f{-} \lambda t^{2}/2$ is convex, respectively. We
emphasize that $\kappa$ and $\lambda$ can lie in all of $\R$. Subsequently, we
will shortly say $\kappa$-concave and $\lambda$-convex.

For a $\kappa$-concave function $f:[0,1] \to (-\infty,\infty]$ we have the
inequality
\begin{equation}
  \label{eq:asasa}
f \text{ $\kappa$-concave}\quad \Longrightarrow \quad \forall\,t\in [0,1] :\   f(t)
+ \frac12 \,\kappa \,t(1{-}t) \geq (1{-}t)\, f(0) + t\, f(1) .  
\end{equation}

For the following definition we recall that the elements of $\Geod(x,y )$ are constant-speed geodesics of length and speed
$\sfd_{\calX}(x,y)$.

\begin{definition}[$\kappa$-concavity/convexity]
\label{de:KCcv.lambdaCvx} 
 Let $(\calX,\sfd_{\calX})$ be a geodesic space, $\A$ and $\B$ subsets of $\calX$,
and $\kappa \in \R$.

(A) A function $\phi: \calX \to  (-\infty,\infty]$ is called 
\emph{$\kappa$-concave (convex) on $\A$}, if for all $x, y\in \A$
there exists an $\geo{x y} \in 
\Geod(x,y)$ such that the function $t \mapsto \phi(\geo{xy}(t))$ is $\kappa\,
\sfd_{\calX}^2(x,y)$-concave (convex). $\phi$ is called \emph{strongly
  $\kappa$-concave (convex) on $\A$}, if the previous condition holds for all $\geo{xy}\in
\Geod(x,y)$. If $\A=\calX$, then we simply that $\phi$ is (strongly)
$\kappa$-concave (convex). 

(B) For $x,y\in\calX,$ we say that a functional
$F:\calX\times\calX \to (-\infty,\infty]$ is $\kappa$-concave (convex) along a
geodesic $\geo{xy}\in \Geod(x,y)$ with respect to the observer
$\ob \in\calX$, if $t \mapsto F(\geo{x y}(t),\ob )$ is
$\kappa\,\sfd_{\calX}^{2}(x,y)$-concave (convex). Furthermore, we say
that $F$ is $\kappa$-concave (convex) in $\A$ with respect to observers from
$\B$, if for every couple of points $x,y \in \A,$ there exists a
geodesic $\geo{xy}\in \Geod(x,y)$ such that for every $\ob \in\B$ we have
that $F$ is $\kappa$-concave (convex) along $\geo{xy}$ with respect to $\ob $.
We finally say that $F$ is \emph{strongly} $\kappa$-concave (convex) in $\A$
  with respect to observers from $\B$, if for every couple of points
  $x,y\in\A$, all geodesics $\geo{xy} \in \Geod(x,y)$, and all
  $\ob \in\B$ the function $F$ is $\kappa$-concave (convex) along $\geo{xy}$ with
  respect to $\ob $.
\end{definition}
In the previous definition, the points $\geo{xy}(t)$ don't have to lie in $\A$ for 
$t\in(0,1)$. Of course, we have $\geo{xy}(t) \in \GeoCov{\A}$ for all $t \in
[0,1]$.

A crucial step in the convergence theory of minimizing movement
solutions is to exploit the $\kappa$-concavity of the squared distance with
respect to suitable observers $\ob$. The point is that one obtains an upper
estimate of the upper right Dini derivative 
$\frac{\rmd^{+}}{\rmd t}\frac{1}{2}\sfd_{\calX}^{2}(\geo{xy}(t),\ob )$ in terms of the two
geometric quantities  $\langle\cdot,\cdot\rangle_{\rmu\rmp}$ and $\invd$
introduced in Definition \ref{de:TwoGeodesics}.  Here the upper right Dini
derivative is defined via
\[ 
\frac{\rmd^+}{\rmd t } \zeta(t):= 
\limsup_{h\downarrow 0} \frac1h(\zeta(t{+}h)-\zeta(t)).
\] 
\begin{GSproof}
\COLORW
We will exploit Propositions \ref{eq:FirVarInnProd} and \ref{pr:LAC.Estim}. 
\COLORWend
\end{GSproof}
\begin{GSreplace}
For the proof of the following result we again refer to \cite{Sava11?GFDS,
  MurSavII+III}. 
\end{GSreplace}

\begin{proposition}[Differentiation of squared distance along geodesics]
\label{pr:EstimDerDist}
If\/ $(\calX,\sfd_{\calX})$  is a geodesic space and $x,y\in \calX$ such
that $F=\frac12 \sfd^2_{\calX}$  is $\kappa$-concave along
$\geo{x y}\in \Geod(x,y)$ with respect to an observer $\ob $,
then we have
\begin{equation}\label{akas}
\frac{\rmd^{+}}{\rmd t} \frac{1}{2}\sfd_{\calX}^{2}(\geo{x y}(t),\ob )\leq
-\langle\geo{x y},\geo{x \ob}\rangle_{\rmu\rmp}
+\kappa\sfd_{\calX}(x,y)\sfd_{\calX}(\geo{x y}(t),x)  \text{ a.e.\ on }[0,1].
\end{equation}
If furthermore $(\calX,\sfd_{\calX})$ satisfies LAC at $x,$ then for a.a.\ $t\in [0,1]$ we have 
\begin{equation}
  \label{eq:DerivEstim}
    \frac{\rmd^{+}}{\rmd t}\frac{1}{2}\sfd_{\calX}^{2}(\geo{x y}(t),\ob )
 \leq\langle\geo{x z},\geo{x \ob}\rangle_{\rmu\rmp} +
    \sfd_{\calX}(x,\ob )\invd(\geo{x y},\geo{x z})
    + t\!\; \kappa\,\sfd_{\calX}^{2}(x,y), 
\end{equation}
for every $z\in\calX$ and $\geo{x z} \in  \Geod(x,z).$
\end{proposition}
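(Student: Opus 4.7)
The plan is to combine two ingredients that are already developed in the abstract theory of \cite{Sava11?GFDS, MurSavII+III}, namely the first-variation inequality for the squared distance along a geodesic emanating from $x$, and the Cauchy--Schwartz-type inequality for inner products of geodesics at a LAC point, together with standard properties of $\kappa$-concave real functions. First I would set $g(t) := F(\geo{x y}(t),\ob) = \tfrac12\sfd^2(\geo{x y}(t),\ob)$; by hypothesis and Definition \ref{de:KCcv.lambdaCvx}(B), $g$ is $\kappa \sfd^2(x,y)$-concave on $[0,1]$, so $t\mapsto g(t) - \tfrac12 \kappa\sfd^2(x,y)\,t^2$ is genuinely concave. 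Classical facts about concave real functions then ensure that the upper-right Dini derivative $g'_+$ exists at every $t\in[0,1)$, coincides with the ordinary derivative outside a countable set, and that $t\mapsto g'_+(t) - \kappa\sfd^2(x,y)\,t$ is non-increasing. In particular
\begin{equation*}
g'_+(t) \;\leq\; g'_+(0) + \kappa\sfd^2(x,y)\,t \qquad\text{for every } t\in[0,1).
\end{equation*}
The first-variation estimate for the squared distance from $\ob$ along $\geo{x y}$ then gives $g'_+(0) \leq -\langle\geo{x y},\geo{x \ob}\rangle_{\rmu\rmp}$, and combining this with $\sfd(\geo{x y}(t),x) = t\sfd(x,y)$ produces \eqref{akas} at once.

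For \eqref{eq:DerivEstim} the only additional input is a way to trade $-\langle\geo{x y},\geo{x \ob}\rangle_{\rmu\rmp}$ for an expression involving the auxiliary geodesic $\geo{x z}$. Under LAC at $x$, the Cauchy--Schwartz-type inequality applied to the three geodesics $\geo{x y},\, \geo{x \ob},\, \geo{x z}$ yields
\begin{equation*}
\langle\geo{x y},\geo{x \ob}\rangle_{\rmu\rmp} + \langle\geo{x \ob},\geo{x z}\rangle_{\rmu\rmp} \;\geq\; -\sfd(x,\ob)\,\invd(\geo{x y},\geo{x z}),
\end{equation*}
and by the symmetry of $\langle\cdot,\cdot\rangle_{\rmu\rmp}$ (inherited from that of the upper angle) this rewrites as
\begin{equation*}
-\langle\geo{x y},\geo{x \ob}\rangle_{\rmu\rmp} \;\leq\; \langle\geo{x z},\geo{x \ob}\rangle_{\rmu\rmp} + \sfd(x,\ob)\,\invd(\geo{x y},\geo{x z}).
\end{equation*}
Substituting this bound into \eqref{akas} immediately yields \eqref{eq:DerivEstim}.

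The argument is essentially assembly once the two abstract ingredients are in hand. The only piece of bookkeeping that needs care is the monotonicity step for $g'_+$, which amounts to applying the classical statement for concave real functions to the shifted function $g(t) - \tfrac12 \kappa\sfd^2(x,y)\,t^2$; the ``a.e.''\ qualifier in the statement is natural because, although $g'_+$ exists everywhere on $[0,1)$, $g$ itself fails to be classically differentiable on at most a countable subset of $[0,1]$, so the passage from $g'_+$ to an ordinary derivative costs us a null set.
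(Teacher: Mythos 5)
Your proposal is correct and follows essentially the same route as the paper's proof: both deduce \eqref{akas} by combining the first-variation inequality $g'_+(0)\leq -\langle\geo{x y},\geo{x \ob}\rangle_{\rmu\rmp}$ (Proposition \ref{pr:FirVarInnProd}) with the monotonicity of the one-sided derivative of the $\kappa\sfd^2(x,y)$-concave map $t\mapsto \frac12\sfd^2(\geo{x y}(t),\ob)$, and then obtain \eqref{eq:DerivEstim} by inserting the LAC Cauchy--Schwartz estimate \eqref{eq:lacresult}. The only difference is cosmetic: you invoke the classical fact that the right derivative of a concave function is non-increasing, whereas the paper proves that monotonicity by hand via the three-point inequality \eqref{eq:asasa} applied to the reparametrized geodesics $\geo{x y}(ts)$ and $\geo{x y}((1{-}t)s)$.
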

\begin{GSproof}
\COLORW
\begin{proof}
  For $s\in(0,1)$ we set $x_{s}=\geo{x y}(s)$ and observe that the curves
  $\xx_{0,s}(t)=\geo{x y}(ts)$ and $\xx_{s,0}(t)=\geo{x y}((1{-}t)s)$ belong to
  $\Geod(x,x_{s}),$ and $\Geod(x_{s},x)$ respectively.  Using the
  $\kappa$-concavity of $t \mapsto \frac12\sfd_{\calX}^2(\geo{x y}(t),\ob)$, a simply
  scaling argument shows that $t \mapsto \frac12 \,
  \sfd_{\calX}^2(\geo{x y}(st),\ob)$ is $s^2\kappa$-concave. By  applying
  \eqref{eq:asasa} for  $\xx_{0,s}$, we have
\begin{equation}
  \label{eq:co}
  \sfd_{\calX}^{2}(\xx_{0,s}(t),\ob )+2\kappa t(1{-}t)\sfd^{2} 
  \left(x ,x_{s}\right) \geq  (1{-}t) \sfd_{\calX}^{2}(x,\ob )
  +	t\,\sfd_{\calX}^{2}(x_{s},\ob ).
\end{equation}
from which we obtain
\begin{equation*}
  \sfd_{\calX}^{2}(x_{s},\ob )-\sfd_{\calX}^{2}(x,\ob )\leq
  2\kappa(1{-}t)\sfd_{\calX}^{2}\left(x ,x_{s}\right)+ \frac{
    \sfd_{\calX}^{2}(\xx_{0,s}(t),\ob ) - \sfd_{\calX}^{2}(x,\ob ) }{t}.  
\end{equation*}
By sending $t$ to $1$, we obtain 
\begin{equation}
  \label{xa8hka}
  \begin{split}
    \sfd_{\calX}^{2}(x_{s},\ob )-\sfd_{\calX}^{2}(x,\ob )&\leq
    2\kappa\sfd_{\calX}^{2}\left(x ,x_{s}\right) + s\limsup_{t\downarrow 0}
    \frac{\sfd_{\calX}^{2}(\geo{x y}(ts),\ob ) -\sfd_{\calX}^{2}(x,\ob
      )}{ts}\\& \leq 2\kappa\sfd_{\calX}^{2}\left(x ,x_{s}\right) -
    2s\langle\geo{x y},\geo{x \ob}\rangle_{\rmu\rmp},
  \end{split}
\end{equation}
where we used the estimate \eqref{eq:FirVarInnProd} from Proposition
\ref{pr:FirVarInnProd}.  
	
In analogy to \eqref{eq:co}, by replacing $\xx_{0,s}$ through $\xx_{s,0}$ we have   
\begin{equation}\label{eq:co2}
	\sfd_{\calX}^{2}(\xx_{s,0}(t),\ob )+2\kappa t(1{-}t)\sfd_{\calX}^{2}\left(x
          ,x_{s}\right) \geq  t\,\sfd_{\calX}^{2}(x,\ob ) +
        (1{-}t)\sfd_{\calX}^{2}(x_{s},\ob ).
\end{equation}
 Rearranging the terms and dividing by $ts>0$ we find 
\[
\frac	{\sfd_{\calX}^{2}(x_{s},\ob
          )-\sfd_{\calX}^{2}(\geo{xy}((1{-}t)s),\ob )}{ts}\leq
        \frac{2\kappa(1{-}t)\sfd_{\calX}^{2}\left(x ,x_{s}\right)+
            \sfd_{\calX}^{2}(x_{s},\ob ) -\sfd_{\calX}^{2}(x,\ob )
          }{s}\,. 
\]
 Inserting \eqref{xa8hka} on the right-hand side and taking the limit
$t\downarrow 0$ yields and estimate for the left upper Dini derivative  
\begin{align*}
\frac{\rmd^-}{\rmd s} \frac12\sfd_{\calX}^2(\geo{x y}(s),\ob)&= 
 \limsup_{t\downarrow 0}	\frac	{\sfd_{\calX}^{2}(\geo{x y}(s),\ob) - 
  \sfd_{\calX}^{2}(\geo{xy}(s{-}ts),\ob )}{ts} \\
&\leq 
  \frac{2\kappa\sfd_{\calX}^{2}(x,\geo{x y}(s))
-2s\langle\geo{x y},\geo{x \ob}\rangle_{\rmu\rmp} }{2s} 
\\
&= \kappa\,  \sfd_{\calX}(x,\xx_1)\,\sfd_{\calX}(x,\geo{x y}(s)) 
- \langle\geo{x y},\geo{x \ob}\rangle_{\rmu\rmp}.
\end{align*}
Since $s\mapsto \sfd_{\calX}^{2}(\geo{x y}(s),\ob )$ is concave, it is
also  Lipschitz. Hence, we have
$$ \frac{\rmd^{-}}{\rmd t} \sfd_{\calX}^{2}(\geo{x y}(t),\ob )=
\frac{\rmd^{+}}{\rmd t}\sfd_{\calX}^{2}(\geo{x y}(t),\ob ) \hspace{8pt}
\text{a.e. in}\hspace{8pt}
 (0,1),$$ which is the desired estimate \eqref{akas}.

Finally, \eqref{eq:DerivEstim} follows by inserting \eqref{eq:lacresult}
from Proposition \ref{pr:LAC.Estim} into \eqref{akas} and using the trivial
relation $\sfd_{\calX}(\geo{x y}(t),x) = t \sfd_{\calX}(x,y)$. 
\end{proof}
\COLORWend
\end{GSproof}

For an illustration of the estimates in Proposition \ref{pr:EstimDerDist} we
refer to the simple case discussed in Example \ref{ex:3/4disc}.

\subsection{EVI$_\lambda$ and construction of solutions}
\label{su:EVI.Sols}

We first recall the standard  definitions and results from
\cite[Sec.\,3]{MurSav20GFEV}  and then introduce our notations.

We have the following theorem that provides an alternative form of
\eqref{EVI-diff} that uses integration instead of differentiation. This form
can, in a straightforward manner, be combined with the lower semicontinuity
properties of the distance  $\sfd_{\calX}$ and of $\phi$, thus allowing to
show that various limits of EVI$_\lambda$ solutions are again
EVI$_\lambda$ solutions. 

The two results given in the next theorem are taken from
\cite[Thm.\,3.3+3.5]{MurSav20GFEV}.

\begin{proposition}[Characterizations and properties of EVI solutions]
\label{pr:EVI-int} \mbox{ }\\
\indent (A)  A curve $\xx:[0,T) \to \calX$ satisfies
  EVI$_\lambda$ with respect to $\phi$, if and only if  for all
  $\ob \in \mathrm{dom}(\phi)$ the two maps
  $t\mapsto \phi(\xx(t))$ and $t \mapsto \sfd^{2}(\xx(t),\ob )$ belong
  to $\rmL^1_\mathrm{loc}((0,T))$ and 
\begin{align}
\nonumber
\frac{1}{2}\sfd_{\calX}^{2}(\xx(t),\ob )-\frac{1}{2}\sfd_{\calX}^{2}(\xx(s),\ob )
 +\int_{s}^{t}\!\!\Big(\phi(\xx(r)){+}\frac{\lambda}{2}\sfd_{\calX}^{2}(\xx(r),\ob
   ) \Big)  \,\rmd r &\leq (t{-}s)\phi(\ob )
\\
\label{eq:EVI.inte}
 \text{for all } s,t\in (0,T) &\text{ with }s<t. 
\end{align}

 (B) If $\xx:[0,T) \to \calX$ is an EVI$_\lambda$ solution, then $t\mapsto
\phi(\xx(t))$ is non-increasing and hence continuous from the right (by lsc of
$\phi$).
 
(C) If $\xx^1,\xx^2:[0,\infty) \to \calX$ are EVI$_\lambda$ 
solutions, then we have 
\begin{equation}
\label{eq:expanding}
 \sfd_{\calX}(\xx^{1}(t),\xx^{2}(t))\leq
\ee^{-\lambda(t-s)}\sfd_{\calX}(\xx^{1}(s),\xx^{2}(s))  
\quad  \text{for all } s,t\in [0,T) \text{ with } s < t.
\end{equation}
\end{proposition}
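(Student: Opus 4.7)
The plan is to prove the three claims (A), (B), (C) in order, since (A) provides the integrated reformulation that is the main workhorse for the other two parts.

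For (A), the forward implication (differential $\Rightarrow$ integrated) rests on a Dini-derivative version of the fundamental theorem of calculus. The function $t \mapsto \frac{1}{2} \sfd^2(\xx(t), \ob)$ is continuous (by continuity of $\xx$), and once $t \mapsto \phi(\xx(t))$ is known to be locally integrable, the pointwise bound \eqref{eq:EVI.diff} on the upper right Dini derivative of
\[
F(t) := \tfrac{1}{2} \sfd^2(\xx(t), \ob) + \int_{t_0}^{t} \bigl( \phi(\xx(r)) + \tfrac{\lambda}{2}\sfd^2(\xx(r), \ob)\bigr)\,\rmd r
\]
integrates to $F(t) - F(s) \leq (t{-}s)\phi(\ob)$, which is \eqref{eq:EVI.inte}. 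The reverse implication proceeds by dividing \eqref{eq:EVI.inte} by $t{-}s$ and sending $t \downarrow s$: lower semicontinuity of $\phi$, continuity of the distance, and Lebesgue differentiation of the integral average recover the pointwise inequality \eqref{eq:EVI.diff}.

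For (B), I would apply the integrated form from (A) on $[s,t]$ with observer $\ob := \xx(s)$. The boundary term $\frac{1}{2}\sfd^2(\xx(s), \xx(s))$ vanishes, so rearranging and using lower semicontinuity of $\phi$ together with continuity of $\xx$ gives
\[
\phi(\xx(s)) \leq \liminf_{t \downarrow s}\tfrac{1}{t-s}\int_s^t \phi(\xx(r))\,\rmd r \leq \phi(\xx(s)),
\]
so the average is right-continuous at $s$. A symmetric application with observer $\ob := \xx(t)$ on $[s,t]$, combined with the nonnegativity of $\sfd^2$, then forces $\phi(\xx(t)) \leq \phi(\xx(s))$ for all $s < t$. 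Right-continuity of $t \mapsto \phi(\xx(t))$ follows from monotonicity together with the lower semicontinuity of $\phi$ composed with the continuous curve $\xx$.

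For (C), I plan to use the doubling-of-variables strategy. Set $\eta(t,r) := \frac{1}{2}\sfd^2(\xx^1(t), \xx^2(r))$; the differential EVI for $\xx^1$ with fixed observer $\xx^2(r)$ gives an upper bound on $\partial_t^+ \eta(t,r)$, and symmetrically the EVI for $\xx^2$ with observer $\xx^1(t)$ bounds $\partial_r^+ \eta(t,r)$. To convert these into an inequality for $\zeta(t) := \sfd^2(\xx^1(t), \xx^2(t))$, I will work entirely in the integrated form from (A): summing the integrated EVI for $\xx^1$ with observer $\xx^2(s_k^n)$ and for $\xx^2$ with observer $\xx^1(s_k^n)$ along a sequence of partitions $s_0^n < s_1^n < \dots < s_{N_n}^n$ of $[s,t]$, the potential terms $\phi(\xx^1(\cdot))$ and $\phi(\xx^2(\cdot))$ cancel across the sum, and after refining the partition one obtains $\zeta(t) - \zeta(s) + 2\lambda \int_s^t \zeta(r)\,\rmd r \leq 0$. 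A Gronwall argument then yields the contraction \eqref{eq:expanding}.

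The main technical obstacle across all three parts is the rigorous handling of upper right Dini derivatives, which obey neither a sum rule nor a chain rule naively. This is why (A) is proven first and used as the basic tool throughout: within the integrated formulation \eqref{eq:EVI.inte} standard measure-theoretic devices (Lebesgue differentiation, monotone and dominated convergence, Gronwall in integral form) apply directly. In (C) the subtlest step is the partition-refinement limit needed to symmetrize the two EVIs and to justify that the cross observer-terms collapse onto the diagonal $r = t$; controlling this passage uses uniform continuity of the $\xx^i$ on compact subintervals together with the $\rmL^1_{\mathrm{loc}}$ regularity of $\phi \circ \xx^i$ established in (B).
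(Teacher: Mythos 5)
The paper does not prove this proposition at all: it is quoted verbatim from Muratori--Savar\'e \cite[Thms.\,3.3, 3.5]{MurSav20GFEV}, so there is no in-paper argument to compare against. Your outline follows the standard route for these facts (integrated reformulation, monotonicity of the energy, doubling of variables plus Gronwall), but two steps as written have genuine gaps.

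First, in (A) you prove the forward implication ``once $t\mapsto\phi(\xx(t))$ is known to be locally integrable'' --- but that local integrability is itself part of the assertion, and it is not free: from the differential EVI one only knows $\phi(\xx(t))<\infty$ pointwise. In the standard treatment, measurability and local boundedness of $\phi\circ\xx$ are obtained from (a version of) the monotonicity in (B), which you in turn derive from (A). As organized, your argument is circular at exactly this point; you need to establish the monotonicity (or at least local boundedness from above of $\phi\circ\xx$) directly from the differential form before the integration step in (A) is licensed.

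Second, the mechanism you propose for (B) does not close. The integrated EVI on $[s,t]$ with observer $\ob=\xx(t)$ reads
\[
-\tfrac12\sfd^{2}(\xx(s),\xx(t))+\int_{s}^{t}\Bigl(\phi(\xx(r))+\tfrac{\lambda}{2}\sfd^{2}(\xx(r),\xx(t))\Bigr)\rmd r\le (t{-}s)\,\phi(\xx(t)),
\]
which bounds $\phi(\xx(t))$ from \emph{below} by the average of $\phi\circ\xx$ minus correction terms --- the wrong direction for concluding $\phi(\xx(t))\le\phi(\xx(s))$. The observer-$\xx(s)$ inequality likewise only gives a lower bound on $\phi(\xx(s))$ by that same average. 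Combining a lower bound on $\phi(\xx(t))$ with a lower bound on $\phi(\xx(s))$ yields nothing; the only available upper control on $\phi(\xx(t))$ is through lower semicontinuity, which produces at best the infinitesimal statement $\phi(\xx(t))\le\liminf_{s\uparrow t}\phi(\xx(s))$ rather than global monotonicity. The actual proof in \cite{MurSav20GFEV} goes through showing that $\phi\circ\xx$ agrees a.e.\ with a non-increasing function (via the a priori regularization and energy-dissipation estimates) and only then upgrades to everywhere monotonicity using lsc. Your sketch of (C) is the correct standard doubling-of-variables argument and is fine in outline, modulo the fact that it relies on the $\rmL^1_{\mathrm{loc}}$ regularity whose derivation is the gap above.
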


We are now able to formulate our main existence result for EVI$_\lambda$
solutions for metric gradients systems $(\calX,\sfd_{\calX},\phi)$ which relies on
the geodesic structure of $(\calX,\sfd_{\calX})$, the $\lambda$-convexity of the
potential $\phi$, and some ``local $\kappa$-concavity'' of
$\frac12\sfd_{\calX}^2(t,\ob)$. The construction of solutions will
be done by the minimizing movement scheme and geodesic interpolation. 
For a given $x\in \calX$ and a time step $\tau$ the discrete solutions
$(x^\tau_n)_{n=0,1,...,N}$ of the minimization movement schemes are defined via 
\begin{equation}
  \label{eq:MiniMove}
   x^\tau_0=x \quad \text{and} \quad x^\tau_n \in
 \arg\min\Bigset{\frac1{2\tau}\sfd_{\calX}^2(x,x^\tau_{n-1}) + \phi(x) }{x
   \in \calX} \text{ for } n \in \N.
\end{equation}
As all our $\phi$ are $\lambda$-convex for some $\lambda\in \R$, the functional
$x\mapsto \frac1{2\tau}\sfd_{\calX}^2(x,x^\tau_{n-1}) + \phi(x)$ is quadratically
bounded from below for all $\tau>0$ with $\lambda +1/\tau>0$. Thus under
suitable assumptions on $\phi$ minimizers exist for sufficiently small $\tau$.

\begin{theorem}[Existence of EVI$_\lambda$ solutions]
\label{th:MainExistTheo} 
Let $(\calX,\sfd_{\calX})$ be a geodesic metric space and
$\phi:\calX \to (-\infty,\infty]$ a strongly $\lambda$-convex functional with
compact sublevels. Further assume that there exists a 
nested sequence of sets $\A_{\kappa}\subset\dom(|\partial\phi|)$ with
$\overline{\cup \A_{\kappa}}=\overline{\dom(\phi)},$ for which the statements (A1)
to (A3) are true:
\begin{enumerate}

\item[(A1)] For every $\kappa\in\mathbb{N},$ we have that
  $\frac12\sfd_{\calX}^{2}:\calX\times\calX \to \R ^{+}$ is strongly $\kappa$-concave in $\A_{\kappa}$
  with observers in $\GeoCov{\A}_{\kappa},$ and satisfies LAC for every
  point in $\A_{\kappa}.$

\item[(A2)] For all $x_0 \in \cup_{\kappa \in \N} \A_\kappa$ there exists
  $\kappa_0$ such that for all $\kappa >\kappa_0$ there exists
  $T(x_0,\kappa)>0$ such that for all time steps $\tau>0$ the $n$-step
  minimization scheme $(x^\tau_k)_{k=0,...,n}$ remains in $\A_\kappa$ as long
  as $n < T/\tau+1$. 

\item[(A3)] For every $x\in \overline{\dom(\phi)},$ there exists a sequence
  $x_{m}\in \cup \A_{\kappa}$ with $x_{m} \to x$ and
  $\phi(x_{m}) \to {\phi(x)}.$
\end{enumerate}
Then, for every $x_0\in \cup \A_{\kappa}$ there exists a unique EVI$_\lambda$
solution $\xx:[0,T_\infty(x_0) ) \to \calX$ with $\xx(0)=x_0$, 
where $T_\infty (x_0)= \lim_{\kappa \to \infty} T(x_0,\kappa)$ (w.l.o.g.\ 
$\kappa \mapsto T(x_0,\kappa)$ is non-decreasing). 

If additionally, assumption (A2) holds for all $x_0\in \cup \A_{\kappa}$ with 
$T_\infty(x_0)=\infty$, then for
all $\widehat x_0 \in \overline{\dom(\phi)}$ there exists a complete 
EVI$_\lambda$ solution $\xx$ with $\xx(0)=\widehat x_0$. 
\end{theorem}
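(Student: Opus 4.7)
The plan is to adapt the Minimizing Movement construction of EVI solutions developed in \cite{Sava11?GFDS, MurSavII+III} to the localized $\kappa$-concavity setting of (A1)--(A2). First, I fix $x_0\in \cup_{\kappa} A_\kappa$, pick $\kappa_0$ and $\kappa>\kappa_0$ from (A2), set $T=T(x_0,\kappa)$, and for every small $\tau>0$ generate the discrete sequence $(x_n^\tau)_{n\le T/\tau}$ via \eqref{eq:MiniMove}. Existence of these minimizers follows from strong $\lambda$-convexity together with compact sublevels of $\phi$, and (A2) keeps the whole sequence inside $A_\kappa$. Piecewise geodesic interpolation then yields a curve $\xx^\tau:[0,T)\to \calX$ whose trace is contained in $\GeoCov{A_\kappa}$.

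Next, I would derive a discrete analog of \eqref{eq:EVI.inte}. For any observer $\ob\in A_\kappa$, assumption (A1) supplies both $\kappa$-concavity of $\tfrac12\sfd^2$ along $\geo{x^\tau_{n-1} x^\tau_n}$ with respect to $\ob$ and the LAC at $x^\tau_{n-1}$, so Proposition \ref{pr:EstimDerDist} applies on every segment. Combining its conclusion with the Euler--Lagrange inequality coming from the minimality of $x^\tau_n$ (tested against an arbitrary competitor) produces, after summing over $n$, a $\tau$-discretized version of \eqref{eq:EVI.inte}. The classical MM energy estimate $\sum_n \sfd^2(x^\tau_{n-1},x^\tau_n) \leq 2\tau(\phi(x_0)-\inf\phi)$ is what controls the $\kappa$-remainder in Proposition \ref{pr:EstimDerDist} uniformly in $\tau$.

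Then I pass $\tau\downarrow 0$: the a priori bounds, together with compactness of sublevels of $\phi$, let me extract by an Ascoli-type argument a subsequence $\xx^\tau$ converging uniformly on compact subsets of $[0,T)$ to a limit $\xx$. Stability of the integrated EVI inequality \eqref{eq:EVI.inte} under such convergence, combined with the lower semicontinuity of $\phi$ and the continuity of $\sfd^2$, yields that $\xx$ is an EVI$_\lambda$ solution on $[0,T)$ with $\xx(0)=x_0$. Uniqueness on $[0,T)$ is immediate from \eqref{eq:expanding}, and letting $\kappa\to\infty$ (using monotonicity of $\kappa\mapsto T(x_0,\kappa)$) glues these nested local solutions together into the unique solution on $[0,T_\infty(x_0))$. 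For the final assertion, once (A2) grants $T_\infty(x_0)=\infty$ for every $x_0\in \cup A_\kappa$, given $\widehat x_0\in\overline{\dom(\phi)}$ I use (A3) to pick $x_m\in\cup A_\kappa$ with $x_m\to\widehat x_0$ and $\phi(x_m)\to\phi(\widehat x_0)$; the contraction \eqref{eq:expanding} makes the associated EVI solutions $\xx^m$ Cauchy in $C([0,T];\calX)$ for every $T$, and passing to the limit in \eqref{eq:EVI.inte} delivers the complete EVI$_\lambda$ solution emanating from $\widehat x_0$.

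The main obstacle lies in the second step: Proposition \ref{pr:EstimDerDist} contributes a remainder proportional to $\kappa\,\sfd^2(x^\tau_{n-1},x^\tau_n)$ on each segment, and I must verify that the cumulative contribution $\kappa\sum_n \sfd^2(x^\tau_{n-1},x^\tau_n)\sim \kappa\tau(\phi(x_0)-\inf\phi)$ vanishes appropriately as $\tau\downarrow 0$ while $\kappa$ is held fixed, \emph{without} invoking any global concavity bound. Carefully tracking the degradation of these constants as $\kappa\to\infty$ is precisely what restricts the construction to the horizon $T_\infty(x_0)$ rather than all of $[0,\infty)$, and forces the additional hypothesis on $T_\infty$ in the statement of the second claim.
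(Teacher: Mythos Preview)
Your overall architecture matches the paper's, but there are two substantive gaps in the error analysis of the second step.

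First, the remainder coming out of Proposition \ref{pr:EstimDerDist} is not only $\kappa\,\sfd^2(x^\tau_{n},x^\tau_{n+1})$. To connect the derivative along the forward segment $\geo{x^\tau_n x^\tau_{n+1}}$ with the Euler--Lagrange inequality \eqref{eq:EulerEstim1}, which involves the \emph{backward} geodesic $\geo{x^\tau_n x^\tau_{n-1}}$, you must invoke the LAC at $x^\tau_n$ (not $x^\tau_{n-1}$) and pass through \eqref{eq:DerivEstim}. This produces the additional term $\sfd(x^\tau_n,\ob)\,\invd(\geo{x^\tau_n x^\tau_{n-1}},\geo{x^\tau_n x^\tau_{n+1}})$; after Young's inequality the error $\Er^\tau_n$ in \eqref{error} carries $\tfrac{1}{\tau^2}\invd^2(\geo{x^\tau_n x^\tau_{n-1}},\geo{x^\tau_n x^\tau_{n+1}})$. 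This is \emph{not} controlled by the classical energy estimate $\sum_n \sfd^2\le 2\tau(\phi(x_0)-\inf\phi)$. The paper handles it in Lemma \ref{le:error.estimate} by a telescoping argument (choosing $\ob=x^\tau_{n+1}$ in \eqref{eq:EulerEstim1}) combined with the slope bounds \eqref{Euler}, which give $\sfd(x^\tau_{n-1},x^\tau_n)\lesssim \tau|\partial\phi|(x_0)$. This is precisely why the hypothesis places $A_\kappa\subset\dom(|\partial\phi|)$, and why the final error bound \eqref{error.estim2} involves $|\partial\phi|^2(x_0)$ rather than $\phi(x_0)-\inf\phi$.

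Second, two points you omit: (i) the paper does not use an Ascoli argument but shows the family $(\xx^\tau)$ is \emph{Cauchy} in $C([0,T];\calX)$, by letting two interpolants observe each other (Lemma \ref{curves observing each other} and Corollary \ref{co:error}) and invoking the $L^1$ error bound of Lemma \ref{le:error.estimate}; this requires observers in $\GeoCov{A_\kappa}$, exactly what (A1) grants. (ii) The discrete estimate \eqref{evi.for.interpol} only yields EVI$_{\last}$ with the degraded $\last=2\min\{\lambda,0\}-2$, and only for observers $\ob\in\GeoCov{A_\kappa}$. To reach arbitrary $\ob\in\overline{\dom(\phi)}$ the paper uses (A3) to approximate $\ob$ by $y_m\in A_{\kappa_m}$ and performs the double limit $k\to\infty$ first (killing $\tau_k\kappa_m$), then $m\to\infty$. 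The upgrade from EVI$_{\last}$ back to EVI$_\lambda$ is a separate final step via \cite[Cor.\,3.12]{MurSav20GFEV}.
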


We note that our assumption $x_0 \in \cup \A_{\kappa}$ trivially implies $
x_0\in \mafo{dom}(\pl\phi)$, i.e.\ $|\partial \phi|(x_{0}) < \infty$. For
$\lambda$-convex functionals on always has
$\ol{\mafo{dom}(\pl\phi)}=\ol{\mafo{dom}(\phi)} $, hence general initial
conditions in $\ol{\mafo{dom}(\phi)} $ can be approximated. 

The proof will be completed in Section \ref{su:ProofMainExist} after the
necessary a priori estimates for the discrete minimizing movement solutions are
collected next.

\subsection{Examples and counterexamples for LAC and semiconcavity}
\label{su:Exa.LAC.k-concave}

To help the intuition about the two important geometric conditions for
$(\calX,\sfd)$ we provide some examples and counterexamples for LAC and for
semiconcavity of $\frac12 \sfd(\ob,\cdot)^2$. 

Clearly, it can be easily verified that a Hilbert space satisfies the LAC and
the squared norm is $\kappa$-concavity with $\kappa=1$ (as well as
$\lambda$-convexity with $\lambda=1$). The properties are still valid on convex
subsets of a Hilbert space. More generally, on any smooth Riemannian manifold
LAC is satisfied and $\kappa$-concavity holds on compact sets.

\begin{example}[LAC and semiconcavity not satisfied]\slshape 
\label{ex:LAC.SC.Cross}
We consider the ``cross'' 
\[
M = \bigset{x=(x_1,x_2)\in \R}{ x_1=x_2=0} = \big( \R{\ti}\{0\}\big) \cup \big(
\{0\} {\ti} \R\big) 
\]
together with the distance 
\[
\sfd (x,y) = |x_1{-}y_1| + | x_2{-}y_2| = \|x{-}y\|_1,
\]
which is also the induced metric obtained by restricting the Euclidean distance
in $\R^2$. 

We may consider the following three geodesics starting from $x_*=0$: 
\[
\gamma^{(1)}(s) =(s,0) , \quad \gamma^{(2)}(s)=(-s,0), \quad \gamma^{(3)}(s) =
(0,-s).
\]
Since the (unique) geodesic connecting two points from two different geodesics
always has to pass through $x_*=0$ we easily see that all three angles
$\sphericalangle(\gamma^{(i)}, \gamma^{(j)})$ for $i\neq j$ are equal to $\pi$,
  which violates LAC. 

Moreover, the unique arclength-parametrized  geodesic connecting $y_*=(-1,0)$ and
$z_*=(0,1)$ is given by 
\begin{equation}
  \label{eq:Geod.y*.z*}
\geo{y_*z_*}(s) = \gamma^{(1)}(1{-}s) \text{ for } s\in [0,1] \quad \text{and} 
\quad \geo{y_*z_*}(s) = \gamma^{(3)}(s{-}1) \text{ for } s\in [1,2].
\end{equation}
Hence, choosing the observer $\ob=(1,0)$ we obtain $\sfd(\ob,\geo{y_*z_*}(s))^2
= \max\{(2{-}s)^2,s^2\}$, which is not semiconcave.
\end{example}

The next example shows that LAC and semiconcavity are not to be expected in
general Banach spaces, unless the squared norm is semiconcave. 

\begin{example}[LAC and semiconcavity in Banach spaces]\slshape
\label{ex:LAC.SC.Banach}
LAC is satisfied in a Banach space if and only if it is a Hilbert space, see
\cite[p.\,153]{Sava07GFDS}.  

We consider the Banach space $\R^2$ with the norm
$\|x\|_p=\big(|x_1|^p{+}|x_2|^p\big)^{1/p}$. 

For $p=1$ we may consider the same geodesics $\gamma^{(i)}$ as in the previous
example and obtain   $\sphericalangle(\gamma^{(i)}, \gamma^{(j)})=\pi$ for
$i\neq j$ are equal to $\pi$, which violates LAC. For the points
$\ob,\,y_*,z_*$ as in the previous example $\geo{y_*z_*}$ given in
\eqref{eq:Geod.y*.z*} is still one of the many geodesics in $\Geod(y_*,z_*)$,
and we have $\|\ob{-}\geo{y_*z_*}(s)\|_1^2= \min\{(2{-}s)^2, 1+(s{-}1)^2\}$
which is not semiconcave.  

The case $p=\infty$ is analogous to the case $p=1$. For $p \in {]1,2[}$ it can
be shown that local angles are not defined in the strict sense and that
semiconcavity holds. The case $p=2$ is the Hilbertian case with LAC and
$1$-concavity. 

For $p\in {]2,\infty[}$ one still has semiconcavity because
$\mafo{sn}(x)=\frac12\|x\|_p^2$ is lies in $\rmC^1(\R^2)$ with
$\mafo{Lip}(\nabla \mafo{sn})= c_p\leq p$, i.e.\ we have $p$-concavity. 

For general Banach spaces $(B,\|\cdot\|_B)$, one obtains semiconcavity if the
$\mafo{sn}(x) = \frac12\|x\|_B^2$ is differentiable with a globally
Lipschitz-continuous derivative. 
\end{example}

The next example shows a case where semiconcavity and LAC are present,
but vanish if certain parameter of the space are taken to a nontrivial limit.

\begin{example}[A smoothed three-quarters disc]\slshape
\label{ex:3/4disc} For $0\leq r<R\leq \infty$, we consider the nonconvex
two-dimensional domain  
\[
X_{r,R}:=\bigset{x\in \R^2}{ |x|\leq R \text{ and }\big(x_1\geq 0\text{ or }
  x_2\geq 0\big) } \cup \bigset{x\in [-r,0]^2}{ |x{+}(r,r)|\geq r} ,
\]
where $|\cdot|$ denotes the Euclidean distance, see also Figure
\ref{fig:3/4disc}. The distance $\sfd$ is given by the length of the shorted
curve inside of $X_{r,R}$. It is easy to see that all geodesics are unique and
are either straight lines or they touch the circle of radius $r$ around
$(-r,-r)$. Obviously, the geodesics connecting $x_*=(-R,0)$ and
$y_*=(R,0)$ is the straight line $\gamma^{(1)}(s)=(R{+}s,0)$ for $s\in [0,2R]$,
and the geodesics $\gamma^{(2)}= \geo{x_*z_*}$ connecting $x_*$ and
$z_*=(0,-R)$ coincides with $\gamma^{(1)}$ for $s\in [0,R{-}r]$, then branches
of tangentially to stay on the circle for $s\in [R{-}r,R+(\frac\pi2{-}1)r]$ and
then moves down vertically for $s\in [R+(\frac\pi2{-}1)r,2R+(\frac\pi2{-}2)r]$.  

For $r>0$ the LAC is always satisfied, and it can be shown that the
$\frac12\sfd^2(\ob,\cdot)$ is $\kappa$-concave with $\kappa =\sqrt2 +
R/r$. The most extreme case is achieved by observing $\gamma^{(2)}$ from
$\ob=(R/\sqrt2,R/\sqrt2)$. 

In the limit $r=0$ the LAC is lost, which follows as in Example
\ref{ex:LAC.SC.Cross}. For $r>0$ and $R=\infty$ we still have the LAC, while
semiconcavity is no longer true globally.  However, $X_{r,\infty}$ can be
written as the union of compact sets spaces on which semiconcavity holds with
an increasing $\kappa$. 
\end{example} 
\begin{figure}[ht]\footnotesize
\hspace*{5em}\begin{tikzpicture}
\draw[color=green!20, fill=green!20] (0,0) circle (2);
\draw[color=white,fill=white] (-2,-2) rectangle (0,0);
\draw[color=green!20, fill=green!20] (-0.5,-0.5) rectangle (0,0);
\draw[color=red!10,fill=red!10] (-0.5,-0.5) circle (0.5);
\draw[color=green, very thick] (0,-2) arc (-90:180:2);
\draw[color=green, very thick] (0,-2)-- (0,-0.5);
\draw[color=green, very thick] (-2,0)-- (-0.5,0);
\draw[color=green, very thick] (0,-0.5) arc (0:90:0.5);
\draw[color=blue,thick] (-2,0.06)-- node[pos=0.7, above]{$\gamma^{(1)}$} (2,0.06) ;
\draw[color=red,thick] (0.03,-2)-- node[pos=0.5, right]{$\gamma^{(2)}$} (0.03,-0.5);
\draw[color=red,thick] (-2,0.03)-- (-0.5,0.03);
\draw[color=brown,thick] (0.3,1.8)--node[pos=0.5,right]{$\gamma^{(3)}$}(1.5,0.3);

\draw[color=black,dashed,thin, ->] (0,0.06)-- node[pos=0.6,right]{$R$} (0,2);
\draw[color=black,thin, ->] (-0.5,-0.5)-- node[pos=0.5,below]{$r$} (0,-0.5);
\draw[color=red,thick] (0.03,-0.53) arc (0:90:0.56);
\node[color=black] at (-1,1) {$X_{r,R}$}; 
\node[left, color=black] at (-2,0) {$x_*$}; 
\node[right, color=black] at (2,0) {$y_*$}; 
\node[left, color=black] at (0,-2) {$z_*$}; 
\end{tikzpicture}   \quad
\begin{minipage}[b]{0.53\linewidth}\caption{The green area denotes the 
    metric space $X_{r,R}$. Geodesic curves are either straight lines like
    $\gamma^{(1)}$ or $\gamma^{(3)}$ or they touch the (light red) circle of
    radius $r$ around the center $(-r,-r)$ like $\gamma^{(2)}$.
    Considering $\gamma^{(1)}$ and $\gamma^{(2)}$ we see that geodesics can
    branch.\\[1em] \mbox{} }
\end{minipage}
\label{fig:3/4disc}
\end{figure}
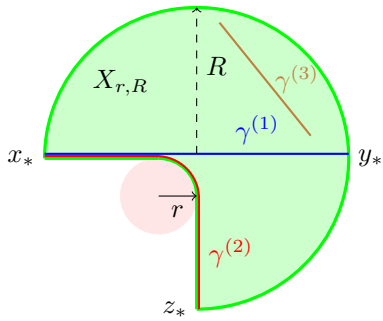

From the above examples one can see that the LAC and the semiconcavity of
$\frac12\sfd^2(\ob,\cdot) $ serve the same purpose, namely controlling the
divergence of initially identical or inially close geodesics for larger values. 
Consider two geodesics $\gamma^{(j)}:[0,1]\to \calX$, $j=1,2$, that coincide for $s\in
[0,s_*]$ with $s_*>0$ and differ for $s>s_*$. Splitting the two geodesics at
the common point $x=\gamma^{(1)}(s_*)=\gamma^{(1)}(s_*)$, we obtain three
geodesics starting at $x_*$ and ending in
$y_0=\gamma^{(1)}(0)=\gamma^{(1)}(0)$, $y_1=\gamma^{(1)}(1)$, and
$y_2=\gamma^{(2)}(1)$, respectively. 
Since $\gamma^{(1)}$ and $\gamma^{(2)}$ are geodesics,
we easily see 
\[
\sphericalangle(\geo{xy_0},\geo{xy_1})= \pi\quad \text{and} \quad 
\sphericalangle(\geo{xy_0},\geo{xy_2})= \pi.
\]
Thus, the LAC implies that $\sphericalangle(\geo{xy_1},\geo{xy_2})=0$, which
means that the splitting of $\gamma^{(1)}$ and $\gamma^{(2)}$ has to be
tangential. See Figure \ref{fig:3/4disc} in Example \ref{ex:3/4disc}, where
also the role of $\kappa>0$ in the
$\kappa$-concavity of $\frac12\sfd^2(\ob,\cdot) $ can be seen. In this example,
$\kappa$ is proportional to $1/r$, which is the curvature of the arc contained
in the geodesic $\gamma^{(2)}$. The exact statement is given in Proposition
\ref{pr:EstimDerDist}. 

While the above simple examples provide some intuition, it is important to note
that the 2-Wasserstein space $(\calP(X),\mathsf W_2)$ is globally $1$-concave,
see Thm.\,7.3.2+Prop.\,9.3.12 in \cite{AmGiSa05GFMS}. The results needed for the
theory developed here (see Theorem \ref{mainmain}) for the spaces
$(\calM(X),\HK)$ and $(\calP(X),\SHK)$ are provided in
\cite[Prop.\,4.1]{LasMie19GPCA} for LAC and in \cite[Sec.\,4.2]{LasMie19GPCA}
for semiconcavity. The latter results are local in the sense that one needs to
restrict to suitable subsets with lower and upper density bounds.  \EEE

\subsection{Estimates for the MM scheme}
\label{su:MinimMovem}

\begin{GSreplace}
  In this subsection we state a few of the results from \cite{Sava11?GFDS,
    MurSavII+III} quite explicitly, especially to emphasize the dependence on
  the semiconvexity parameter $\lambda$ of $\phi$ and the semiconcavity
  parameter $\kappa$ of $\frac12 \sfd^2$. This concerns our Lemma
  \ref{le:EulerEquMM}, Proposition \ref{pr:ErrorProduc}, Corollary
  \ref{co:error}, and Lemma \ref{le:error.estimate}. 
\end{GSreplace}

We first state some very basic estimates that hold true for the discrete
solutions of the MM scheme.  Using geodesic interpolation,
$\lambda$-convexity of $\phi$ and $\kappa$-concavity of the squared distance,
one then obtains sharper estimates that allows us to control the distance between
different approximants. 
\begin{GSproof}
\COLORW 
All the results in this subsection are taken from \cite{Sava11?GFDS,
  MurSavII+III}. 
\COLORWend
\end{GSproof}

These estimates will be used later prove the existence of
curves that satisfy EVI$_\lambda$, but also to bound the distance of the
EVI$_\lambda$ satisfying curve from the approximating curves occurring by
geodesically interpolating the points of the minimizing scheme.
Before we proceed we are going to define the metric slope $|\partial\phi|$ of
$\phi$ at a point $x\in \calX$ via 
\begin{equation}
    |\partial\phi|(x)=\begin{cases}+\infty& \text{for } x\notin \dom(\phi),
\\0& \text{if } x\in \dom(\phi) \text{ is isolated},\\ \limsup_{y\rightarrow
  x}\frac{\max\{0,\phi(x)-\phi(y)\}}{d(x,y)}& \text{otherwise}. 
    \end{cases}
\end{equation}

\begin{lemma}[Euler equation for discrete solutions]
\label{le:EulerEquMM}
For a given $x_{n-1}^{\tau}\in\calX$ and $\tau>0,$ assume that
$x_{n}^{\tau}\in\calX$ is a solution to the minimizing movement scheme
\eqref{eq:MiniMove}.  Then, for every observer $\ob \in\calX$ and all geodesics
$\geo{x^{\tau}_{n} x^{\tau}_{n-1}} \in \Geod(x^{\tau}_{n},x^{\tau}_{n-1})$ and
$\geo{x^{\tau}_{n} \ob} \in \Geod(x^{\tau}_{n},\ob ),$ we have
\begin{equation}
  \label{eq:EulerEstim1}
  \frac{1}{\tau}\langle \geo{x^{\tau}_{n} x^{\tau}_{n-1}},\geo{x^{\tau}_{n}
    \ob}\rangle_{\rmu\rmp}  
  +\frac{\lambda}{2}\sfd_{\calX}^{2}(x_{n}^{\tau},\ob )+\phi(x_{n}^{\tau})\leq \phi(\ob ),
\end{equation}
\begin{GSreplace}
\begin{equation}\label{Euler}
  \begin{aligned}\text{\emph{(a) }}\ |\partial\phi(x_{n}^{\tau})|\leq 
           \frac{\sfd_{\calX}(x_{n-1}^{\tau},x_{n}^{\tau})}{\tau},\quad
  &\text{\emph{(b)} }\ (1{+}\lambda\tau)\frac{\sfd_{\calX}(x_{n-1}^{\tau},x_{n}^{\tau})}{\tau} 
          \leq|\partial \phi|(x_{n-1}^{\tau}),\\
  &\text{\emph{(c)} }\ (1{+}\lambda\tau)|\partial \phi|(x_{n}^{\tau}) 
            \leq|\partial\phi|(x_{n-1}^{\tau}).
  \end{aligned}
\end{equation}
\end{GSreplace}
\begin{GSproof}
\COLORW
\begin{subequations}\label{Euler2}%
\begin{align}	
& \label{Euler2a}
|\partial\phi(x_{n}^{\tau})|\leq\frac{\sfd_{\calX}(x_{n-1}^{\tau},x_{n}^{\tau})}{\tau},
\\
& \label{Euler2b}
(1{+}\lambda\tau)\frac{\sfd_{\calX}(x_{n-1}^{\tau},x_{n}^{\tau})}{\tau}\leq|\partial
\phi|(x_{n-1}^{\tau}),
\\
&\label{Euler2c} 
(1{+}\lambda\tau)|\partial \phi|(x_{n}^{\tau})\leq|\partial\phi|(x_{n-1}^{\tau}). 
\end{align}
\end{subequations}
\COLORWend
\end{GSproof}
\end{lemma}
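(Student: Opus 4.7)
For the inequality \eqref{eq:EulerEstim1} I would perturb the minimizer $x^\tau_n$ along a geodesic $\geo{x_n^\tau \ob}$ from $x^\tau_n$ towards the observer $\ob$. Writing $y_t := \geo{x_n^\tau \ob}(t)$ and exploiting the minimality of $x_n^\tau$ gives
\[
 \phi(x_n^\tau) - \phi(y_t) \leq \frac{1}{2\tau}\bigl(\sfd^2(y_t,x_{n-1}^\tau)-\sfd^2(x_n^\tau,x_{n-1}^\tau)\bigr).
\]
On the other hand, the strong $\lambda$-convexity of $\phi$ along $\geo{x_n^\tau \ob}$ yields the reverse bound
\[
 \phi(x_n^\tau)-\phi(y_t)\geq t\bigl(\phi(x_n^\tau)-\phi(\ob)\bigr)+\tfrac{\lambda}{2}t(1{-}t)\sfd^2(x_n^\tau,\ob).
\]
Dividing by $t$ and passing to $\limsup_{t\downarrow 0}$, I would invoke Proposition \ref{pr:FirVarInnProd} to identify
\[
 \limsup_{t\downarrow 0}\frac{\sfd^2(y_t,x_{n-1}^\tau)-\sfd^2(x_n^\tau,x_{n-1}^\tau)}{2t}
 \leq -\langle \geo{x_n^\tau x_{n-1}^\tau},\geo{x_n^\tau \ob}\rangle_{\rmu\rmp},
\]
and rearranging gives \eqref{eq:EulerEstim1} (using the symmetry of the upper inner product in its two arguments).

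For the slope bound (a), the plan is a purely variational argument: for any $y\in\calX$ the minimality of $x_n^\tau$ together with the triangle inequality gives
\[
 \phi(x_n^\tau)-\phi(y) \leq \frac{1}{2\tau}\bigl(\sfd(y,x_n^\tau)+2\sfd(x_n^\tau,x_{n-1}^\tau)\bigr)\sfd(y,x_n^\tau).
\]
Dividing by $\sfd(y,x_n^\tau)$ and letting $y\to x_n^\tau$ yields (a).

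For the De Giorgi-type estimate (b) I would use a two-step argument along a geodesic $y_t=\geo{x_{n-1}^\tau x_n^\tau}(t)$. First, combining the minimality inequality $\phi(y_t)-\phi(x_n^\tau)\geq \frac{1-t^2}{2\tau}\sfd^2$ (where $\sfd:=\sfd(x_{n-1}^\tau,x_n^\tau)$) with the $\lambda$-convexity bound $\phi(y_t)\leq (1{-}t)\phi(x_{n-1}^\tau)+t\phi(x_n^\tau)-\frac\lambda2 t(1{-}t)\sfd^2$, dividing by $(1{-}t)$ and letting $t\uparrow 1$ gives the sharp descent
\[
 \phi(x_{n-1}^\tau)-\phi(x_n^\tau)\;\geq\;\frac{\sfd^2}{\tau}+\frac{\lambda}{2}\sfd^2.
\]
Second, the same $\lambda$-convexity bound, divided by $\sfd(x_{n-1}^\tau,y_t)=t\sfd$ and sent $t\downarrow 0$, produces
\[
 |\partial\phi|(x_{n-1}^\tau)\;\geq\;\limsup_{t\downarrow 0}\frac{\phi(x_{n-1}^\tau)-\phi(y_t)}{\sfd(x_{n-1}^\tau,y_t)}\;\geq\;\frac{\phi(x_{n-1}^\tau)-\phi(x_n^\tau)}{\sfd}+\frac{\lambda}{2}\sfd.
\]
Inserting the sharp descent estimate then gives $|\partial\phi|(x_{n-1}^\tau)\geq \sfd/\tau+\lambda\sfd=(1{+}\lambda\tau)\sfd/\tau$, which is (b). Finally (c) is immediate from combining (a) at $x_n^\tau$ with (b).

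The main technical obstacle is the two-stage handling of the geodesic variation in (b): neither the limit $t\uparrow 1$ nor $t\downarrow 0$ alone gives the asymptotically optimal factor $(1{+}\lambda\tau)$, and only the concatenation of a sharp energy-descent estimate with a slope estimate along the \emph{same} geodesic yields the desired bound. Implicit throughout is the restriction to $\tau$ small enough that $1{+}\lambda\tau>0$, which is compatible with the standing hypothesis for the MM scheme and guarantees existence of $x_n^\tau$ by the quadratic coercivity coming from strong $\lambda$-convexity.
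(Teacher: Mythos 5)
Your proposal is correct and follows essentially the same route as the paper: the first variation along $\geo{x_n^\tau\ob}$ combined with Proposition \ref{pr:FirVarInnProd} and $\lambda$-convexity for \eqref{eq:EulerEstim1}, the triangle-inequality variational argument for (a), the concatenation of a sharp descent estimate with a slope estimate along $\geo{x_{n-1}^\tau x_n^\tau}$ for (b), and the chaining of (a) and (b) for (c). The only cosmetic difference is that you re-derive the descent inequality $\phi(x_{n-1}^\tau)-\phi(x_n^\tau)\geq(\tfrac1\tau+\tfrac\lambda2)\sfd^2$ directly from minimality along the connecting geodesic, whereas the paper obtains the same inequality by specializing \eqref{eq:EulerEstim1} to the observer $\ob=x_{n-1}^\tau$.
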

\begin{GSproof}
\COLORW
\begin{proof}
\ul{Step 1: \eqref{eq:EulerEstim1}}  We define the map
  $$Y:[0,1] \to (-\infty,+\infty],\ s\mapsto
  \frac{1}{2\tau}\sfd_{\calX}^{2}(x^{\tau}_{n-1},\geo{x^{\tau}_{n}  \ob}(s)) +
  \phi(\geo{x^{\tau}_{n} \ob}(s)).$$  Since $x^{\tau}_{n}=\geo{x^{\tau}_{n}\ob}(0)$ is a global
  minimizer, the function $Y$ has a minimum at $s=0$. Therefore, 
\begin{align*}
0&\leq\liminf_{s\downarrow 0}\frac{Y(s)-Y(0)}{s}
 \overset{\text{Prop.\,\ref{pr:FirVarInnProd}}}{\leq} 
  -\frac{1}{\tau}\langle\geo{x^{\tau}_{n} x^{\tau}_{n-1}},\geo{x^{\tau}_{n} \ob}\rangle_{\rmu\rmp} 
+\liminf_{s\downarrow 0}\frac{\phi(\geo{x_{n} \ob}(s))-\phi(x^{\tau}_{n})}{s}
\\
&
\hspace{-0.9em}\overset{\lambda\text{-convex}}{\leq}
 -\frac{1}{\tau}\langle\geo{x^{\tau}_{n} x^{\tau}_{n-1}},\geo{x^{\tau}_{n} \ob}\rangle_{\rmu\rmp} 
 +\phi(\ob )-\phi(x^{\tau}_{n}) 
 -\liminf_{s\downarrow 0}\frac{\lambda}{2}\,
 \frac{s(1{-}s)\sfd_{\calX}^{2}(x^{\tau}_{n},\ob )}{s}
\\&=-\frac{1}{\tau}\langle\geo{x^{\tau}_{n} x^{\tau}_{n-1}},\geo{x^{\tau}_{n} \ob} 
\rangle_{\rmu\rmp}+\phi(\ob )-\phi(x^{\tau}_{n})-\frac{\lambda}{2} 
  \sfd_{\calX}^{2}(x^{\tau}_{n},\ob ).
\end{align*}
This is the desired estimate \eqref{eq:EulerEstim1}.
 
\ul{Step 2: \eqref{Euler2a}} Since $x_{n}^{\tau}$ is a minimizer of the movement scheme \eqref{eq:MiniMove}, we get 
\begin{equation}
        \phi(x_{n}^{\tau})+\frac{\sfd_{\calX}^{2}(x_{n-1}^{\tau},x_{n}^{\tau})}{2\tau}\leq \phi(y)+\frac{\sfd_{\calX}^{2}(x_{n-1}^{\tau},y)}{2\tau}.
\end{equation}
From which, we get
\begin{equation}
\begin{split}
 \phi(y) &\geq \phi(x_{n}^{\tau})+\frac{1}{2\tau}\left(\sfd_{\calX}^{2}(x_{n-1}^{\tau},x_{n}^{\tau})-\sfd_{\calX}^{2}(x_{n-1}^{\tau},y)\right)\\
 &\geq \phi(x_{n}^{\tau})-\sfd_{\calX}(y,x_{n}^{\tau})\frac{1}{2\tau}\left(\sfd_{\calX}(x_{n-1}^{\tau},x_{n}^{\tau})+\sfd_{\calX}(x_{n-1}^{\tau},y)\right)\\
 &= \phi(x_{n}^{\tau})-\frac{1}{\tau}\sfd_{\calX}(y,x_{n}^{\tau})\sfd_{\calX}(x_{n-1}^{\tau},x_{n}^{\tau})-\sfd_{\calX}(y,x_{n}^{\tau})\frac{1}{2\tau}\left(\sfd_{\calX}(x_{n-1}^{\tau},y)-\sfd_{\calX}(x_{n-1}^{\tau},x_{n}^{\tau})\right)\\
  &\geq  \phi(x_{n}^{\tau})-\frac{1}{\tau}\sfd_{\calX}(y,x_{n}^{\tau})\sfd_{\calX}(x_{n-1}^{\tau},x_{n}^{\tau})-\frac{1}{2\tau}\sfd_{\calX}(y,x_{n}^{\tau})
\end{split}
\end{equation}
Now dividing by $\sfd_{\calX}(y,x_{n}^{\tau})$ and taking the limit we get the result.\newline

\ul{Step 3: \eqref{Euler2b} and \eqref{Euler2c}} In \eqref{Euler2} we choose $\ob=x^{\tau}_{n-1}$  
\begin{equation}
\frac{1+\frac{1}{2}\lambda\tau}{\tau}\sfd_{\calX}(x_{n-1}^{\tau},x_{n}^{\tau})\leq \phi(x_{n-1}^{\tau})-\phi(x_{n}^{\tau}) \leq|\partial\phi(x_{n-1}^{\tau})|\sfd_{\calX}(x_{n-1}^{\tau},x_{n}^{\tau})
-\frac{\lambda}{2}\sfd_{\calX}^{2}(x_{n-1}^{\tau},x_{n}^{\tau})\end{equation}
By subtracting $\frac{1}{2\tau}\sfd_{\calX}^{2}(x_{n-1}^{\tau},x_{n}^{\tau}),$ from all terms, we get what we want.
\end{proof}
\COLORWend
\end{GSproof}

We now provide an estimate of how much a geodesic interpolation of a
minimizing scheme deviates from being an EVI$_{\last}$ 
solution for the modified $\last=2\min\{0,\lambda\}-2 \leq -2$,
where $\lambda$ is such that $\phi$ is $\lambda$-convex. We note and highlight
that $\lambda$-convexity of $\phi$ with $\lambda >0$ will not be helpful
here. The following technical estimate  will be used later to prove that
interpolating curves $\xx^{\tau}$ converge to some curve
$\xx$ when the time steps $\tau$ converges to 0. One important feature of
the following result is that the error terms $\Er^\tau_n$ are independent of
the observer point $\ob$.

\begin{proposition}[Discrete error estimates]
\label{pr:ErrorProduc}
  Let $\lambda \leq 0$, $\tau>0$ with $1{+}\lambda\tau>0$, and 
  $x^\tau_0\in \A_{\kappa_{0}}$ be fixed 
  that the discrete solution $\{x^{\tau}_{n}\}_{n\in\mathbb{N}}$ of
  \eqref{eq:MiniMove} satisfies the following: 
   For $T>0,$ let $\kappa\in\mathbb{N}$ be such that $x^{\tau}_{n}\in\A_{\kappa},$
  for  all $n\in \N_0$ with  $n<\frac{T}{\tau}+1$. Then, all geodesic
  interpolants $\xx^{\tau}:[0,\infty) \to \calX$, given by
\begin{equation}
\label{geodesic.interpolation}
  \xx^{\tau}(t)=\geo{x^{\tau}_{n} x^{\tau}_{n+1}}((t{-}n\tau)/\tau) \text{ for } 
 t\in[n\tau,(n{+}1) \tau] \ \text{ with } \geo{x^{\tau}_{n} x^{\tau}_{n+1}}\in
 \Geod(x^{\tau}_{n},x^{\tau}_{n+1}) ,
\end{equation}
satisfies,  for all\/  $\ob \in \GeoCov{\A}_\kappa$ and  almost all
$t\in[0,T]$,  the estimate
\begin{equation}
\label{evi.for.interpol}
  \frac{\rmd}{\rmd t}\frac{1}{2}\sfd_{\calX}^{2}(\xx^{\tau}(t),\ob) 
  + \frac{\last}{2}\sfd_{\calX}^{2}(\xx^{\tau}(t),\ob )\leq \phi(\ob)
  - \phi(\xx^{\tau}(t))+ \Er^{\tau,\kappa}(t), 
\end{equation} 
with $\last=2\min\{0,\lambda\}-2$ and $\Er^{\tau}(t)=\Er^{\tau}_{n}$ for
$t\in[n\tau,(n{+}1)\tau)$, where
\begin{equation}
\label{error}
 \Er^{\tau,\kappa}_{n} =\begin{cases}
    (1{-}2\lambda)\sfd_{\calX}^{2}(x^{\tau}_{0},x^{\tau}_{1})+
   \left(1+(1{+}\lambda\tau)^{-1}\right)|\partial\phi|^{2}(x^\tau_0) 
              &\text{ for } n=0,\\
    \left(1{-}2\lambda{+}  \kappa/\tau \right) 
   \sfd^{2}(x^{\tau}_{n},x^{\tau}_{n+1})+\frac1{\tau^{2}}
    \,\invd^{2}\Big(\geo{x^{\tau}_{n} x^{\tau}_{n-1}}, 
  \geo{x^{\tau}_{n} x^{\tau}_{n+1}}\Big)&\text{ for } n\in\mathbb{N}.
  \end{cases}
\end{equation} 
\end{proposition}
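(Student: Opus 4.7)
The plan is to combine Proposition~\ref{pr:EstimDerDist} with the Euler inequality \eqref{eq:EulerEstim1}, and then carry out a careful bookkeeping via the triangle and Young inequalities so as to replace every occurrence of $x^\tau_n$ by $\xx^\tau(t)$ with controlled error. Fix $n\geq 1$ and $t\in(n\tau,(n{+}1)\tau)$, set $s=(t{-}n\tau)/\tau$, and recall that $\xx^\tau(t)=\geo{x^\tau_n x^\tau_{n+1}}(s)$. Because $x^\tau_n\in\A_\kappa$ and $\ob\in\GeoCov{\A_\kappa}$, assumption~(A1) furnishes LAC at $x^\tau_n$ as well as $\kappa$-concavity of $\frac12\sfd^2(\cdot,\ob)$ along $\geo{x^\tau_n x^\tau_{n+1}}$. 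Applying \eqref{eq:DerivEstim} with the comparison point $z=x^\tau_{n-1}$, pulling a factor $1/\tau$ through the chain rule, and then invoking the Euler inequality \eqref{eq:EulerEstim1} at $x^\tau_n$ to eliminate the inner-product term gives
\begin{equation}
\label{eq:plan.combined}
\tfrac{\rmd^+}{\rmd t}\tfrac12\sfd^{2}(\xx^\tau,\ob)
\leq \phi(\ob)-\phi(x^\tau_n)-\tfrac{\lambda}{2}\sfd^{2}(x^\tau_n,\ob)
  +\tfrac{1}{\tau}\sfd(x^\tau_n,\ob)\invd
  +\tfrac{s\kappa}{\tau}\sfd^{2}(x^\tau_n,x^\tau_{n+1}),
\end{equation}
where $\invd=\invd(\geo{x^\tau_n x^\tau_{n-1}},\geo{x^\tau_n x^\tau_{n+1}})$.

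It remains to re-express every term involving $x^\tau_n$ by its counterpart at $\xx^\tau(t)$ while matching the two distinguished coefficients $1{-}\lambda$ in front of $\sfd^2(\xx^\tau,\ob)$ and $1/\tau^2$ in front of $\invd^2$. The $\phi$-substitution uses $\lambda$-convexity along $\geo{x^\tau_n x^\tau_{n+1}}$ together with the energy descent $\phi(x^\tau_{n+1})\leq\phi(x^\tau_n)$ implicit in \eqref{eq:MiniMove}, yielding $-\phi(x^\tau_n)\leq -\phi(\xx^\tau(t))+\tfrac{-\lambda}{2}s(1{-}s)\sfd^{2}(x^\tau_n,x^\tau_{n+1})$; the correction is non-negative because $\lambda\leq 0$. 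For the distance terms I combine $\sfd(x^\tau_n,\ob)\leq\sfd(\xx^\tau,\ob)+s\sfd(x^\tau_n,x^\tau_{n+1})$ with $(a{+}b)^{2}\leq 2a^{2}+2b^{2}$, and apply Young's inequality $pq\leq\alpha p^{2}+\tfrac{1}{4\alpha}q^{2}$ twice to the split $\tfrac{1}{\tau}\sfd(x^\tau_n,\ob)\invd\leq\tfrac{1}{\tau}\sfd(\xx^\tau,\ob)\invd+\tfrac{s}{\tau}\sfd(x^\tau_n,x^\tau_{n+1})\invd$ with constants $\alpha_{1}=1$ on the first piece and $\alpha_{2}=1/3$ on the second. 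This choice makes the two contributions to $\invd^{2}/\tau^{2}$ sum to exactly $\tfrac{1}{4}+\tfrac{3}{4}=1$, while the $\sfd^2(\xx^\tau,\ob)$-contribution $\alpha_1=1$ combines with the triangle-expanded $-\tfrac{\lambda}{2}\sfd^{2}(x^\tau_n,\ob)\leq -\lambda\sfd^{2}(\xx^\tau,\ob)+(-\lambda)s^{2}\sfd^{2}(x^\tau_n,x^\tau_{n+1})$ to produce precisely $(1{-}\lambda)\sfd^{2}(\xx^\tau,\ob)$. All remaining contributions fall into the coefficient of $\sfd^{2}(x^\tau_n,x^\tau_{n+1})$; using $s\in[0,1]$ and $\lambda\leq 0$ the sum $s\kappa/\tau+(-\lambda+\tfrac13)s^{2}+\tfrac{-\lambda}{2}s(1{-}s)$ is dominated by $\kappa/\tau+1{-}2\lambda$. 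Rewriting $-\tfrac{\last}{2}\sfd^{2}(\xx^\tau,\ob)=(1{-}\lambda)\sfd^{2}(\xx^\tau,\ob)$ and moving this term to the left side recovers exactly \eqref{evi.for.interpol} with the prescribed $\Er^{\tau,\kappa}_{n}$.

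For $n=0$ there is no $x^\tau_{-1}$, and hence no LAC-based auxiliary geodesic; I proceed instead from the Lipschitz estimate $\tfrac{\rmd^+}{\rmd t}\tfrac12\sfd^{2}(\xx^\tau,\ob)\leq\sfd(\xx^\tau,\ob)\sfd(x^\tau_0,x^\tau_1)/\tau$, which is valid because $\xx^\tau|_{[0,\tau]}$ is a constant-speed geodesic. Property~(b) of Lemma~\ref{le:EulerEquMM} provides $\sfd(x^\tau_0,x^\tau_1)/\tau\leq(1{+}\lambda\tau)^{-1}|\partial\phi|(x^\tau_0)$, and a single Young split with $\alpha=1{-}\lambda$, combined with the $\phi$-substitution of paragraph~2 applied to $\geo{x^\tau_0 x^\tau_1}$, delivers the $|\partial\phi|^{2}(x^\tau_0)$ and $\sfd^{2}(x^\tau_0,x^\tau_1)$ contributions appearing in $\Er^{\tau,\kappa}_{0}$, with $|\partial\phi|^{2}(x^\tau_0)$ now playing the role that $\invd^{2}/\tau^{2}$ plays for $n\geq 1$. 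The main technical obstacle throughout is exactly this balancing act: the Young constants must be chosen so that both distinguished coefficients land on their nose, and all residual contributions must then be absorbed into the $\sfd^{2}(x^\tau_n,x^\tau_{n+1})$-coefficient by monotonicity in $s\in[0,1]$ and the sign condition $\lambda\leq 0$.
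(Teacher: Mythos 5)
Your treatment of the case $n\geq 1$ is correct and follows essentially the paper's route: apply \eqref{eq:DerivEstim} with the auxiliary point $z=x^\tau_{n-1}$, eliminate the inner product via the Euler inequality \eqref{eq:EulerEstim1}, and then convert every occurrence of $x^\tau_n$ into $\xx^\tau(t)$ using the triangle inequality, Young's inequality, and $\lambda$-convexity along $\geo{x^\tau_n x^\tau_{n+1}}$ together with energy descent. Your bookkeeping (the split with $\alpha_1=1$ and $\alpha_2=1/3$ so that the $\invd^2$-contributions sum to $1/\tau^2$, the coefficient $1-\lambda=-\last/2$ in front of $\sfd^2(\xx^\tau(t),\ob)$, and the absorption of all residues into $(1-2\lambda+\kappa/\tau)\,\sfd^2(x^\tau_n,x^\tau_{n+1})$ using $s\in[0,1]$, $\lambda\leq 0$, $\kappa>0$) checks out; the paper performs the same conversion but leaves the final constant-tracking implicit.

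The case $n=0$, however, has a genuine gap. Your starting point, the Lipschitz estimate $\frac{\rmd^+}{\rmd t}\frac12\sfd^{2}(\xx^\tau(t),\ob)\leq \sfd(\xx^\tau(t),\ob)\,\sfd(x^\tau_0,x^\tau_1)/\tau$, contains no information about $\phi(\ob)$, and neither the Young split nor the ``$\phi$-substitution'' along $\geo{x^\tau_0 x^\tau_1}$ can introduce it: the latter only relates $\phi(\xx^\tau(t))$ to $\phi(x^\tau_0)$. Yet \eqref{evi.for.interpol} carries the term $\phi(\ob)-\phi(\xx^\tau(t))$ on the right, which can be arbitrarily negative (take an observer with $\phi(\ob)\ll\phi(x^\tau_0)$), so it cannot simply be appended to an inequality whose right-hand side consists only of distance and slope terms. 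The missing ingredient is the slope--convexity inequality toward the observer,
\[
\phi(x^\tau_0)-\phi(\ob)\;\leq\;|\partial\phi|(x^\tau_0)\,\sfd(x^\tau_0,\ob)\;-\;\tfrac{\lambda}{2}\,\sfd^{2}(x^\tau_0,\ob),
\]
obtained from the strong $\lambda$-convexity of $\phi$ along a geodesic from $x^\tau_0$ to $\ob$ and the definition of the metric slope. Combined with your $\phi$-substitution this bounds $\phi(\xx^\tau(t))-\phi(\ob)$ from above; one must then absorb $|\partial\phi|(x^\tau_0)\,\sfd(x^\tau_0,\ob)$ by a further Young step and convert $\sfd(x^\tau_0,\ob)$ into $\sfd(\xx^\tau(t),\ob)$ by the triangle inequality --- this is precisely where the large coefficient $(1-2\lambda)$ of $\sfd^{2}(x^\tau_0,x^\tau_1)$ and the second copy of $|\partial\phi|^{2}(x^\tau_0)$ in $\Er^{\tau,\kappa}_0$ originate. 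As written, your $n=0$ argument does not close.
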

\begin{GSproof}
\COLORW
\begin{proof} \underline{Step 1:} 
First we notice that by $\lambda$-convexity, we have
$(1{-}\theta)\phi(x^{\tau}_{n})+\theta \phi(x^{\tau}_{n+1}) \geq
\phi(\xx^{\tau}((n{+}\theta)\tau))+\frac{\lambda}{8}\sfd_{\calX}^{2}(x^{\tau}_{n},x^{\tau}_{n+1})$.
Moreover, from the minimizing scheme we obtain
$\phi(x^{\tau}_{n})\geq \phi(x^{\tau}_{n+1})$, which now implies
\begin{equation}
  \label{lambda}
  \phi(x^{\tau}_{n})  \geq  \phi(\xx^{\tau}(t))+ 
   \frac{\lambda}{8}\sfd_{\calX}^{2}(x^{\tau}_{n},x^{\tau}_{n+1}) 
       \quad \text{for } t \in [n\tau, (n{+}1)\tau].
\end{equation} 
Using the triangle inequality for $s, t\in (n\tau,(n{+}1)\tau)$ gives
\begin{equation}
\label{tr}
\big| \sfd_{\calX}(\xx^\tau(s),\ob) - \sfd_{\calX}(\xx^\tau(t),\ob) \big| \leq 
\sfd_{\calX}(\xx^\tau(s),\xx^\tau(t)) = \frac{|s{-}t|} \tau \,\sfd_{\calX} ( x^\tau_n,
  x^\tau_{n+1}). 
\end{equation}

\underline{Step 2:} In the first step the geodesic interpolant satisfies  
\begin{equation}
\label{derder}
\begin{split}
	\frac{\rmd}{\rmd t}\frac{1}{2}\sfd_{\calX}^{2}(\xx^{\tau}(t),\ob )& 
\overset{\text{\eqref{tr}}}\leq \frac{\sfd_{\calX}(x^{\tau}_{0},x^{\tau}_{1})}{\tau} \:
  \sfd_{\calX}(\xx^{\tau}(t),\ob ) 
\leq\frac{\sfd_{\calX}^{2}(x^{\tau}_{0},x^{\tau}_{1})}{2\tau^{2}}+\frac{1}{2} 
\sfd_{\calX}^{2}(\xx^{\tau}(t),\ob )
\\
&\leq\frac{1}{2}(1+\lambda\tau)^{-2}|\partial\phi|^{2}(x^{\tau}_{0})+
   \frac{1}{2}\sfd_{\calX}^{2}(\xx(t),\ob), 
\end{split}
\end{equation}
where in the last inequality we applied \eqref{Euler2b}. 
We also get
\begin{equation}\label{diedie}
  \begin{aligned}
    &\phi(\xx^{\tau}(t))-\phi(\ob )\leq\phi(x^{\tau}_{0})-\phi(\ob )+
    \frac{\lambda}{8}\sfd_{\calX}^{2}(x^{\tau}_{0},x^{\tau}_{1})& &\text{\small [use
      \eqref{lambda}]}
    \\
    &\leq |\partial\phi(x^{\tau}_{0})|\sfd_{\calX}(x^{\tau}_{0},\ob
    )+\frac{\lambda}{2}\sfd_{\calX}^{2}(x^{\tau}_{0},\ob
    )+\frac{\lambda}{8}\sfd_{\calX}^{2}(x^{\tau}_{0},x^{\tau}_{1}) &&\text{\small [use
      $\lambda$-convexity of $\phi$]}
    \\
    &\leq |\partial\phi(x^{\tau}_{0})|^{2}+\frac{\sfd^{2}(x^{\tau}_{0},\ob
      )}{4}+\frac{\lambda}{2}\sfd_{\calX}^{2}(x^{\tau}_{0},\ob
    )+\frac{\lambda}{8}\sfd_{\calX}^{2}(x^{\tau}_{0},x^{\tau}_{1}) && \text{\small [use
      $ab\leq a^{2}+b^{2}/4$]}
    \\
    &\leq\frac{|\partial\phi(x^{\tau}_{0})|^{2}}{2}+\left(\lambda+\frac{1}{2}\right)
    \sfd^{2}(\xx(t),\ob )
    +\left(\frac{1}{2}+\frac{\lambda}{8}\right)\sfd_{\calX}^{2}(x^{\tau}_{0},x^{\tau}_{1}) &&
    \text{\small [use \eqref{tr}]}
    \\
    &\leq\frac{|\partial\phi(x^{\tau}_{0})|^{2}}{2}-\left(\lambda-\frac{1}{2}\right)
    \sfd_{\calX}^{2}(\xx(t),\ob ) +(1-2\lambda)\sfd^{2}(x^{\tau}_{0},x^{\tau}_{1})
    &&\text{\small [because of $\lambda<0$]}.
  \end{aligned}
\end{equation}
Here the last step is not needed, but makes it consistent with the case
$n>0$. 
 
\underline{Step 3.} Now combining \eqref{derder} with the \eqref{diedie}, we have 
\begin{equation}
  \frac{\rmd}{\rmd t}\frac{1}{2}\sfd^{2}(\xx^{\tau}(t),\ob )\leq\phi(\ob
  )-\phi(\xx^{\tau}(t))-\frac{\last}{2} \sfd^2( \xx^{\tau}(t),\ob )
  +\Er_{0}^\tau,
\end{equation} 
where $\last:=2\min\{0,\lambda\}-2$ and
$ \Er_{0}^\tau = (1-2\lambda)\sfd^{2}(x^{\tau}_{0},x^{\tau}_{1})+ 
(1+(1+\lambda\tau)^{-1})
|\partial\phi|^{2}(x) $.  For $n\in\mathbb{N}$ and $t\in [n\tau,(n{+}1)\tau],$
by applying \eqref{eq:DerivEstim} to
$\xx^\tau(t) = \geo{x^\tau_n, x^\tau_{n+1}}(t/\tau{-}n)$ and $z=x^\tau_{n-1}$ we have
\begin{equation*}
  \begin{split}
    &\frac{\rmd}{\rmd t}\frac{1}{2}\sfd^{2}(\xx^{\tau}(t),\ob )
\\
& \leq
    \frac{\langle\geo{x^{\tau}_{n} x^{\tau}_{n-1}},\geo{x^{\tau}_{n}
        \ob}\rangle_{\rmu\rmp}}{\tau}+ \frac{\sfd_{\mathcal{X}}(x^{\tau}_{n},\ob
      )\invd(\geo{x^{\tau}_{n} x^{\tau}_{n-1}},\geo{x^{\tau}_{n}
        x^{\tau}_{n+1}})}{\tau}
    +\frac{\kappa (t/\tau-n) \sfd^{2}(x^{\tau}_{n},x^{\tau}_{n+1})}{\tau}
\\ 
&\leq\phi(\ob)-\phi(x_{n}^{\tau})-\frac{\lambda}{2}\sfd_{\mathcal{X}}^{2}(x_{n}^{\tau},\ob ) 
  +  \frac{\sfd_{\calX}(x^{\tau}_{n},\ob )\invd(\geo{x^{\tau}_{n}
        x^{\tau}_{n-1}},\geo{x^{\tau}_{n} x^{\tau}_{n+1}})}{\tau}
    +\frac{\kappa^+}{\tau} \sfd_{\mathcal{X}}^{2}(x^{\tau}_{n},x^{\tau}_{n+1})
\\
    &\hspace{7pt}\leq \phi(\ob
    )-\phi(x_{n}^{\tau})-\frac{\lambda-1}{2}\sfd_{\calX}^{2}(x_{n}^{\tau},\ob )+ \frac{
      \invd^{2}(\geo{x^{\tau}_{n} x^{\tau}_{n-1}},\geo{x^{\tau}_{n}
        x^{\tau}_{n+1}})}{2\tau^{2}}
    +\frac{\kappa^+}{\tau} \sfd_{\calX}^{2}(x^{\tau}_{n},x^{\tau}_{n+1}),
  \end{split}
\end{equation*}	
where $\kappa^+=\max\{0,\kappa\}$ and the middle estimate used
\eqref{eq:EulerEstim1}. Hence, we found 
\begin{align*}
  &\frac{\rmd}{\rmd t}\frac{1}{2}\sfd_{\calX}^{2}(\xx^{\tau}(t),\ob )\leq\phi(\ob
  )-\phi(\xx^{\tau}(t))-\frac{\last}{2}\,\sfd_{\calX}^2( 
 \xx^{\tau}(t),\ob ) +\Er^\tau_{n}. 
\\
&\Er^\tau_{n}=
\left(1-2\lambda+\frac{\max\{\kappa,0\}}{\tau}\right)\,
  \sfd_{\calX}^{2}(x^{\tau}_{n},x^{\tau}_{n+1})+\frac{\invd^{2}\left(\geo{x^{\tau}_{n}  
      x^{\tau}_{n-1}},\geo{x^{\tau}_{n} x^{\tau}_{n+1}}\right)}{\tau^{2}}. 
\end{align*}

\underline{Step 4.} Together with  \eqref{derder} and  \eqref{diedie} the
desired estimate \eqref{evi.for.interpol}. 
\end{proof}
\COLORWend
\end{GSproof}

The next result exploits the strength of the EVI formulation with an
arbitrary observer. If we have two approximate solutions $\xx_i$ to the EVI, where the
error term $\Er_i(t)$ does not depend on the observer, then we obtain a control
on the distances between $\xx_1$ and $\xx_2$. 

\begin{lemma}[Distance between approximate EVI solutions]
\label{curves observing each other}
Let $\Er_{i}$, $i=1,2$, be nonnegative real functions in $L^{1}([0,T])$. Let
also $\xx_{i}\in \mathrm{AC}_\mathrm{loc}([0,\infty)),\, i=1,2,$ be two locally absolutely
continuous functions satisfying EVI$_\lambda$ for some $\last<0$ in the form
\begin{equation}
\label{eq:EVI.approx}
  \frac{\rmd}{\rmd t}\frac{1}{2}\sfd_{\calX}^{2}(\xx_{i}(t),\ob
  )+\frac{\last}{2}\sfd_{\calX}^{2}(\xx_{i}(t),\ob )\leq \phi(\ob
  )-\phi(\xx_{i}(t))+\Er_{i}(t),  
\end{equation}  
for a.a.\ $t\in[0,T]$ and every $\ob \in \xx_{3-i}([0,T])$ for
$i=1,2$. Then we have the estimate
\begin{equation}
  \label{eq:Lipschitz}
  \sup_{t\in[0,T]} \mathrm e^{\last t}\sfd_{\calX}(\xx_{1}(t),\xx_{2}(t))\leq
  \sfd_{\calX}(\xx_{1}(0),\xx_{2}(0))+  \big\|2\mathrm e^{2\last t}(\Er_{1}{+}
    \Er_2 )\big\|_{ L^{1}[0,T]}^{1/2}\hspace{-13pt}. 
\end{equation}
\end{lemma}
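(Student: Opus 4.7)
The plan is to derive a diagonal differential inequality for $D^2(t) := \sfd^2(\xx_1(t), \xx_2(t))$ via a doubling-of-variables argument on the integrated form of \eqref{eq:EVI.approx}, and then apply a Gronwall-type estimate. Since each $\xx_i$ is locally absolutely continuous, so is $t \mapsto D(t)$ by the triangle inequality, and hence $D^2$ is differentiable almost everywhere on $[0, T]$.

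For fixed times $0 \le t_0 < t_1 \le T$, I would integrate \eqref{eq:EVI.approx} for $\xx_1$ from $t_0$ to $t_1$ with the fixed observer $\ob = \xx_2(t_1)$, and analogously for $\xx_2$ with $\ob = \xx_1(t_0)$. Abbreviating $a(s, t) := \sfd^2(\xx_1(s), \xx_2(t))$ and summing the two integrated inequalities, one finds on the right the terms $(t_1 {-} t_0)\bigl[\phi(\xx_2(t_1)) + \phi(\xx_1(t_0))\bigr]$, which, in the limit $t_1 \to t_0 = t$, cancel (after Lebesgue differentiation) against the integrals $\int_{t_0}^{t_1}[\phi(\xx_1(r)) + \phi(\xx_2(r))]\,\rmd r$ appearing on the left. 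Dividing by $(t_1 {-} t_0)$ and passing to the limit at a common Lebesgue point of $\Er_1$ and $\Er_2$ (using joint continuity of $a$), the $\phi$ contributions cancel exactly, and one is left with
\begin{equation*}
\tfrac{1}{2} \tfrac{\rmd}{\rmd t} D^2(t) + \last D^2(t) \leq \Er_1(t) + \Er_2(t) \quad \text{for a.a.\ } t \in (0, T).
\end{equation*}

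Multiplying by $2\mathrm{e}^{2\last t}$ rewrites this as $\tfrac{\rmd}{\rmd t}\bigl[\mathrm{e}^{2\last t} D^2(t)\bigr] \leq 2\mathrm{e}^{2\last t}(\Er_1 + \Er_2)(t)$. Integrating from $0$ to $t$, taking square roots, and applying the elementary inequality $\sqrt{a^2 + b} \leq a + \sqrt{b}$ for $a,b \geq 0$ yields $\mathrm{e}^{\last t} D(t) \leq D(0) + \bigl\|2\mathrm{e}^{2\last r}(\Er_1 + \Er_2)\bigr\|_{L^1[0,T]}^{1/2}$, which is exactly \eqref{eq:Lipschitz} after taking the supremum over $t \in [0, T]$. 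The main technical obstacle is the doubling-of-variables step: each EVI \eqref{eq:EVI.approx} in pointwise form only holds on a full-measure set in $t$ depending on the observer $\ob$, so naively picking $\ob = \xx_{3-i}(t)$ is problematic since $\xx_{3-i}([0,T])$ is uncountable. Using instead the integral form (the analogue of Proposition \ref{pr:EVI-int}(A) with an extra $\int_s^t \Er_i$ on the right), which holds for all $s < t$ and all $\ob$ simultaneously, circumvents this; the diagonal limit then only requires differentiability of the AC map $t \mapsto D^2(t)$ and Lebesgue points of the $L^1$ errors, both of which are available.
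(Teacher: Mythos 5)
Your overall strategy coincides with the paper's: both routes reduce the claim to the differential inequality $\frac{\rmd}{\rmd t}\frac12 D^2(t)+\last D^2(t)\le \Er_1(t)+\Er_2(t)$ for $D(t)=\sfd(\xx_1(t),\xx_2(t))$, obtained by doubling variables so that the $\phi$-terms cancel, followed by the identical Gronwall step (multiply by $2\ee^{2\last t}$, integrate, take square roots using $\sqrt{a^2+b}\le a+\sqrt b$). The paper delegates the doubling step to \cite[Lem.\,4.3.4]{AmGiSa05GFMS} and then inserts the pointwise inequality \eqref{eq:EVI.approx} with observers $\ob=\xx_{3-i}(t)$, which is legitimate because the hypothesis is stated so that the exceptional null set in $t$ is uniform over observers in $\xx_{3-i}([0,T])$.

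Your hands-on implementation of the doubling step, however, has a gap. After summing the two integrated inequalities with observers $\xx_2(t_1)$ and $\xx_1(t_0)$, the right-hand side contains the term $(t_1{-}t_0)\,\phi(\xx_2(t_1))$, which must be compared with $\int_{t_0}^{t_1}\phi(\xx_2(r))\,\rmd r$ as $t_1\downarrow t_0$. The time average converges to $\phi(\xx_2(t_0))$ at a Lebesgue point, but $\phi(\xx_2(t_1))$ is evaluated at the moving endpoint, so the claimed ``exact cancellation'' requires $\limsup_{t_1\downarrow t_0}\phi(\xx_2(t_1))\le \phi(\xx_2(t_0))$, i.e.\ upper semicontinuity from the right of $\phi\circ\xx_2$; lower semicontinuity of $\phi$ gives the wrong direction. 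For exact EVI solutions this would follow from the monotonicity in Proposition \ref{pr:EVI-int}(B), but the $\xx_i$ here are only approximate solutions, and swapping the observers to $\xx_2(t_0)$ and $\xx_1(t_1)$ merely transfers the problem to $\phi(\xx_1(t_1))$. The repair — which is exactly the content of the cited AGS lemma — is to bound the diagonal derivative by the sum of the two partial upper difference quotients in which the observer is frozen at the differentiation time $t$: then only the constants $\phi(\xx_2(t))$ and $\phi(\xx_1(t))$ appear, and they cancel against the Lebesgue averages of $\phi\circ\xx_1$ and $\phi\circ\xx_2$ with no continuity assumption. (In the paper's application the interpolants have $\phi\circ\xx_i$ continuous, since $\phi$ is $\lambda$-convex and finite along each geodesic segment, so your argument can be salvaged there, but not at the stated level of generality of the lemma.)
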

\begin{proof} For $s,t\in [0,T]$ we define the function
  $q(s,t):=\frac{1}{2}\sfd_{\calX}(\xx_{1} (s), \xx_{2}(t))$. Applying
  \cite[Lem.\,4.3.4]{AmGiSa05GFMS} we obtain the differential
  inequality
\begin{equation}
  \begin{split}
    \frac{\rmd}{\rmd t}\frac{1}{2}\sfd_{\calX}^{2}(\xx_{1}(t),\xx_{2}(t))&\leq
    \limsup_{h\downarrow 0}\frac{q(t,t)-q(t{-}h,t)}{h} +
    \limsup_{h\downarrow 0}\frac{q(t,t)-q(t,t{+}h)}{h}\\
    & = \frac{\rmd}{\rmd
      s}\frac{1}{2}\sfd_{\calX}^{2}(\xx_{1}(s),\xx_{2}(t))|_{s=t} +
    \frac{\rmd}{\rmd r}\frac{1}{2}\sfd_{\calX}^{2}(\xx_{1}(t),\xx_{2}(r))|_{r=t}\\
    &\leq -\last\sfd_{\calX}^{2}(\xx_{1}(t),\xx_{2}(t)) +\Er_{1}(t){+}\Er_{2}(t),
  \end{split}
\end{equation}
for almost every $t>0$, where we used \eqref{eq:EVI.approx} for $\xx_i$
with observer $\ob = \xx_{3-i}$. Multiplying the inequality by 
$2 \mathrm e^{2\last t}$ and integrating in time yields the desired estimate.
\end{proof}

The following result specifies this estimate by looking at the solutions
obtained as geodesic interpolants from MM schemes with two different time steps
$\tau>0$ and $\sigma>0$. 

\begin{corollary}[Comparison of MM solutions]
\label{co:error}
Let $\tau,\sigma,$ two time steps and let $\xx^{\tau}$ and $\xx^{\sigma}$ be two
piecewise geodesic interpolants defined in \eqref{geodesic.interpolation} for
initial conditions  $x^{\tau}_{0}, x^{\sigma}_{0}\in\A_{\kappa_{0}}$,
respectively. Let $\phi$ be $\lambda$-convex with $\lambda\leq0$ and
set $\last=2\min\{0,\lambda\}-2$. Then we have 
\begin{equation}
  \sup_{t\in[0,T]}e^{\last t}\sfd_{\calX}(\xx^{\tau}(t),\xx^{\sigma}(t)) 
 \leq \sfd_{\calX}(x^{\tau}_0,x^{\sigma}_{0})+\big\|2 e^{2\last t}(\Er^{\tau}{+}
   \Er^{\sigma}) \big\|_{L^{1}[0,T]}^{1/2}, 
\end{equation}
where $\Er^{\tau}$ and $\Er^\sigma$ are defined via \eqref{error} in Proposition
\ref{pr:ErrorProduc}.
\end{corollary}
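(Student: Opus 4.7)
The plan is to chain the two previous results: Proposition~\ref{pr:ErrorProduc} produces an approximate EVI$_{\last}$ inequality for each interpolant individually, and then Lemma~\ref{curves observing each other} turns those two inequalities into a quantitative control on the pairwise distance $\sfd(\xx^{\tau}(t),\xx^{\sigma}(t))$. The key point that makes this chaining immediate is that the error terms $\Er^{\tau}$ and $\Er^{\sigma}$ furnished by \eqref{error} are \emph{independent of the observer}; so the hypotheses of the lemma are satisfied as soon as one checks that each interpolant is an admissible observer for the other.

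First, I would fix a common $\kappa\in\mathbb{N}$ large enough (using Assumption (A2) from Theorem~\ref{th:MainExistTheo}) so that all discrete iterates $x^{\tau}_{n}$ with $n\tau\leq T$ and $x^{\sigma}_{n}$ with $n\sigma\leq T$ lie in $\A_{\kappa}$. Because each piecewise geodesic interpolant is built by concatenating geodesics connecting consecutive iterates, Definition~\ref{de:GeodCover} immediately gives $\xx^{\tau}([0,T])\cup\xx^{\sigma}([0,T])\subset \GeoCov{\A}_{\kappa}$.

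With this choice of $\kappa$, Proposition~\ref{pr:ErrorProduc} applied to $\xx^{\tau}$ yields the inequality \eqref{evi.for.interpol} with error $\Er^{\tau}$ for every observer $\ob\in\GeoCov{\A}_{\kappa}$, and likewise for $\xx^{\sigma}$ with error $\Er^{\sigma}$. In particular, for a.e.\ $t\in[0,T]$ the curve $\xx^{\tau}$ satisfies \eqref{eq:EVI.approx} with observers taken from $\xx^{\sigma}([0,T])$, and symmetrically. Since each interpolant is Lipschitz (hence locally absolutely continuous) on $[0,T]$ and $\last<0$, all hypotheses of Lemma~\ref{curves observing each other} are in force.

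Invoking that lemma with $\xx_{1}=\xx^{\tau}$, $\xx_{2}=\xx^{\sigma}$, $\Er_{1}=\Er^{\tau}$, $\Er_{2}=\Er^{\sigma}$ directly produces \eqref{eq:Lipschitz}, which is exactly the asserted inequality once we recognize $\sfd(\xx_{1}(0),\xx_{2}(0))=\sfd(x^{\tau}_{0},x^{\sigma}_{0})$. The only mild subtlety is ensuring both interpolants live in the same $\GeoCov{\A}_{\kappa}$, but this is a direct consequence of (A2) and the construction of the geodesic interpolant; beyond that, the corollary is essentially a bookkeeping exercise combining the two preceding results.
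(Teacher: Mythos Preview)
Your proposal is correct and matches the paper's intended approach: the corollary is stated without proof precisely because it follows immediately by feeding the approximate EVI inequalities from Proposition~\ref{pr:ErrorProduc} into Lemma~\ref{curves observing each other}, exactly as you outline. Your observation that both interpolants lie in the common $\GeoCov{\A}_{\kappa}$ (so each is an admissible observer for the other) is the one nontrivial verification, and you handle it correctly.
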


Thus, it remains to control the error functions $\Er^\tau$. The main
problem is to control the terms
$\frac1{\tau} \invd^{2}\left(\geo{x^{\tau}_{n} x^{\tau}_{n-1}},
\geo{x^{\tau}_{n} x^{\tau}_{n+1}}\right) $, which control the change of the
``directions and length'' of the connecting geodesic interpolants. For this we
use the improved incremental energy estimate \eqref{eq:EulerEstim1} which
allows us to invoke a telescope sum by inserting $\ob = x^\tau_{n+1}$.

\begin{lemma}[Controlling the incremental errors]
\label{le:error.estimate}
Let $\lambda \leq 0$, $\tau \in (0,1)$ with $\tau\lambda >-1/2$, 
$ x^{\tau}_{0}\in \A_{\kappa_{0}},$ and let
$x^{\tau}_{n} $ be defined iteratively by the MM scheme. For $T>0$ and
$\kappa\geq \kappa_0 $ assume $x^{\tau}_{n}\in \A_{\kappa}$ as long as 
$n< {T}/{\tau}+1$. Then, the error function $\Er^{\tau}$ defined in 
\eqref{error} in Proposition \ref{pr:ErrorProduc} satisfies the estimate 
\begin{equation}
\label{error.estim2}
 \|\ee^{2\last t}\Er^{\tau}\|_{L^{1}([0,T])}\leq 
   \tau \left( 4 + \tau \kappa\right) |\partial\phi|^{2}(x^{\tau}_{0}),
\end{equation}
where we recall that $\last=2\min\{0,\lambda\}-2.$
\end{lemma}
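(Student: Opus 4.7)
The plan is to bound the piecewise constant function $\Er^\tau$ pointwise using the Euler-type inequality \eqref{eq:EulerEstim1}, and then perform a weighted geometric summation in which the factor $\ee^{2\last t}$ compensates for the possible growth of $|\partial\phi|(x_n^\tau)$ guaranteed by \eqref{Euler}. Throughout write $d_n:=\sfd(x_n^\tau,x_{n+1}^\tau)$, $A:=|\partial\phi|^2(x_0^\tau)$, and $\alpha:=(1{+}\lambda\tau)^{-2}$.

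\smallskip
\textbf{Step 1: pointwise bound on $\invd^2$.} I first apply the variational inequality \eqref{eq:EulerEstim1} at the $n$-th step with the concrete observer $\ob=x_{n+1}^\tau$. Recognising that the inner product $\langle\geo{x_n^\tau x_{n-1}^\tau},\geo{x_n^\tau x_{n+1}^\tau}\rangle_{\rmu\rmp}$ appears naturally in the expansion \eqref{eq:Delta2} of $\invd^2$, and combining the result with the elementary energy drop $d_n^2\leq 2\tau[\phi(x_n^\tau)-\phi(x_{n+1}^\tau)]$ that follows directly from the minimality of $x_{n+1}^\tau$, I obtain the pointwise bound
\[
 \invd^{2}\bigl(\geo{x_n^\tau x_{n-1}^\tau},\geo{x_n^\tau x_{n+1}^\tau}\bigr)
  \leq d_{n-1}^2 -\tau\lambda\, d_n^2.
\]
Substituting this into \eqref{error} gives, for $n\geq 1$,
$\Er^\tau_n \leq \bigl(1{-}2\lambda+(\kappa{-}\lambda)/\tau\bigr)d_n^2 + d_{n-1}^2/\tau^2$.

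\smallskip
\textbf{Step 2: a priori bounds from the Euler slope estimates.} Iterating \eqref{Euler}(c) yields $|\partial\phi|(x_n^\tau)\leq (1{+}\lambda\tau)^{-n}|\partial\phi|(x_0^\tau)$, and combining with \eqref{Euler}(b) applied at step $n{+}1$ produces $d_n^2\leq \tau^2 \alpha^{n+1}A$. To translate the geometric factor $\alpha^n$ into an exponential, I use the elementary inequality $(1{-}r)^{-1}\leq \ee^{2r}$ for $r\in[0,1/2]$ with $r=-\lambda\tau$, which is admissible because of $\tau\lambda>-1/2$. This gives $\alpha\leq \ee^{-4\lambda\tau}$, and hence the crucial identity
\[
 \ee^{2\last\tau}\,\alpha \leq \ee^{(4\lambda-4)\tau}\ee^{-4\lambda\tau} = \ee^{-4\tau}<1.
\]

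\smallskip
\textbf{Step 3: weighted geometric summation.} The time integral is exactly
\[
\|\ee^{2\last t}\Er^\tau\|_{L^1([0,T])} = \tilde c(\tau)\sum_{n=0}^{\lfloor T/\tau\rfloor} \ee^{2\last n\tau}\Er^\tau_n, \quad
 \tilde c(\tau):=\int_0^\tau \ee^{2\last s}\dd s\leq \tau.
\]
Using the estimates from Steps 1--2 and the decisive bound $\ee^{2\last n\tau}\alpha^{n+1}\leq \alpha\ee^{-4n\tau}$, the summations over $d_n^2$ and $d_{n-1}^2/\tau^2$ are geometric series with ratio $\leq \ee^{-4\tau}$. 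Their totals can be controlled by $\sum_{n\geq 0}\ee^{-4n\tau} \leq \ee^{4\tau}/(4\tau)$, the latter following from $1{-}\ee^{-x}\geq x\ee^{-x}$. The $n=0$ contribution is handled directly, since $d_0^2\leq\tau^2\alpha A$ and $(1{+}\lambda\tau)^{-1}\leq 2$ yield $\Er^\tau_0\leq C\cdot A$ with an explicit constant of size $\leq 3$ to leading order in $\tau$.

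\smallskip
\textbf{Step 4: collecting terms.} Multiplying the sum by $\tilde c(\tau)\leq \tau$, the $(1{-}2\lambda)$ term contributes $O(\tau^2 A)$, the $(\kappa{-}\lambda)/\tau$ term produces the $\tau\kappa\cdot A$ contribution in the target, and the $d_{n-1}^2/\tau^2$ term together with the $n=0$ piece provides the leading $4\tau A$ contribution. Organising the constants carefully yields exactly \eqref{error.estim2}.

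\smallskip
The delicate point is Step 1, which is where the minimality of the scheme enters substantially: the naive metric estimate $\invd^2\leq d_{n-1}^2+d_n^2+2d_{n-1}d_n$ is far too coarse, and one must exploit \eqref{eq:EulerEstim1} with observer $x_{n+1}^\tau$ to cancel the $d_n^2$ terms against the $\phi$-differences using the minimality relation. The remaining work is bookkeeping of geometric sums with the correct exponential rate $\ee^{-4\tau}$, which is precisely why the modified parameter $\last=2\lambda-2$ was chosen in Proposition~\ref{pr:ErrorProduc}.
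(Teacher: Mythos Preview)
Your plan is correct and follows essentially the same route as the paper's proof: bound $\invd^2$ by applying the Euler inequality \eqref{eq:EulerEstim1} with observer $\ob=x_{n+1}^\tau$, control $d_n$ via the iterated slope estimates \eqref{Euler}, and sum a geometric series whose ratio $\ee^{2\last\tau}\alpha\leq \ee^{-4\tau}$ is forced below $1$ by the choice $\last=2\lambda-2$. Your Step~1 is in fact marginally sharper than the paper's version: the paper uses only the monotonicity $\phi(x_{n+1}^\tau)\leq\phi(x_n^\tau)$ to drop the $2\tau[\phi(x_{n+1}^\tau)-\phi(x_n^\tau)]$ term, obtaining $\invd^2\leq d_{n-1}^2+(1-\tau\lambda)d_n^2$, whereas you exploit the quantitative energy drop $d_n^2\leq 2\tau[\phi(x_n^\tau)-\phi(x_{n+1}^\tau)]$ to cancel the $d_n^2$ entirely, leaving $\invd^2\leq d_{n-1}^2-\tau\lambda d_n^2$. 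This refinement is harmless for the final bound and the remaining bookkeeping is identical.
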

\begin{GSproof}
\COLORW
\begin{proof}  Throughout the proof we use the abbreviation
$S_0=|\pl\phi|(x^\tau_0)$.

\underline{Step 1. Estimate of $\sfd_{\calX}(x^\tau_n,x^\tau_{n+1})$.} 
By \eqref{Euler2b} and \eqref{Euler2c}  we have 
\[
\sfd_{\calX}(x^\tau_{n-1},x^\tau_n) \leq \frac{\tau}{1+\lambda \tau} 
|\pl \phi|(x^\tau_{n-1}) \quad \text{and} \quad 
|\pl \phi|(x^\tau_{n}) \leq \frac1{1+\lambda \tau} |\pl \phi|(x^\tau_{n-1}). 
\] 
Because of $\lambda \tau\geq -1/2$ we have $1/(1+\lambda \tau) \leq
1+2|\lambda|\tau$ and using induction we obtain 
\begin{equation}
  \label{eq:IncremEstim}
  \sfd(x^\tau_{n-1}, x^\tau_n) 
\leq \tau\,(1{+}2|\lambda|\tau)^n\,S_0 
\leq \tau \,\ee^{2|\lambda| n\tau} \,  S_0 \quad \text{for }
n \in \N.
\end{equation} 

\underline{Step 2. Estimate of $\invd^2$.}
We use the definition of $\invd^2$ in \eqref{eq:Delta2}
  and exploit the discrete energy estimate \eqref{evi.for.interpol} with
  $\ob = x^\tau_{n+1}$:  
\begin{equation*}
  \begin{split}
    &\invd^{2}\left(\geo{x^{\tau}_{n} x^{\tau}_{n-1}},\geo{x^{\tau}_{n}
          x^{\tau}_{n+1}}\right)   \overset{\text{def}}=  
    \sfd_{\calX}^{2}(x_{n-1},x_{n})+\sfd^{2}(x_{n},x_{n+1})+2
        \langle\geo{x^{\tau}_{n} x^{\tau}_{n-1}},\geo{x^{\tau}_{n}
          x^{\tau}_{n+1}}\rangle_{\rmu\rmp}
\\
    &\overset{\eqref{eq:EulerEstim1}}{\leq}\
    \sfd_{\calX}^{2}(x^{\tau}_{n+1},x^{\tau}_{n}) 
   +\sfd_{\calX}^{2}(x^{\tau}_{n},x^{\tau}_{n-1})
    +2\tau \left(\phi(x_{n+1}^{\tau})-\phi(x_{n}^{\tau})\right)
    -\tau \lambda\sfd_{\calX}^{2}(x_{n+1}^{\tau},x_{n}^{\tau}). 
\\ 
& \ \ \leq \left( 1 +|\lambda|\tau\right) \sfd_{\calX}^{2}(x^{\tau}_{n+1},x^{\tau}_{n}) +
  \sfd_{\calX}^{2}(x^{\tau}_{n},x^{\tau}_{n-1}) \leq \left( 2 +|\lambda|\tau\right) 
   \tau^2 \,\ee^{4|\lambda|n\tau} S_0^2,
  \end{split}
\end{equation*}
where the last two estimates follow from $\phi(x^\tau_{n+1}) \leq \phi(x^\tau_n)$
and \eqref{eq:IncremEstim}, respectively.

\underline{Step 3. Full estimate.} Using Step 1, for the definition of $\Er^\tau_0$ in
\eqref{error} we find  
\[
\Er^\tau_0 \leq (1{+}2|\lambda|) \tau^2(1{+}2|\lambda|\tau)^2 S^2_0 + (2 {+}
2|\lambda|\tau) S^2_0 \leq 12 S_0^2, 
\]
where we used $0<\tau<1$ and $-1/2 < \tau\lambda<0$. For $n\in \N$
the definition of $\Er^\tau_n$ gives 
\[
\Er^\tau_n \leq \left( 1 {+} 2|\lambda| {+} \kappa/\tau\right)
\tau^2 \ee^{4|\lambda| n \tau} S_0 + \left( 2{+}|\lambda| \tau\right)
\ee^{4|\lambda | n\tau} S^2_0 \leq \left( 6 + \tau \kappa\right) \ee^{4
  |\lambda| n\tau} S_0^2.
\] 

Recalling $\last = 2\min\{0,\lambda\}-2 \leq -2$ we find 
\begin{align*}
\big\| \ee^{2\last t} \Delta^\tau\big\|_{\rmL^1([0,T])} & \leq
 \sum_{n=0}^{\lfloor T/\tau \rfloor+1} \int_{n\tau}^{(n+1)\tau} \ee^{2\last t} \Er^\tau_n \d t 
 \leq \sum_{n=0}^{\lfloor T/\tau \rfloor+1} \frac\tau{2|\last|}\, 
  \ee^{2\last n\tau} \,\Er^\tau_n
\\
& \leq \frac{\tau}{4}\left(12+ \tau \kappa\right)S_0^2 \sum_{n=0}^\infty
\ee^{-4n\tau} \leq \tau \left( 4+ \tau \kappa\right) S_0^2,
\end{align*}
which is the desired result.  
\end{proof}
\COLORWend
\end{GSproof}

\subsection{Proof of the main abstract result in Theorem
  \ref{th:MainExistTheo}}\label{su:ProofMainExist} 
 
Having prepared the above the preliminary estimates for the solutions
$x^\tau_n$ of the minimizing movement scheme, we are now ready to give
the proof of the abstract existence result. The important point in the proof is
that the value $\kappa$ of the $\kappa$-concavity of $\sfd^2$ is occurring only
in a few places that are well controlled. In particular, it is needed only on
the time-discrete level (see e.g.\ \eqref{akas} and \eqref{error}), 
but it disappears in the EVI 
formulation.\bigskip

\noindent\begin{proof}[Proof of Theorem \ref{th:MainExistTheo}]
\mbox{} \\
Let $\lambda$ such that $\phi$ is geodesically $\lambda$-convex, then $\phi$ is
also geodesically $\min\{\lambda,0\}$-convex. As before we set
$\last=2\min\{\lambda,0\}-2$.

\underline{Step 1. Limit passage on approximate solutions $\xx^{\tau_k}$.} 
We now exploit the assumptions (A1) to (A3) of Theorem \ref{th:MainExistTheo}.  
For a given initial point $x_0 \in \cup_{\kappa \in \N}\A_{\kappa}$ there
exist $\kappa_0\in \N$ such that $T(x_0,\kappa)>0$ for all $\kappa>\kappa_0$.    
Fixing a $\kappa_*>\kappa_0$ we define time steps $\tau_k= T/k$ 
for $k\in \N$, where $T=T(x_0,\kappa_*)$ from assumption (A2). Hence, we have
$x^{\tau_k}_n \in \A_{\kappa_*} $ as long as $n  < T/\tau_k +1$. By
construction of the geodesic interpolants in 
\eqref{geodesic.interpolation} the function
$\xx^{\tau_k}:[0,T] \to \calX$ satisfies $\xx^{\tau_k}(t)\in\A_{\kappa_*}^{\Geod}$
for all $t\in[0,T]$.

Thus, we are able to apply Corollary \ref{co:error} and Lemma
\ref{le:error.estimate} and obtain (for $\tau_k,\tau_{k'}\leq 1$) 
\begin{equation}
	\sup_{t\in[0,T]}\sfd_{\mathcal{X}}(\xx^{\tau_{k}}(t),\xx^{\tau_{k'}}(t))\leq
        \ee^{-\last T} 2\left((\tau_{k}{+}\tau_{k'})(4{+}\kappa_*)\right)^{1/2}|\pl \phi|(x_0).
\end{equation}
Therefore the curves $\xx^{\tau_{k}}:[0,T]\to \calX$ converge uniformly in the
compact and hence complete sublevel $\bigset{ y \in \calX}{ \phi(y) \leq
  \phi(x_0)}$ to a continuous limiting curve $\xx:[0,T] \to  \calX$ with
$\xx(0)=x_0$. 

\underline{Step 2. $\xx$ is the unique EVI$_{\last}$ solution.}  We now return to
the approximate EVI formulation \eqref{evi.for.interpol} for the interpolants
$\xx^{\tau_k}$. 

For a general observer, by assumption A3, for $\ob  \in\overline{\dom(\phi)}$ we can choose a
sequence $(y_m)_{m\in \N}$ with  
\[
y_{m}\in\A_{\kappa_{m}}\subset\calX, \quad 
y_{m}\overset{m \to \infty}{\longrightarrow} \ob , \quad  
\phi(y_{m})\overset{m \to
  \infty}{\longrightarrow}\phi(\ob ).
\]
Without loss of generality we may assume $\kappa_* \leq \kappa_m$. 
Choosing $\ob=y_m \in \A_{\kappa_m}\subset \A^{\Geod}_{\kappa_m}$ in
\eqref{evi.for.interpol} for $\xx^{\tau_k}$ and
integrating over the interval $(s,t)$ we find 
\begin{equation}
  \begin{split}
    &\frac{1}{2}\sfd_{\mathcal{X}}^{2}(\xx^{\tau_{k}}(t),y_{m})
      -\frac{1}{2}\sfd_{\mathcal{X}}^{2}(\xx^{\tau_{k}}(s),y_{m})
         +\int_{s}^{t}\left(\phi(\xx^{\tau_{k}}(r))+\frac{\last}{2}
           \sfd_{\mathcal{X}}^{2}(\xx^{\tau_{k}}(r),y_{m})\right)\d r\\
   &\leq(t{-}s)\phi(y_{m}) + (t{-}s) \ee^{-2\last t} \tau_k 
      (4{+}\tau_k \kappa_{m})\,|\partial\phi|^{2}(x_{0}).
\end{split}
\end{equation} 
Keeping $m$ fixed, taking $k \to \infty $, and using the lower
semicontinuity of $\phi,$ we obtain 
\begin{equation*}
  \frac{1}{2}\sfd_{\mathcal{X}}^{2}(\xx(t),y_{m})-\frac{1}{2}\sfd_{\mathcal{X}}^{2}(\xx(s),y_{m}) 
+\int_{s}^{t}\left(\phi(\xx(r))+\frac{\last}{2} 
\sfd_{\mathcal{X}}^{2}(\xx(r),y_{m})\right)\d r
 \leq (t{-}s)\phi(y_{m}), 
\end{equation*}
Note that $\kappa_m$ has disappeared because of $\tau_k\to 0$ for $k\to
\infty$. Now $m \to \infty$ yields 
\begin{equation*}
  \frac{1}{2}\sfd_{\mathcal{X}}^{2}(\xx(t),\ob )-\frac{1}{2}\sfd_{\mathcal{X}}^{2}(\xx(s),\ob
  )+\int_{s}^{t}\left(\phi(\xx(r))+\frac{\last}{2}\sfd_{\mathcal{X}}^{2}(\xx(r),\ob )\right)
  \d r\leq (t{-}s)\phi(\ob ),
\end{equation*}
where we have convergence on the left-hand side and use the lsc of $\phi$ on
the right-hand side.
As the inequality trivially holds for $\ob \in
\calX\setminus\overline{\dom(\phi)}$, we have shown that
$\xx:[0,T]$ is an 
EVI$_{\last}$ solution. The uniqueness follows by applying Corollary
\ref{co:error} with $\Delta^\tau= \Delta^\sigma=0$. 

\underline{Step 3. Extension to $t\in [0,T_\infty(x_0))$.} In the previous
step the solution $\xx:[0,T(x_0,\kappa_*)]\to \calX$ was well-defined and
unique. However, $\kappa_* > \kappa_0(x_0)$ was arbitrary. Hence, we can extend
the solution uniquely to any interval $[0,T(x_0,\kappa)]$ with
$\kappa>\kappa_0$. Taking the limit $\kappa \to \infty$ we obtain a unique
solution on $[0,T_\infty(x_0)) \subset \cup_{\kappa >\kappa_0}
[0,T(x_0,\kappa)]$. 
 
\underline{Step 4. Complete EVI flow on $\overline{\dom(\phi)} $.} We now
further assume $T_\infty(x_0)=\infty$ for all $x_0 \in \cup_\kappa \A_\kappa$.
We now consider an arbitrary $x_0 \in \overline{\dom(\phi)} $. Since
$\overline{\cup \A_{\kappa}}=\overline{\dom(\phi)},$ there exists a sequence
$(x^m_0)_{m\in \N}$ with $x^{m}_0\in \A_{\kappa_m} \subset \cup_{\kappa\in\N} 
\A_{\kappa}$ and $x^{m}\overset{m \to \infty}{\longrightarrow} x$. Define
$\xx^{m}:[0,\infty)\to X$ to be the unique EVI$_{\last}$ solution starting in
$x^{m}_0$. By \eqref{eq:expanding} in Proposition \ref{pr:EVI-int}(C),
we have
\begin{equation*}
	\ee^{\last t}\sfd_{\mathcal{X}}(\xx^{m}(t),\xx^{m'}(t))\leq  \sfd_{\mathcal{X}}(\xx^{m}_0,\xx^{m'}_0)
        \quad  \text{for all }  t\geq 0.
\end{equation*}
Thus, for all $T>0$ the sequence $\xx^{m}$ is Cauchy in the space
$C([0,T];\calX)$. Therefore, it converges locally uniform to a limit
$ \xx : [0,\infty) \to \calX$, which satisfies the initial condition
$\xx(0)=x_0= \lim x^m_0$. Since each curve $\xx^m$ satisfies the
integrated form \eqref{eq:EVI.inte} of EVI$_{\last}$, the lower semicontinuity
of $\phi$ guarantees that the limit curve $\xx$ is again an EVI solution.

\underline{Step 5. Correcting $\last$ back to $\lambda$.} Above we have
constructed EVI$_{\last}$ solutions, but our functional $\phi$ is geodesically
$\lambda$-convex and $\lambda > \last$. To recover the correct $\lambda$, 
we can apply \cite[Cor.\,3.12]{MurSav20GFEV} because we know that
$\xx$ is an EVI$_{\last}$ solution for $(\calX,\sfd,\phi)$ and that
$\phi$ is $\lambda$-convex with $\lambda \geq \last$. Hence, $\xx$ is
also EVI$_{\lambda}$ solution. 
\end{proof}

\section{Semiconcavity and EVI flows for 
$(\calM(\YX),\E,\HK)$ and   $(\calP(\YX),\E,\SHK)$ } 
\label{se:SemicEVI}

We now combine the theory developed in the previous two sections, namely the
existence result for EVI flows provided in Theorem \ref{th:MainExistTheo} with
the semiconcavity results established in \cite[Sec.\,4]{LasMie19GPCA}. 

\subsection{Semiconcavity of $\frac12 \HK^2$ and $\frac12\SHK^2$}

In order to apply Theorem \ref{th:MainExistTheo} in the case of $\HK, \SHK,$ we
need to provide some semiconcavity results. More specifically, we need to prove
that point (A1) is satisfied for a sequence of sets $A_{\kappa}.$ Before we
proceed, we will define the following two collections of sets.  For
$\delta \in (0,1)$ we define the set
\begin{equation}
\label{eqdef:Mdelta}
\M_{\delta}(\YX)=\bigg\{\mu\in\M(\YX) : \mu\ll \Ld , \  \delta\leq\frac{d\mu}{d \Ld }(x)\leq\frac{1}{\delta}, \text{ for }  \Ld \text{-a.e.\ } x\in X \bigg\}.
\end{equation}
For positive numbers $\sfd_{1},\sfd_{2},$ we also define
\begin{equation}
\label{eqdef:Md}
	\widetilde{\M}_{\sfd_{1},\sfd_{2}}(\YX)=\bigg\{\mu\in\M(\YX)
          : \forall x\in X:\ \sfd_{2}\leq
	\frac{\mu\left(B\left(x,\sfd_{1}\right)\right)} 
         { \Ld (B\left(x,\sfd_{1}\right))} 
	\leq\frac{1}{\sfd_{2}} \bigg\}.
\end{equation} 
It is straightforward to see that for all $\sfd_{1}>0$ it holds
$\M_{\delta} (\YX) \subset
\widetilde{\M}_{\sfd_{1},\delta}(\YX).$ Furthermore all
elements in $\M_{\delta}(\YX)$ have total mass
bounded by $\frac{1}{\delta} \Ld (\YX)$.

In \cite[Thm.\,4.8]{LasMie19GPCA}, it was stated and proved that for a set
$X \subset \R^d$ that is compact, convex and with nonempty interior, there
exists $\kappa(\delta)\in\mathbb{R},$ such that $(\M(\YX),\HK)$ is
$\kappa$-concave on $\M_{\delta}(\YX).$ We clarify at this point,
that in practice Theorem 4.8 was stated for more general metric spaces and for
reference measures $\nu$ that are doubling. However for simplification
we are going to recall any theorems or lemmas we need from \cite{LasMie19GPCA}
directly adapted to to our setting, avoiding all the extra generality related
to doubling measures and abstract metric spaces. We remind the reader, that for
a compact, convex set $X \subset \R^d $ with nonempty interior, the Lebesgue
measure is doubling and the Euclidean distance is 2-concave. Although
\cite[Thm.\,4.8]{LasMie19GPCA} was stated in this weaker form, the given
proof provides a stronger result, namely the following:

\begin{theorem}[$K$-concavity for $(\M(\YX),\HK) $]\label{thm:K.Semi} 
  Let $X \subset \R^d$ be a compact, convex set with nonempty interior. Then,
  there exists $\kappa(\delta)\in\mathbb{R},$ such that $\HK$
  is $\kappa$-concave on $\M_{\delta}(\YX),$ with respect to
  observers in $\M^{\Geod}_{\delta}(\YX).$
\end{theorem}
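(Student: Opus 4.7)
My plan is to follow the strategy used in the proof of \cite[Thm.\,4.8]{LasMie19GPCA}, exploiting the cone-space formulation from Theorem \ref{thm:OTcone}, but to trace through that argument carefully and identify exactly which structural assumptions are actually used. Because elements of $\M_\delta(\YX)$ satisfy $\delta \leq \rho \leq 1/\delta$, they automatically lie in a suitable $\widetilde{\M}_{\sfd_1,\sfd_2}(\YX)$, so doubling-type inputs are available. The extra content to verify is that the proof only ever uses the observer $\ob$ through quantities that are controlled when $\ob \in \M^{\Geod}_\delta(\YX)$, i.e.\ when $\ob = \geo{\nu_0 \nu_1}(s)$ for some $\nu_0,\nu_1 \in \M_\delta(\YX)$.

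First I would use Theorem \ref{injective} to obtain injective optimal dilation-transport couples $(q,\bfT)$ for $(\mu_0,\mu_1)$ and similarly couples associated with $\mu_i$ versus $\ob$. The density bound gives $\delta \leq q^2(x) \rho_0(\bfT^{-1}(x)) \leq 1/\delta$ pointwise, which both bounds $q$ away from $0$ and $\infty$ and forces $|\bfT(x)-x| < \pi/2 - \eta(\delta)$ with $\eta(\delta)>0$; the same uniform sub-threshold estimate holds for any $\ob$ belonging to a geodesic between two measures in $\M_\delta(\YX)$, since the dilation factor along such a geodesic is itself bounded in terms of $\delta$ by the explicit formulas for $\HK$-geodesics from \cite{LiMiSa18OETP}. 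This is the place where the observer condition $\ob \in \M^{\Geod}_\delta(\YX)$ is genuinely used.

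Next I would lift the endpoints and the observer to the cone $\mfC$, and transport the problem into the Wasserstein space $(\calP_2(\mfC),\sfW_{\sfd_\mfC})$ via Theorem \ref{thm:OTcone}. On $\mfC$, the squared cone distance $\sfd_\mfC^2$ is semiconcave on any subset in which the base-space distance stays strictly below $\pi/2$ and the radial coordinates remain in a compact subinterval of $(0,\infty)$; both hold uniformly thanks to the bounds from the previous paragraph. Combining this with the standard fact that Wasserstein geodesics inherit pointwise semiconcavity of the squared cost (with a loss governed by the $L^\infty$-norm of the displacement), the uniform bounds yield a constant $\kappa = \kappa(\delta)\in \R$ such that $t \mapsto \tfrac12 \sfW_{\sfd_\mfC}^2(\lambda_t,\lambda_{\ob})$ is $\kappa \sfW_{\sfd_\mfC}^2(\lambda_0,\lambda_1)$-concave along the lifted geodesic $\lambda_t$. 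Projecting down via $\mathfrak{P}$ and using the identity $\sfW_{\sfd_\mfC}^2(\lambda_0,\lambda_1)=\HK^2(\mu_0,\mu_1)$ (and similarly for the observer) gives the desired semiconcavity of $\tfrac12 \HK^2$ along $\geo{\mu_0\mu_1}$ with respect to $\ob$.

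The main obstacle I anticipate is the last paragraph: one must justify that the geodesic lifts $\lambda_t$ and $\lambda_{\ob}$ remain in a set on which $\sfd_\mfC^2$ is uniformly semiconcave with the same constant, independently of the fact that $\mu_t$ for $t\in(0,1)$ need not itself lie in $\M_\delta(\YX)$. This requires using the injectivity of $\bfT$ together with the scaling formula \eqref{eq:ScalHK} to keep track of how mass is redistributed along the geodesic, and to check that the effective radial coordinates of the cone lifts stay in a $\delta$-dependent compact interval away from $0$ and $\infty$. Once that uniform bound is in place, the value $\kappa(\delta)$ reads off from the semiconcavity modulus of $\sfd_\mfC^2$ on the corresponding compact subset, and the theorem follows.
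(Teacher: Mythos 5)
Your overall strategy is the one the paper actually follows: lift to the cone via Theorem \ref{thm:OTcone}, use pointwise semiconcavity of $\sfd_{\mfC}^2$ on a region where base distances stay strictly below $\pi/2$ and radial coordinates stay in a compact subinterval of $(0,\infty)$, integrate against a coupling, and project back with $\mathfrak{P}$. You also correctly isolate the crux: the interior points $\geo{\mu_0\mu_1}(t)$ need not lie in $\M_\delta(\YX)$, so one must still control their cone lifts. However, your proposed resolution of that crux is the weak point. Tracking radial coordinates ``using the injectivity of $\bfT$ together with the scaling formula \eqref{eq:ScalHK}'' does not obviously work: the scaling formula only rescales total mass, and injectivity of a transport map between the endpoints says nothing directly about the Lebesgue densities of the intermediate measures. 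The paper instead invokes two prepared lemmas: Lemma \ref{MdeltatoMd1d2}, which shows that every constant-speed geodesic between measures in $\M_\delta(\YX)$ (hence also every observer in $\M_\delta^{\Geod}(\YX)$) stays in $\widetilde{\M}_{\sfd_1,\sfd_2}(\YX)$, i.e.\ satisfies uniform local mass-density bounds; and Lemma \ref{exist Lambdas}, which then produces lifts $\lambda_0,\lambda_1,\lambda_2,\lambda_t$ supported on $\mfC[R_{\min},R_{\max}]$ realizing the relevant $\HK$ distances. Without a substitute for these, your argument stalls exactly where you flag the obstacle.

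Two further steps are glossed over. First, the ``standard fact that Wasserstein geodesics inherit pointwise semiconcavity of the squared cost'' is precisely the nontrivial part: one needs Lisini's representation of the $\sfW_{\sfd_\mfC}$-geodesic as a measure on geodesics of $\mfC$, a gluing with the optimal plan to the observer's lift, and the uniform bound $\sfd_X(x_0,x_1)\le\mathfrak D<\pi/2$ from Lemma \ref{unif.bounds.density} to guarantee that this glued plan is concentrated on configurations where the pointwise inequality \eqref{eq:C.Kconcave} actually holds. Second, the lifts from the cone formulation are constructed for one intermediate time $t$ at a time, so one must either verify that $t\mapsto\lambda_t$ can be chosen as a single $\sfW_{\sfd_\mfC}$-geodesic, or (as the paper does) work with the restriction-to-subintervals characterization of $K$-concavity from \cite[Cor.\,2.24(iii)]{LasMie19GPCA}, applying the three-point inequality on each $[t_1,t_2]$ separately. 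As written, your proposal treats $\lambda_t$ as a globally defined geodesic lift without justification.
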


\noindent
At the moment of writing \cite{LasMie19GPCA}, we were not aware that this
version will be useful, however now this property along with LAC condition, is exactly the
assumption (A1) in our Theorem \ref{th:MainExistTheo}. LAC condition was proven in \cite[Theorem 4.1]{LasMie19GPCA}. Now, we will
recall some lemmas from there and provide a short proof of Theorem~\ref{thm:K.Semi}.

\begin{lemma}[{\cite[Lem.\,4.9]{LasMie19GPCA}}]
\label{unif.bounds.density}  
Let $X \subset \R^d$ be a compact, convex set with nonempty
interior. There 
  exists $0<C_{\min}\leq C_{\max}$ such that for every
  $\mu_{0},\mu_{1}\in \widetilde{\M}_{\sfd_{1},\sfd_{2}}(X)$
  and any optimal plan $\bfH_{01}$ for
  $\LET_{\dd}(\,\cdot\,;\mu_0,\mu_1)$ we have
\begin{equation}
\label{bounds} 
C_{\min}\leq\sigma_{i}(x_{i})\leq C_{\max},\hspace{10pt}
\eta_{i}\text{-a.e.\ } 
\end{equation}
where $\eta_i= \pi^i_\# H_{01}= \sigma_i \mu_i$ for $i=0,1$.  Furthermore, any
transportation happens in distances strictly less than some $\frac{\pi}{2},$
i.e.\ there  exists $\mathfrak{D}<\frac{\pi}{2}$ that depends only on
$\sfd_{1},\sfd_{2},$ such that $\sfd_{X}(x_{0},x_{1})\leq \mathfrak{D}$ for
$H_{01}$ almost every $(x_{0},x_{1}).$
\end{lemma}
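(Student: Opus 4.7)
The strategy is to use the LET formulation \eqref{def:let} together with its Euler--Lagrange conditions, exploiting the doubling-like lower and upper bounds on $\mu_i$ to promote integral controls into pointwise ones.

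First, I would construct an explicit competitor plan $\bfH^\star_{01}$ by subdividing $X$ into finitely many cells of diameter strictly less than $\pi/2$ (possible because $X$ is bounded, shrinking $\mathsf{d}_1$ if necessary) and transporting $\mu_0$-mass to a scaled copy of $\mu_1$-mass within each cell, with dilation factors controlled purely by $\mathsf{d}_2$. This yields a universal upper bound $\LET(\bfH_{01};\mu_0,\mu_1)\le M(\mathsf{d}_1,\mathsf{d}_2,X)$, which in turn bounds each of the three nonnegative summands in \eqref{def:let}: the entropies $\int F(\sigma_i)\, \d \mu_i$ and the transport cost $\iint \ell(d_X)\, \d \bfH_{01}$.

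Second, I would invoke the pointwise optimality conditions for $\LET$ from \cite[Sec.\,6]{LiMiSa18OETP}, which give that $\bfH_{01}$-almost everywhere one has $\sigma_0(x_0)\,\sigma_1(x_1) = \cos_{\pi/2}^2(d_X(x_0,x_1))$ with $d_X(x_0,x_1)<\pi/2$. Writing $\phi_i := -\log\sigma_i$, the pair $(\phi_0,\phi_1)$ is $\ell$-conjugate in the Kantorovich sense; since $\ell$ is smooth on $[0,\mathfrak{D}]$ for any $\mathfrak{D}<\pi/2$, the $\phi_i$ inherit a uniform modulus of continuity on that region. Combined with the lower bound on $\mu_i(B(x,\mathsf{d}_1))$ provided by $\widetilde{\M}_{\mathsf{d}_1,\mathsf{d}_2}(X)$, the integrated bound on $F(\sigma_i)$ translates into a mean bound on each ball of radius $\mathsf{d}_1$, and the modulus of continuity then upgrades this average to a pointwise bound $C_{\min}\le \sigma_i(x)\le C_{\max}$ on $\supp(\eta_i)$.

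Third, the transport-distance bound follows immediately: from $\sigma_0\sigma_1=\cos^2 d_X$ and $\sigma_i \ge C_{\min}$, we get $\cos^2 d_X \ge C_{\min}^2$ on $\supp(\bfH_{01})$, so $d_X(x_0,x_1)\le \arccos(C_{\min}) =: \mathfrak{D} < \pi/2$. The main obstacle is step two, namely the passage from an integral bound on $F(\sigma_i)$ to pointwise bounds on $\sigma_i$. The essential mechanism is that $\ell$-conjugacy of $\phi_i$ produces a controlled modulus of continuity on sets where $d_X$ is bounded away from $\pi/2$, and this modulus, combined with the quantitative mass lower bound on small balls from $\widetilde{\M}_{\mathsf{d}_1,\mathsf{d}_2}(X)$, allows a local averaging argument to extract pointwise values of $\phi_i$ from integral ones, closing the loop.
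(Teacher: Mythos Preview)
The paper does not prove this lemma; it merely quotes it from \cite[Lem.\,4.9]{LasMie19GPCA} and uses it as a black box in the proof of Theorem~\ref{thm:K.Semi}. So there is no proof in the present paper to compare against.

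On the merits of your sketch: the ingredients are the right ones (a competitor bound for $\LET$, the optimality relation $\sigma_0(x_0)\sigma_1(x_1)=\cos^2 d_X(x_0,x_1)$, $\ell$-conjugacy of the potentials, and the ball--mass bounds from $\widetilde{\M}_{\sfd_1,\sfd_2}$), but the logical order is circular. In your step two you want a uniform modulus of continuity for $\phi_i=-\log\sigma_i$ coming from $\ell$-conjugacy, and you explicitly say this holds ``on sets where $d_X$ is bounded away from $\pi/2$''. But a \emph{uniform} bound $d_X\le \mathfrak D<\pi/2$ is exactly the conclusion of step four, which in turn relies on the pointwise lower bound $\sigma_i\ge C_{\min}$ produced in step three. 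Since $\ell'(r)=2\tan r\to\infty$ as $r\uparrow \pi/2$, the $c$-transform gives no usable modulus of continuity until you already know the transport distance is uniformly away from $\pi/2$; the a~priori strict inequality $d_X<\pi/2$ from the optimality conditions is not enough.

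A way to break the loop is to obtain the lower bound $\sigma_i\ge C_{\min}$ first, by a direct argument that does not use any regularity of $\phi_i$. Suppose $\sigma_0(x_0)$ is very small, i.e.\ $\phi_0(x_0)$ is very large. The constraint $\phi_0(x_0)+\phi_1(y)\le \ell(d_X(x_0,y))$ then forces $\phi_1(y)$ to be very negative, hence $\sigma_1(y)$ very large, for every $y\in B(x_0,\sfd_1)$. But $\mu_1\big(B(x_0,\sfd_1)\big)\ge \sfd_2\,\mathcal L^d\big(B(x_0,\sfd_1)\big)$ by definition of $\widetilde{\M}_{\sfd_1,\sfd_2}$, so the entropy $\int F(\sigma_1)\,\d\mu_1$ would exceed your competitor bound $M$. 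This yields $\sigma_i\ge C_{\min}$ directly; then $\cos^2 d_X=\sigma_0\sigma_1\ge C_{\min}^2$ gives $\mathfrak D=\arccos C_{\min}$, and $\sigma_i\le 1/C_{\min}=:C_{\max}$ follows from $\sigma_0\sigma_1\le 1$. With this reordering your outline becomes a genuine proof.
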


\begin{lemma}[{\cite[Lem.\,4.10]{LasMie19GPCA}}] \label{MdeltatoMd1d2}
  Let $X \subset \R^d$ be a compact, convex set with nonempty
interior and $\M_{\delta}(\YX)$ be as in
  \eqref{eqdef:Mdelta}. Then, for each $\delta>0$ there exist
  $\sfd_{1} \in (0,\frac{\pi}{2})$ and $\sfd_2>0$ such that any
  constant-speed geodesic $\bf{\mu}_{01}$ connecting $\mu_{0}$ to $\mu_{1}$
  with $\mu_0,\mu_1\in\M_{\delta}(\YX)$
  satisfies $\bfmu_{01}(t)\in \widetilde{\M}
  _{\sfd_{1},\sfd_{2}}(\YX)$ for all $t\in [0,1].$
\end{lemma}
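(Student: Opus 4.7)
The plan is to propagate the uniform controls of Lemma \ref{unif.bounds.density} along the geodesic by exploiting the cone formulation of $\HK$. For $\mu_0,\mu_1\in\M_\delta(X)$, Lemma \ref{unif.bounds.density} provides constants $C_{\min}, C_{\max}$ and $\mathfrak{D}<\pi/2$, depending only on $\delta$, so that any optimal LET plan $H_{01}$ has dilations $\sigma_i\in[C_{\min},C_{\max}]$ and is supported in $\{(x_0,x_1):|x_0{-}x_1|\leq\mathfrak{D}\}$. Using Theorem \ref{thm:OTcone}, I would represent the geodesic as $\mu_t=\mathfrak{P}\lambda_t$, where $\lambda_t\in\mathcal{P}_2(\mathfrak{C})$ is the Wasserstein geodesic on the cone joining the lifts $\lambda_i=\sigma_i^{-1}\mu_i\otimes\delta_{\sigma_i^{1/2}}$, carried by an optimal plan $\Lambda_{01}$ concentrated on cone geodesics.

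I would then extract uniform bounds along each cone geodesic $t\mapsto(x_t,r_t)$. The base point $x_t$ lies on the $X$-geodesic between $x_0$ and $x_1$, while the squared radius obeys $r_t^2=(1{-}t)^2 r_0^2+t^2 r_1^2+2t(1{-}t)r_0 r_1\cos|x_0{-}x_1|$. Combining $r_i^2=\sigma_i\in[C_{\min},C_{\max}]$ with $|x_0{-}x_1|\leq\mathfrak{D}<\pi/2$ yields two-sided bounds $c_-(\delta)\leq r_t^2\leq c_+(\delta)$ uniformly in $t$ and in the pair $(x_0,x_1)$. Choosing $\sfd_1\in[\mathfrak{D},\pi/2)$, the upper bound on $\mu_t(B(x,\sfd_1))$ follows from the pushforward identity $\mu_t(B)=\int r_t^2\,\mathbb{1}_{\{x_t\in B\}}\,\d\Lambda_{01}$: whenever $x_t\in B(x,\sfd_1)$ one has $x_0,x_1\in B(x,\sfd_1{+}\mathfrak{D})$, so combining $r_t^2\leq c_+$ with the density bound $\sigma_i^{-1}\rho_i\leq(\delta C_{\min})^{-1}$ on the base projections of the $\lambda_i$-marginals gives $\mu_t(B(x,\sfd_1))\leq C(\delta)\,\calL^d(B(x,\sfd_1{+}\mathfrak{D}))$, which the doubling property of $\calL^d$ reduces to a constant multiple of $\calL^d(B(x,\sfd_1))$.

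The main obstacle is the lower bound. The naive strategy of isolating transport pairs with both endpoints near $x$ is insufficient, because the optimal plan can push almost all the mass near $x$ to a distant region. My plan is therefore to strengthen the conclusion to a pointwise Lebesgue-density bound: show that the cone-geodesic map $(z_0,z_1)\mapsto x_t(z_0,z_1)$ is essentially injective on the support of $\Lambda_{01}$ and carries a Jacobian uniformly controlled by $C_{\min},C_{\max},\mathfrak{D}$. The change-of-variables formula then gives $\mu_t\ll\calL^d$ with a density $\rho_t$ trapped between positive constants $c_-'(\delta)\leq\rho_t\leq c_+'(\delta)$ almost everywhere on $X$, and the lower estimate $\rho_t\geq c_-'$ directly yields $\mu_t(B(x,\sfd_1))\geq c_-'\,\calL^d(B(x,\sfd_1))$, fixing an admissible $\sfd_2=c_-'$ and completing the proof.
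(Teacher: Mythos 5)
The paper offers no proof of this lemma; it is quoted from \cite[Lem.\,4.10]{LasMie19GPCA}, so your argument has to stand on its own. Your upper bound does: on the support of the optimal cone plan one has $r_t^2\le\max\{r_0^2,r_1^2\}\le C_{\max}$ and $r_0^2\ge C_{\min}$, the base point moves by at most $\mathfrak{D}$, hence $\mu_t(B(x,\sfd_1))\le (C_{\max}/C_{\min})\,\mu_0(B(x,\sfd_1{+}\mathfrak{D}))\le \big(C_{\max}/(C_{\min}\delta)\big)\,\calL^d(B(x,\sfd_1{+}\mathfrak{D}))$, and the doubling property of $\calL^d$ finishes it.

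The lower bound is where the proposal breaks. You replace the direct argument by the claim that the intermediate map $(z_0,z_1)\mapsto x_t$ has a Jacobian ``uniformly controlled by $C_{\min},C_{\max},\mathfrak{D}$'' and deduce a pointwise bound $\rho_t\ge c_-'(\delta)$. The data only control the Jacobian \emph{determinant} of the endpoint map (via $q^2\rho_0=\rho_1(\bfT)\,|\det D\bfT|$), not its individual singular values, and the determinant of the interpolated map is not bounded above by those of the endpoints. Already for a pure Wasserstein geodesic in $d=2$: an optimal map with $D\bfT=\mathrm{diag}(M,1/M)$ has unit Jacobian, while $\det\big((1{-}t)I+tD\bfT\big)\approx M/4$ at $t=1/2$, so the midpoint density collapses as $M\to\infty$ even though both endpoint densities lie in $[\delta,1/\delta]$. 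This is exactly why the lemma is stated for the ball-averaged class $\widetilde{\M}_{\sfd_1,\sfd_2}$ at the macroscopic scale $\sfd_1$ rather than for a pointwise class $\M_{\delta'}$: the statement you are trying to prove is stronger than the lemma and false in general. Moreover, the ``naive strategy'' you dismiss is the right one once it is set up one-sidedly, rather than by requiring both endpoints of a coupled pair to lie near $x$: by Lemma \ref{unif.bounds.density} every coupled pair satisfies $|x_0{-}x_1|\le\mathfrak{D}<\pi/2$, and $r_t^2\ge\big((1{-}t)^2{+}t^2\big)\min\{r_0^2,r_1^2\}\ge \tfrac12 C_{\min}$, so $r_t^2/r_0^2\ge C_{\min}/(2C_{\max})$. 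Hence the mass starting in $B(x,\epsilon)$, which is at least $\delta\,\calL^d(B(x,\epsilon))$, arrives at time $t$ inside $B(x,\epsilon{+}\mathfrak{D})$ carrying at least the fraction $C_{\min}/(2C_{\max})$ of its initial amount. Choosing $\epsilon>0$ with $\sfd_1:=\epsilon+\mathfrak{D}<\pi/2$ gives $\mu_t(B(x,\sfd_1))\ge \tfrac{\delta\,C_{\min}}{2C_{\max}}\,\calL^d(B(x,\epsilon))$, which is a fixed multiple of $\calL^d(B(x,\sfd_1))$; this fixes an admissible $\sfd_2$ and closes the argument without any Jacobian control.
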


From Lemma \ref{MdeltatoMd1d2} we obtain
\[
\M_{\delta}(X)\subset \GeoCov{\M_{\delta}}(X) \subset \widetilde{\M}_{\sfd_{1},\sfd_{2}}(X).
\]
  
For the proof of  \cite[Thm.\,4.8]{LasMie19GPCA} also 
 the following result is used. 

\begin{lemma}[{\cite[Lem.\,4.11]{LasMie19GPCA}}] 
\label{exist Lambdas}
Let $X \subset \R^d$ be a compact, convex set with nonempty interior and
$\widetilde{\M}_{\sfd_{1},\sfd_{2}}(\YX)$ be as in \eqref{eqdef:Md}. Then,
there exist $R_{\min},R_{\max}>0$ that depend on $\sfd_{1},\sfd_{2},$ such that
for $\mu_{0},\mu_{1}$ with
$\bm{\mu}_{01}(t)\in\widetilde{\M}_{\sfd_{1},\sfd_{2}}(\YX)$ and
$\mu_{2}\in\widetilde{\M}_{\sfd_{1},\sfd_{2}}(\YX)$ we can find measures
$\lambda_{0},\lambda_{1},\lambda_{2},\lambda_{t} \in \calP_{2}(\mathfrak{C}
[R_{\min},R_{\max}]) $ with
\[
\mathfrak{P}\lambda_{i}=\mu_{i},\hspace{8pt}\mathfrak{P}\lambda_{t} 
=\bm{\mu}_{01}(t),\hspace{8pt}
\sfW_{\sfd_{\mathfrak{C}}} (\lambda_{i},\lambda_{t}) =
\HK(\mu_{i},\bm{\mu}_{01}(t)) \quad \text{ for }
i=0,1,2.
\]
\end{lemma}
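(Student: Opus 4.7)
The plan is to build the four cone lifts $\lambda_0,\lambda_1,\lambda_2,\lambda_t$ in three stages, combining the cone formulation of $\HK$ from Theorem \ref{thm:OTcone} with the uniform scaling bounds from Lemma \ref{unif.bounds.density}. First, I would apply Lemma \ref{unif.bounds.density} to the pair $(\mu_0,\mu_1)\in\widetilde\M_{\sfd_1,\sfd_2}(\YX)^2$ to obtain an optimal $\LET$ plan $\bfH_{01}$ with marginals $\eta_i=\sigma_i\mu_i$ satisfying $C_{\min}\le\sigma_i\le C_{\max}$ and with effective transport distances bounded by some $\mathfrak D<\pi/2$. Using the standard $\LET$-to-cone correspondence of \cite{LiMiSa18OETP}, optimal cone plans $\lambda_0,\lambda_1\in\calP_2(\mfC)$ with $\mathfrak P\lambda_i=\mu_i$ and $\sfW_{\sfd_\mfC}(\lambda_0,\lambda_1)=\HK(\mu_0,\mu_1)$ are obtained by pushing $\bfH_{01}$ (plus the appropriate part supported at the apex) through maps of the form $(x_0,x_1)\mapsto[x_i,\sqrt{\sigma_i(x_i)}]$, so that the radii lie in an interval $[R_{\min},R_{\max}]$ determined by $C_{\min},C_{\max}$, and hence only by $\sfd_1,\sfd_2$.

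Next I would define $\lambda_t$ as the constant-speed $\sfW_{\sfd_\mfC}$-geodesic at time $t$ between $\lambda_0$ and $\lambda_1$. By the correspondence between $\HK$-geodesics in $(\M(\YX),\HK)$ and $\sfW_{\sfd_\mfC}$-geodesics in $(\calP_2(\mfC),\sfW_{\sfd_\mfC})$ (see Theorem \ref{thm:OTcone} and \cite{LiMiSa18OETP}), one has $\mathfrak P\lambda_t=\bm\mu_{01}(t)$ and $\sfW_{\sfd_\mfC}(\lambda_i,\lambda_t)=\HK(\mu_i,\bm\mu_{01}(t))$ for $i=0,1$ automatically. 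To control the support of $\lambda_t$ I would use the explicit formula for cone geodesics: between $[x_0,r_0]$ and $[x_1,r_1]$ with $r_0,r_1\in[R_{\min},R_{\max}]$ and $\sfd_X(x_0,x_1)\le\mathfrak D<\pi/2$, the radius evolves as $r(s)^2=(1{-}s)^2r_0^2+s^2r_1^2+2s(1{-}s)r_0r_1\cos(\sfd_X(x_0,x_1))$, which stays inside a positive annulus $[R'_{\min},R'_{\max}]$ depending only on $R_{\min},R_{\max}$ and $\mathfrak D$.

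For the third lift I would apply Lemma \ref{unif.bounds.density} once more, now to the pair $(\bm\mu_{01}(t),\mu_2)$, which both lie in $\widetilde\M_{\sfd_1,\sfd_2}(\YX)$ by hypothesis. This provides an optimal $\LET$ plan between them whose scaling factors are again uniformly bounded, and I would use it to construct a cone lift $\lambda_2$ of $\mu_2$ such that $\sfW_{\sfd_\mfC}(\lambda_2,\lambda_t)=\HK(\mu_2,\bm\mu_{01}(t))$ and $\supp\lambda_2\subset\mfC[R''_{\min},R''_{\max}]$. Finally I would set $R_{\min}:=\min\{R'_{\min},R''_{\min}\}$ and $R_{\max}:=\max\{R'_{\max},R''_{\max}\}$ and restrict all four measures to this common annulus.

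The main obstacle is stage three: the cone lift of $\mu_2$ is not free to be any optimal lift, but must be optimally coupled to the already-fixed $\lambda_t$ from stage two. One therefore needs a compatibility between the $\LET$-to-cone construction and the previously chosen $\lambda_t$, i.e.\ that the optimal transport plan between $\lambda_t$ and some cone lift of $\mu_2$ still arises from a $\LET$ plan with uniformly bounded scaling factors. This is precisely the delicate point where the density bounds $\delta\le\rho\le1/\delta$ in $\M_\delta(\YX)$, propagated through the geodesic via Lemma \ref{MdeltatoMd1d2} into the wider class $\widetilde\M_{\sfd_1,\sfd_2}$, are used in an essential way, and it is the reason the statement is restricted to measures in $\widetilde\M_{\sfd_1,\sfd_2}$ rather than general $\M(\YX)$.
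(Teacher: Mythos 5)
This lemma is not proved in the present paper at all: it is recalled verbatim from \cite[Lem.\,4.11]{LasMie19GPCA}, so there is no in-paper proof to compare against. Judged on its own merits, your outline has the right ingredients (the cone formulation of Theorem \ref{thm:OTcone}, the uniform scaling bounds of Lemma \ref{unif.bounds.density}, and the explicit cone-geodesic radius formula), but it contains one ordering error and one genuine, unresolved gap.

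The ordering error is in stage two. You construct $\lambda_0,\lambda_1$ from an optimal $\LET$ plan for $(\mu_0,\mu_1)$ and then declare $\lambda_t$ to be the $\sfW_{\sfd_{\mfC}}$-geodesic interpolant, claiming $\mathfrak P\lambda_t=\bm{\mu}_{01}(t)$ ``automatically.'' Geodesics in $(\M(\YX),\HK)$ need not be unique, and the projection of the cone geodesic between your particular $\lambda_0,\lambda_1$ is only guaranteed to be \emph{some} geodesic from $\mu_0$ to $\mu_1$, not the prescribed one $\bm{\mu}_{01}$ appearing in the hypothesis. The correct order is the reverse: start from the given geodesic $\bm{\mu}_{01}$, lift \emph{it} to a cone geodesic $\bm{\lambda}_{01}$ (every $\HK$-geodesic arises as a projection of a $\sfW_{\sfd_{\mfC}}$-geodesic), and set $\lambda_i=\bm{\lambda}_{01}(i)$, $\lambda_t=\bm{\lambda}_{01}(t)$; the identities $\sfW_{\sfd_{\mfC}}(\lambda_i,\lambda_t)=|t-i|\,\HK(\mu_0,\mu_1)=\HK(\mu_i,\bm{\mu}_{01}(t))$ for $i=0,1$ then follow from the constant-speed property.

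The genuine gap is stage three, and you acknowledge it yourself without closing it. For the fixed $\lambda_t$ produced in stage two, Theorem \ref{thm:OTcone} only gives $\HK^2(\mu_2,\bm{\mu}_{01}(t))$ as a minimum over \emph{both} lifts; it does not by itself produce a lift $\lambda_2$ of $\mu_2$ achieving $\sfW_{\sfd_{\mfC}}(\lambda_2,\lambda_t)=\HK(\mu_2,\bm{\mu}_{01}(t))$ with the already-chosen $\lambda_t$. One needs the stronger statement (available in \cite{LiMiSa18OETP}) that for an arbitrary fixed lift of one marginal there always exists an optimally coupled lift of the other, together with an argument that the radii of this $\lambda_2$ are controlled: the optimal coupling is governed by the scaling factors $\sigma_i$ of an optimal $\LET$ plan for $(\mu_2,\bm{\mu}_{01}(t))$, which Lemma \ref{unif.bounds.density} confines to $[C_{\min},C_{\max}]$, and combining this with the radius bounds on $\lambda_t$ yields $\supp\lambda_2\subset\mfC[R_{\min},R_{\max}]$. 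As written, your proposal names this as ``the delicate point'' and asserts that the density bounds handle it, but supplies neither the existence of the compatible lift nor the radius estimate, so the proof is incomplete precisely at the step that makes the lemma nontrivial.
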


The observer's location does not necessarily have to be within $\M_{\delta}$,
as shown in both Lemma \ref{exist Lambdas} and the actual proof of
\cite[Thm.,4.8]{LasMie19GPCA}. It is sufficient for the observer to be situated
in $\widetilde{\M}_{\sfd_{1},\sfd_{2}}(\YX)$, which includes
$\GeoCov{\M}_{\delta}(\YX)$. Consequently, the same proof can be applied
without any changes.\bigskip
 
\noindent
\begin{proof}[Proof of Theorem \ref{thm:K.Semi}]
By Lemma \ref{MdeltatoMd1d2} there exists $0<\sfd_{1}<\frac{\pi}{2}$ and
$0<\sfd_{2}$ such that every geodesic $\mm_{01}$ connecting
$\mu_{0},\mu_{1}\in\M_{\delta}(X)$ satisfies
$\mm_{01}(t)\in \widetilde{\M}_{\sfd_{1},\sfd_{2}}(X)$ for all $ t\in [0,1]$.
We also have
$\mu_{2}\in \widetilde{\M}_{\sfd_{1},\sfd_{2}}(X) \supset
\GeoCov{\M_{\delta}}(X)$.
	
We would like to utilize the equivalent definitions of $K$-concavity provided
in \cite[Cor.\,2.24(iii)]{LasMie19GPCA} where a function
$f:[0,1] \rightarrow \mathbb{R}$ is $K$-concave if for every
$t_{1},t_{2} \in [0,1]$ with $t_1<t_2$ the mapping
$\tilde{f}^{[t_{1},t_{2}]}_{i}(t)=f_{i}\left( t_1 {+} t(t_2 {-}t_1) \right)$
satisfies
\begin{equation}
\tilde{f}^{[t_{1},t_{2}]}_{i}(t)+Kt (1{-}t) (t_2{-}t_1)^2  \geq  (1{-}t)
\tilde{f}^{[t_{1},t_{2}]}_{i}(0) +t\tilde{f}^{[t_{1},t_{2}]}_{i}(1)
 \text{ for all }t\in [0,1]. 
\end{equation}
In that direction, we take
$\tilde{\mu}_{0}=\mm_{01}(t_{1})$, $\tilde{\mu}_{1}=\mm_{01}(t_{2})$ for
$t_{1},t_{2}\in[0,1],$ and
$\tilde{\mm}_{01}(t)=\mm_{01}(t(t_{2}{-}t_{1})+t_{1}).$ By Lemma \ref{exist
  Lambdas}, there exists $R_{\min},R_{\max}$ that depend on
$\sfd_{1},\sfd_{2},$ and therefore on $\delta,$ such that for every
$\tilde{\mu}_{0},\tilde{\mu}_{1},\tilde{\mu}_{2}\in\widetilde{\M}_{\sfd_{1},\sfd_{2}}(X)$
and $0<t<1$ we can find measures
$\lambda_{0},\lambda_{1},\lambda_{2},\lambda_{t} \in \mathcal{P}_{2}(\mfC
[R_{\min},R_{\max}]) $ with
\begin{equation}
  \label{eq:La.mu}
  \mathfrak{P}\lambda_{i}=\tilde{\mu}_{i},\hspace{8pt}\mathfrak{P}\lambda_{t} 
  =\tilde{\mm}_{01}(t),\hspace{4pt} \text{and} \hspace{4pt} 
  \sfW_{\sfd_{\mfC}}(\lambda_{i},\lambda_{t}) 
  =\HK(\tilde{\mu}_{i},\tilde{\mm}_{01}(t)),\hspace{4pt} i=0,1,2,
\end{equation}
 see Theorem \ref{thm:OTcone}. 
Using the geodesic property of $\tilde{\mm}_{01}$ yields 
\begin{align*}
\sfW_{  \sfd_{\mfC}}(\lambda_{0},\lambda_{t}) + \sfW_{
    \sfd_{\mfC}}(\lambda_{1},\lambda_{t}) &=
  \HK(\mu_{0},\tilde{\mm}_{01}(t))+\HK(\mu_{1},\tilde{\mm}_{01}(t))\\
&= \HK(\tilde{\mu}_{0},\tilde{\mu}_{1})\!\leq\!
  \sfW_{ \sfd_{\mfC}}(\lambda_{0},\lambda_{1}).
\end{align*} 
Hence, it is straightforward to see that there exists a geodesic $\lala_{01}$
connecting $\lambda_{0},\lambda_{1},$ such that $\lala_{01}(t)=\lambda_{t}.$
Furthermore, by \cite[Thm.\,6]{Lisini2006a} there is a plan
$\bfLambda_{0\rightarrow 1}$ on the geodesics such that
$\Lambda_{ts}:=(e_{t},e_{s})_{\sharp}\bfLambda_{0\rightarrow1}$ is an optimal
plan between $\lala(t)$ and $\lala(s).$ Now, by using a gluing lemma, we can
find a plan $\bfLambda^{0 \to 1}_{2t}$ in
$\mathcal{P}((C[0,1];\mfC)\times\mfC),$ such that
$\Lambda_{01}=(e_{0},e_{1})_{\sharp}\left(\pi^{0\rightarrow
    1}_{\sharp}\bfLambda^{0 \to 1}_{2t}\right),$ and
$(e_{t}(\pi^{0\rightarrow 1})\times I)_{\sharp}\bfLambda^{0 \to 1}_{2t}$ is an
optimal plan for $ \sfW_{\sfd_{\mfC }}(\lambda_{2},\lala_{01}(t)).$ Finally by
applying the last part of Lemma \ref{unif.bounds.density}, we get the existence
of a $\mathfrak{D}<\frac{\pi}{2}$ such that $|x_{2}{-}x_{t}|<\mathfrak{D}$ for
$(e_{t}(\pi^{0\rightarrow 1})\times I)_{\sharp}\bfLambda^{0 \to 1}_{2t}$ almost
every $(z_{2},z_{t}),$ similarly $|x_{0}{-}x_{1}|<\mathfrak{D}$ for
$\Lambda_{01}$ almost every $[z_{0},z_{1}].$ Therefore, for
$\bfLambda^{0 \to 1}_{2t}$ almost every $(z_{2}, \zz(\cdot,z_{0},z_{1})),$
where $\zz(\cdot,z_{0},z_{1})$ is a geodesic connecting $z_{0},z_{1},$ we have
$x_{0},x_{1},x_{2}, \bar{\xx}(t,z_{0},z_{1})\in
B\left(\bar{\xx}(t,z_{0},z_{1}),d\right).$ By \cite[Prop.\,2.27]{LasMie19GPCA}
we get a $K'$ such that 
\begin{equation}
\label{eq:C.Kconcave}
  \sfd^{2}_{\mfC }(z_{2},\zz(t,z_{0},z_{1})) +
   K't(1{-}t)\sfd^{2}_{\mfC }(z_{0},z_{1})\geq
   (1{-}t) \sfd^{2}_{\mfC }(z_{2},z_{0})+ t\, \sfd^{2}_{\mfC }(z_{2},z_{1}),
\end{equation}
for $\bfLambda^{0 \to 1}_{2t}$ almost every $(z_{2}, \zz(\cdot,z_{0},z_{1})).$
By integrating with respect to $\bfLambda^{0 \to 1}_{2t},$ we find
\begin{equation}
    \sfW_{\sfd_{\mfC }}^{ 2}(\lambda_{2},\lala_{01}(t)) +
    K't (1{-}t) \sfW_{\sfd_{\mfC}}^{ 2}
    (\lambda_{0},\lambda_{1})\geq   
   (1{-}t) \sfW_{\sfd_{\mfC}}^{ 2}(\lambda_{2},\lambda_{0})+ 
   t \, \sfW_{\sfd_{\mfC}}^{ 2} (\lambda_{2},\lambda_{1}).
\end{equation}
Using \eqref{eq:La.mu} we find the desired semiconcavity, and Theorem
\ref{thm:K.Semi} is proved. 
\end{proof}

\subsection{Geodesic semiconvexity of functionals on $\HK$ and $\SHK$}
\label{su:HK.EVI.flow}

In \cite{LiMiSa23FPGG} the question of geodesic $\lambda$-convexity of
functionals $\E$ with reference measure $\calL^d$ on a $d$-dimensional
domain are discussed in detail. It is shown that $\E$ defined as in
\eqref{EntropyFunctional} in terms 
of a lsc and convex density functions $E$ with $E(0)=0$ is
$\lambda$-convex on $(\calM(X),\HK)$ if and only if the auxiliary function
\[
N_{E,\lambda}:(0,\infty)^2 \to \R \cup \{\infty\}; \ (\rho,\gamma) \,\mapsto \, 
\left( \tfrac{\ds\rho}{\ds\gamma}\right)^d E \left( \tfrac{\ds\gamma^{2+d}}{\ds
  \rho^d} \right) - \frac\lambda2 \gamma^2 
\] 
satisfies the following two conditions:
\begin{equation}
  \label{eq:NE.conds}
  \begin{aligned}
  & N_{E,\lambda} : (0,\infty)^2 \to \R \cup\{\infty\} \ \text{ is convex and }
\\
 & \rho \mapsto  (d{-}1)\,N_{E,\lambda} (\rho,\gamma) \ \text{ is non-increasing.} 
  \end{aligned}
\end{equation}
It is shown that the density functions $E$ of the form 
\[
E(c) = \alpha_0 c +   \alpha_1 c^{p_1} + \cdots +  \alpha_m c^{p_m} 
\]
lead to geodesically $2\alpha_0$-convex $\E$ if $\alpha_0\in \R$ and
$\alpha_i\geq 0$ and $p_i >1$ for $i=1,\ldots,m$. Moreover 
in dimensions $d\in \{1,2\}$ the density function $E(c) = - 
\beta c^q$  with $\beta \geq 0$ and $q \in [d/(d{+}2),1/2]$ lead to
geodesically convex functionals $\E$.

So far there doesn't seem to be a theory for semiconvexity on
$\calP(\YX),\SHK)$ which can be used to provide examples. In Appendix
\ref{se:TransferLambdaCvx} we establish the following nontrivial class of examples.

\begin{proposition}\label{pr:q.leq.1} Consider dimension $d\in \{1,2\}$ and a
bounded convex domain $\YX\subset \R^d$ that is the closure of an open
set. Then, the functional $\E_q$ defined via 
\[
\E_q(\mu)= - \int_X \rho^{\,q} \:\d x  \quad \text{for } \mu=\rho \,\d x +
\mu^\perp
\]
is geodesically convex on $(\calP(\YX),\SHK)$ if $q \in [d/(d{+}2), 1/2]$. 
\end{proposition}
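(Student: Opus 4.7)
The strategy is to transfer the already known $\HK$-geodesic convexity of $\E_q$ from \cite{LiMiSa22?GLCH} to a $\SHK$-geodesic convexity on $\calP(X)$, by exploiting three ingredients: (i) the geodesic convexity of $\E_q$ on $(\calM(X),\HK)$ for $q \in [d/(d{+}2),1/2]$ in dimensions $d\in\{1,2\}$, quoted in Example \ref{example2} as a consequence of \cite{LiMiSa22?GLCH}; (ii) the description of $\SHK$-geodesics through $\HK$-geodesics from \cite{LasMie19GPCA} that is recalled in Section \ref{su:SpherHK}; and (iii) the scaling identity $\E_q(c\mu)= c^{\,q}\E_q(\mu)$ for $c>0$ and $\mu\ll \calL^d$.

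First I would fix $\nu_0,\nu_1\in \calP(X)$ together with an arbitrary $\SHK$-geodesic $s\mapsto \nu(s)$ joining them. By the correspondence established in \cite{LasMie19GPCA}, $\nu$ can be obtained from a suitably chosen $\HK$-geodesic $t\mapsto \mu(t)\in \calM(X)$ with $\mu(0)=\nu_0$, $\mu(1)=\nu_1$ via the normalization
\[
 \nu(s) \;=\; \frac{\mu(t(s))}{m(t(s))}, \qquad m(t):=\mu(t)(X),
\]
where the reparametrization $s\mapsto t(s)$ encodes the identity $\SHK = 2\arcsin(\HK/2)$. The mass function $m(t)$ can be read off from the cone picture of Theorem \ref{thm:OTcone}: writing $\mu(t)=\mathfrak{P}\lambda_t$ for a Wasserstein geodesic $\lambda_t$ on $(\mfC,\sfd_\mfC)$, one has $m(t)=\int r^{2}\,\d\lambda_t$, and its explicit trigonometric form follows from $\sfd_\mfC^2([x_0,r_0],[x_1,r_1]) = r_0^2+r_1^2-2r_0r_1\cos_\pi(\sfd_X(x_0,x_1))$. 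In particular $m(0)=m(1)=1$.

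Combining this with the homogeneity of $\E_q$ gives
\[
  \E_q(\nu(s)) \;=\; m(t(s))^{-q}\,\E_q(\mu(t(s))).
\]
By \cite{LiMiSa22?GLCH} the map $t\mapsto \E_q(\mu(t))$ is convex along the $\HK$-geodesic $\mu$, and since $\rho\mapsto-\rho^{\,q}$ is nonpositive for $q\in(0,1)$ we also have $\E_q(\mu(t))\leq 0$. The proof is then reduced to showing that the composite $s\mapsto m(t(s))^{-q}\,\E_q(\mu(t(s)))$ inherits convexity in $s$. To do so I would work on the level of second derivatives at an arbitrary point and couple three elementary estimates: the $\HK$-convexity inequality for $\ddot{\E_q}(\mu(t))$, the explicit formula for $\ddot m(t)$ coming from the cone distance, and the second derivative of the reparametrization $s\mapsto t(s)$ induced by $\SHK=2\arcsin(\HK/2)$.

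The main obstacle is precisely this last composition step: multiplying a convex nonpositive function by a positive factor need not yield a convex function, so one must use the detailed interaction between $m(t)$, $t(s)$, and the sign of $\E_q$. I expect the restriction $q\geq d/(d{+}2)$ to enter exactly here, providing the key positivity of the combined second derivative; the restriction $q\leq 1/2$ is inherited from the $\HK$-convexity input via \cite{LiMiSa22?GLCH}. In the appendix indicated in the statement this transfer argument is presumably formulated once and for all as a general principle relating $\lambda$-convex homogeneous functionals on $(\calM(X),\HK)$ with their restrictions to $(\calP(X),\SHK)$, which also explains why it yields Proposition \ref{pr:q.leq.1} as an immediate corollary but does not give a full characterization (cf.\ Open Question \ref{OQ.1}).
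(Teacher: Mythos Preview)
Your overall strategy is exactly that of the paper: use the cone description of $(\calM(X),\HK)$ over $(\calP(X),\SHK)$, the homogeneity of $\E_q$, and the known $\HK$-convexity from \cite{LiMiSa22?GLCH}. The paper packages the transfer as a general lemma (Proposition~\ref{pr:Transfer}): if $\sfF:\calC\to[-\infty,0]$ is geodesically convex on the cone and $p$-homogeneous with $p\geq 1/2$, then its restriction to the base $(\calX,\sfd)$ is geodesically convex. The proof does \emph{not} go through second derivatives. Instead, writing the explicit reparametrization
\[
\beta_\delta(t)=\frac{\sin(t\delta)}{\sin(t\delta)+\sin((1{-}t)\delta)},\qquad r_\delta(t)=\frac{\sin\delta}{\sin(t\delta)+\sin((1{-}t)\delta)},
\]
it reduces everything to the two elementary inequalities $\beta_\delta(t)/r_\delta(t)^p\geq t$ and $(1{-}\beta_\delta(t))/r_\delta(t)^p\geq 1{-}t$, which hold precisely when $p\geq 1/2$. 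Combined with $\E_q\leq 0$ this immediately yields convexity on $(\calP(X),\SHK)$. This is cleaner than tracking $\ddot m$, $\ddot t$, and the cross terms you propose.

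One genuine correction: you have the origin of the constraints reversed. \emph{Both} the lower bound $q\geq d/(d{+}2)$ and the upper bound $q\leq 1/2$ come from the $\HK$-convexity input of \cite{LiMiSa22?GLCH}; neither is produced by the transfer step. The transfer itself only requires $p=2q\geq 1/2$, i.e.\ $q\geq 1/4$, which is automatically implied by $q\geq d/(d{+}2)\geq 1/3$. So the ``key positivity'' you expect to extract from $q\geq d/(d{+}2)$ in the composition estimate will not materialize; what you actually need there is the weaker threshold $q\geq 1/4$.
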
 

With this result we are sure that the following main existence result for EVI
flows on $(\calP(\YX),\E,\SHK)$ provides at least the solutions to a small, but
nontrivial family of nonlinear partial differential equations.  Following the
discussion in Section \ref{se:EVI.vs.PDE}, the partial differential equations
associated with such functionals. For the case $q=1/2$ we explicitly obtain the
nonlinear PDE
\[
\dot \rho = \frac12 \Delta \sqrt\rho + 2 \Big(\sqrt\rho - \rho \int_X \sqrt\rho
\,\d x  \Big) \text{ in } X, \quad \nabla \rho\cdot \nu=0 \text{ on }\pl X
\]
which defines a contraction semiflow (EVI flow) in the space $\calP(X),\SHK)$.

\subsection{The Main Result}
\label{su:MainResult}

In this section we collect the results from the previous sections and 
provide the proof of our main result, which we repeat here for convenience.

\begin{theorem}\label{mainmain}
Let $X \subset \R^d$ be a compact, convex set with nonempty
interior. Furthermore, let $\E$ be a functional defined as in
\eqref{EntropyFunctional} that satisfies Assumption \ref{BasicAssumpt}. 

Then, for all $\mu_0=\rho_0\calL^d$ with
$0<\ul\rho_0 \leq \rho_0(x) \leq \ol\rho_0< \infty$ a.e.\ in $\YX$, the
geodesically interpolated solutions of the MM schemes corresponding to the
gradient system $(\calM(X),\E,\HK)$ ($(\calP(X),\E,\SHK)$), as in
$\eqref{scheme}_{\HK}$ ( $\eqref{scheme}_{\SHK}$), converge to a complete
solution $\bm{\mu}:[0,\infty)\to \calP(\YX)$ of EVI$_\lambda$.  Moreover, for
all $\mu_0 \in \ol{\dom(\E)}^{\HK} \subset \calM(X)$ (
$\mu_{0}\in \ol{\dom(\E)}^{\SHK}\subset \calP(X)$) there exists a unique EVI
solution, emanating from $\mu_{0}$.
\end{theorem} 
\begin{proof}
We start from $\mu_0=\rho_0\calL^d$ with
$0<\ul\rho_0 \leq \rho_0(x) \leq \ol\rho_0< \infty.$ By Propositions
\ref{discrete maximal principle SHK} (for $\SHK$) and 
\ref{pr:Iteration.HK} (for $\HK$) we know that for every $T$ there
exists a $\tau_{0}>0$, such that for 
all $\tau\leq\tau_{0}$ and $n$ with $n\tau<T$, the solutions $\mu_{n}$ of the
MM scheme satisfy $\mu_{n}=\rho_n\calL^d$ where
$0<\delta\leq \rho_n(x) \leq 1/\delta< \infty$. But this means that 
$\mu_{n}\in \overline{\M}_{\delta}(\YX)$ for all $n$ with $n\tau<T$. Therefore, by
taking $A_{\kappa}= \overline{\M}_{\delta}(\YX)$ and using Theorem
\ref{thm:K.Semi} we see that all assumptions of Theorem \ref{th:MainExistTheo}
are satisfied. We note that LAC was proved in \cite[Theorem 4.1]{LasMie19GPCA}. Therefore, the convergence of the MM scheme for density restricted
initial data $\mu_0$ follows. 

For general initial data $\mu_0$ in the closure $\ol{\dom(\E)}^{\SHK}
\subset \calP(X)$ or $\ol{\dom(\E)}^{\HK}\subset \calM(X)$ of the domain of the
functional $\E$  we can choose approximations $\mu_0^{m}= \rho_0^m\calL^d$
satisfying $\rho_0^m\in [1/m,m]$ a.e.\ in $X$. The associated EVI solutions
$\mu_m$ are complete and converge to the desired, complete EVI solution emanating from
$\mu_0$ by applying \eqref{eq:Lipschitz}. 
\end{proof}

\appendix
\section{Transfer of $\lambda$-convexity between $\HK$ and $\SHK$}
\label{se:TransferLambdaCvx}

We rely on the interpretation of $(\calM(\YX),\HK)$ as a cone of over
$(\calP(\YX), \SHK)$ that was developed in \cite{LasMie19GPCA}. But first we
consider a general geodesic space $(\calX,\sfd) $ and the associated cone
$(\calC,D)$, which take the places of
$(\calP(\YX), \SHK)$ and $\calM(\YX),\HK)$, respectively. 

We present a general result that demonstrates, under appropriate conditions, the geodesic convexity of a $p$-homogeneous functional $\sfF$ on $(\calC,D)$. Specifically, we show that when $\sfF$ is restricted to $(\calX,\sfd)$, it remains geodesically convex.

\begin{proposition}[Transfer for negative, homogeneous functionals]
\label{pr:Transfer}
Assume that $\sfF:\calC\to [{-}\infty,0]$ is geodesically convex on
$(\calC,D)$, and that it is $p$-homogeneous for some $p\geq 1/2$, i.e.\
\[
\sfF([x,r]) = r^p \sfF([x,1]) \text{ for all } [x,r]\in \calC .
\]
Moreover, assume $\sfd(x,y)<\pi$ for all $x,y\in \calX$. 

Then, $ \E(x):=\sfF([x,1])\in [{-}\infty,0]$ is geodesically convex on $(\calX,\sfd)$.
\end{proposition}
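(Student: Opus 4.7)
The plan is to reduce convexity of $\E$ on $(\calX,\sfd)$ to the assumed convexity of $\sfF$ on the cone $(\calC,D)$ by lifting an arbitrary base geodesic to a cone geodesic with well-chosen radii. Given $x_0,x_1\in\calX$ with $\alpha:=\sfd(x_0,x_1)<\pi$ (the case $\alpha=0$ is trivial) and a constant-speed $\calX$-geodesic $\xx:[0,1]\to\calX$ connecting them, I fix $t\in(0,1)$ and choose
\[
a_t:=\frac{\sin((1-t)\alpha)}{(1-t)\sin\alpha},\qquad b_t:=\frac{\sin(t\alpha)}{t\sin\alpha}.
\]
Using the planar picture associated with the law of cosines $D^2=r_0^2+r_1^2-2r_0r_1\cos\alpha$, the cone geodesic from $[x_0,a_t]$ to $[x_1,b_t]$ corresponds to the straight segment from $(a_t,0)$ to $(b_t\cos\alpha,b_t\sin\alpha)$ in $\R^2$; a short trigonometric identity (essentially $\sin((1-t)\alpha)+\sin(t\alpha)\cos\alpha=\sin\alpha\cos(t\alpha)$) shows that at parameter $s=t$ this segment hits the point $(\cos(t\alpha),\sin(t\alpha))$, which represents $[\xx(t),1]$ in the cone. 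Moreover, the concavity of $\sin$ on $[0,\pi]$ gives the chord estimate $\sin u\geq (u/\alpha)\sin\alpha$ for $u\in[0,\alpha]$, which applied to $u=(1-t)\alpha$ and $u=t\alpha$ yields $a_t,b_t\geq 1$.

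With the lift in place, geodesic convexity of $\sfF$ on $(\calC,D)$ along this cone geodesic, combined with $p$-homogeneity, gives
\[
\E(\xx(t))=\sfF([\xx(t),1])\leq (1-t)\,\sfF([x_0,a_t])+t\,\sfF([x_1,b_t])=(1-t)\,a_t^p\,\E(x_0)+t\,b_t^p\,\E(x_1).
\]
The sign hypothesis $\sfF\leq 0$ now becomes decisive: since $a_t^p,b_t^p\geq 1$ and $\E(x_i)\leq 0$, multiplying $\E(x_i)$ by $a_t^p$ or $b_t^p$ can only make it more negative, whence
\[
(1-t)\,a_t^p\,\E(x_0)+t\,b_t^p\,\E(x_1)\leq (1-t)\,\E(x_0)+t\,\E(x_1),
\]
which is exactly the desired geodesic convexity of $\E$ on $(\calX,\sfd)$.

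The crux I expect to be the main obstacle is the geometric step: identifying $(a_t,b_t)$ so that the cone geodesic from $[x_0,a_t]$ to $[x_1,b_t]$ passes through $[\xx(t),1]$ precisely at parameter $t$, and then verifying $a_t,b_t\geq 1$ via the chord inequality for $\sin$. After that, the $p$-homogeneity combined with $\sfF\leq 0$ does the rest almost mechanically. A small point of care is the interaction with non-unique base geodesics: the cone geodesic we construct projects (after reparameterization) to the chosen $\xx$, and we use convexity of $\sfF$ along that specific cone geodesic, so the construction is compatible with the ``existence of a geodesic'' form of Definition~\ref{de:KCcv.lambdaCvx}. Cases with $\E(x_i)=-\infty$ are handled by the usual conventions $0\cdot(-\infty)=0$ and $a\cdot(-\infty)=-\infty$ for $a>0$, and the endpoints $t\in\{0,1\}$ are immediate.
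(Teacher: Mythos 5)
Your proof is correct, but it takes a genuinely different route from the paper's. The paper lifts the two base points with \emph{unit} radii, works along the cone geodesic from $[x_0,1]$ to $[x_1,1]$, and then has to undo the mismatch between the cone parameter and the constant-speed base parameter; this produces the coefficients $\frac{1-\beta_\delta(t)}{r_\delta(t)^p}$ and $\frac{\beta_\delta(t)}{r_\delta(t)^p}$, and the whole difficulty is concentrated in proving that these dominate $1-t$ and $t$ (the inequality $Q_p(t,\delta)\ge 1$), which is delicate and is exactly where the hypothesis $p\ge 1/2$ is used. You instead choose $t$-dependent endpoint radii $a_t,b_t$ so that the lifted cone geodesic passes through $[\xx(t),1]$ at parameter exactly $t$ (your trigonometric identity checks out: $(1-t)a_t+tb_t\cos\alpha=\cos(t\alpha)$ and $tb_t\sin\alpha=\sin(t\alpha)$, and the developed segment is indeed a constant-speed geodesic of $(\calC,D)$ since the sector of angle $\alpha<\pi$ is convex and the sub-cone over $\xx$ is isometric to it). After that, only the elementary bounds $a_t,b_t\ge 1$ from concavity of $\sin$ are needed. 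What your approach buys is considerable: it replaces the paper's analysis of $Q_p$ by a one-line chord inequality, and it never invokes $p\ge 1/2$, so it proves the proposition for every $p>0$. This does not contradict the paper: the paper's closing remark only shows that \emph{its own} estimate $Q_p\ge1$ fails for $p<1/2$ near $\delta=\pi$, not that the conclusion fails, and your choice of chord through $[\xx(t),1]$ simply avoids that obstruction.

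One small caveat worth recording: because your lifted geodesic has $t$-dependent endpoints $[x_0,a_t]$, $[x_1,b_t]$, your argument needs convexity of $\sfF$ along each of these specific lifts, i.e.\ the \emph{strong} form of geodesic convexity in Definition~\ref{de:KCcv.lambdaCvx}. Under the weak (``there exists a geodesic'') reading, the good cone geodesic between $[x_0,a_t]$ and $[x_1,b_t]$ could vary with $t$ and project to different base geodesics, so you would not get convexity of $\E$ along a single base geodesic. The paper's construction, having fixed endpoints $[x_0,1]$, $[x_1,1]$, transfers weak to weak. In the intended applications the convexity holds along all geodesics, so this is immaterial, but you should state which form you assume.
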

\begin{proof} We use the fact that all geodesics $x:[0,1]\to \calX$
  connecting $x$ and $x_1$ are given by the geodesics $z:[0,1]
  \mapsto [\ol x(t),r(t)] \in \calC$ by a simple reparametrization,
  see \cite[Thm.\,2.7]{LasMie19GPCA}, namely, setting
  $\delta=d(x,x_1)\in {]0,\pi[}$ we have  
\[
x(t)=\ol x(\beta_\delta(t)) \text{ with } \beta_\delta(t):= \frac{\sin(t
  \delta)}{\sin(t\delta) + \sin((1{-}t)\delta)}, 
\]
where $\beta_\delta(0)=0$ and $\beta_\delta(1)=1$.
Moreover, $r(t) =1- t(1{-}t) D([x,1],[x_1,1])^2 $ with $D([x,1],[x_1,1])
=\left(2(1{-}\cos \delta)\right)^{1/2} $  can be rewritten
via
\[
r(\beta_\delta(t))=r_\delta(t): = \frac{\sin(\delta)} 
{\sin(t\delta) + \sin((1{-}t)\delta)} \in [1/2,1], 
\]
where $r(0)=r(1)=1$.
With this we obtain 
\begin{align*}
\E(x(t))&=\sfF([x(t),1])&&(\text{definition of } \E)\\
 &= \sfF([\ol x(\beta_\delta(t)),1]) &&\text{(reparametrization)}\\
 &= \frac1{r_\delta(t)^p}  \sfF([\ol x(\beta_\delta(t)),r_\delta(t)])
   && (\text{$p$-homogeneity of } \sfF)\\
&  \leq  \frac1{r_\delta(t)^p}\:\left( \left(1{-}\beta_\delta(t)\right)\sfF([\ol x(0),r(0)]) 
     + \beta_\delta(t) \sfF([\ol x(1),r(1)]) \right)
   && (\text{geodesic cvx of $\sfF$ on }\calC)\\
&  =  \frac{1{-}\beta_\delta(t)}{r_\delta(t)^p}\,\E(x) 
     + \frac{\beta_\delta(t)}{r_\delta(t)^p}\,\E(x_1) 
   && (\text{definition of } \E)
\end{align*}
Because of $\E(x_j)\leq 0$ it suffices to show the two estimates 
\begin{equation}
  \label{eq:TwoEstim}
  \frac{1{-}\beta_\delta(t)}{r_\delta(t)^p} \geq  1-t \quad \text{and} \quad 
 \frac{\beta_\delta(t)}{r_\delta(t)^p} \geq  t \qquad \text{for all
 }t\in [0,1] \text{ and } \delta\in [0,\pi].
\end{equation}
For the
second estimate and the case $p\geq 1$ we use the explicit form and obtain  
\[
\frac{\beta_\delta(t)}{r_\delta(t)^p}= \frac1{r_\delta(t)^{p-1}}
\:\frac{\sin(t\delta)}{\sin\delta} \geq  t \quad \text{for all }t\in [0,1],
\]
where we used $r_\beta(t)\leq 1$ and that $t\mapsto \sin(t\delta)$ is
concave on $[0,1]$ because of $\delta \in {]0,\pi[}$. Thus, the
result certainly holds for $p\geq 1$. 

However, it also holds for $p\in [1/2,1]$ by the following arguments.  Define
the function
\[
Q_p(t,\delta)=\frac{\sin(t\delta)}{t \,(\sin\delta)^p}
\left(\sin(t\delta)+\sin((1{-}t)\delta)\right)^{p-1}.   
\]
It suffices to show $Q_p(t,\delta)\geq 1$ for $t\in {]0,1[}$ and $\delta \in
{]0,\pi[}$. Clearly, we have $Q_p(1,\delta)=1$ and we find 
\[
\pl_t Q_p(1,\delta)=-1 +  p \frac{\delta \cos\delta}{\sin \delta} +
(1{-}p)\,\frac\delta{\sin\delta} .
\]
For $p\in [1/2,1]$ one can show that $\pl_t
Q_p(t,\delta) \leq 0$ which implies $Q_p(t,\delta) \geq Q_p(1,\delta)=1$ which
is the desired second estimate in \eqref{eq:TwoEstim}. 

To see why $p\geq 1/2$ is necessary, we use $\lim_{\delta\to \pi^-}\left( \sin
\delta\: \pl_t 
Q_p(1,\delta)\right) = \pi(1{-}2p)$. Thus, for $p<1/2$ we have $\pl_t
Q_p(t,\delta)>0$ for $\delta \approx \pi$, which implies $Q_p(t,\delta)<1$.

The first estimate in \eqref{eq:TwoEstim} follows similarly, namely by
changing $t$ to $1{-}t$. This  proves the result. 
\end{proof}

We now consider $(\calM(\YX),\HK)$ as the cone over
$(\calP(\YX),\SHK)$ for some convex and compact $\YX\subset\R^d$.  We first
observe that \cite[Thm.\,3.4]{LasMie19GPCA} guarantees $\SHK(\nu_0,\nu_1) \leq
\pi/2$  that the condition $\sfd(x,y) <\pi$ is automatically satisfied.    

From \cite{LiMiSa23FPGG} we know  that the functionals 
\[
\E_q(\mu)=\int_\YX \varrho(x)^q \:\rmd x \quad \text{ for } \mu =\varrho
\,\rmd x
\]
are geodesically 0-convex on $(\calM(\YX),\HK)$ whenever $q > 1$. Moreover, in
the sense of cones we have $\mu= r^2\varrho \Ld $ giving $p$-homogeneity
with $p =2q$, namely
\[
\E_q(\mu)=\E_q(r^2 \varrho \Ld )=\sfF_q([\varrho  \Ld ,r]) 
= r^{2q} \sfF_q([\varrho \Ld ,1])= r^{2q} \E_q(\varrho \Ld ) . 
\]
However, the above result is not applicable because of $\sfF_q(\mu)\geq 0$. 

Note also that the special case $q=1$ leads to the mass functional
$\E_\rmM(\mu)=\E_1(\mu)=\int_\YX \mu(\d x)=\mu(\YX)$ which is 
geodesically 2-convex for $\HK$ (as well as geodesically 2-concave). 
However, its spherical restriction is
obviously constant, hence it is geodesically $0$-convex and $0$-concave. 
This means that we have a drop in the convexity, namely 
\[
0= \Lambda_{\SHK} \lneqq \Lambda_\HK = 2.
\]

However, the above result can be applied in the case of functionals of the form
\begin{align}
  \label{eq:E-q.leq.1} 
\E_q(\mu)= - \int_\YX \varrho(x)^q\: \d x \quad \text{for } q\in (0,1) \text{
  and } \mu= \varrho  \Ld  + \mu^\perp ,
\end{align}
where $ \mu^\perp $ is singular with respect to $  \Ld $. 
This leads to the \medskip

\noindent
\begin{proof}[Proof of Proposition \ref{pr:q.leq.1}] 
For $\nu_0,\nu_1 \in \calP(X)$ we first observe $\HK^2(\nu_0,\nu_1) \leq
\nu_0(X)+\nu_1(X)$ which implies $\SHK(\nu_0,\nu_1)= 2
\arcsin(\frac12\HK(\nu_0,\nu_1)) \leq 2 \arcsin(\frac12 \sqrt2) = \pi/2 < \pi$. 
 
It is shown in \cite{LiMiSa23FPGG}  that $\E_q$ is
geodesically 0-convex under the following conditions:
\[
q \in [1/3, 1/2] \text{ and } d=1 \qquad \text{or} \qquad 
q=1/2 \text{ and } d=2.
\]
Moreover, we obviously have $\E_q(\mu)\leq 0$ and $\E_q(r^2\varrho \Ld )=\sfF_q([\varrho \Ld ,r])
= r^{2q} \sfF_q([\varrho \Ld ,1])= r^{2q} \E_q(\varrho \Ld )$. Using $q\geq 1/3$ we have
$p$-homogeneity with $p=2q\geq 2/3$. Hence, all assumptions of Proposition
\ref{pr:Transfer} are satisfied, and the geodesic convexity of $\E_q$
restricted to $(\calP(\YX),\SHK)$ follows. 
\end{proof}

\paragraph*{Acknowledgments.} The authors are grateful to Giuseppe Savar\'e for
fruitful and stimulating discussions and for sharing the preprint
\cite{Sava11?GFDS}.  V. Laschos was supported by DFG, through the project EXC-2046/1 within Germany´s Excellence Strategy – The Berlin Mathematics
Research Center MATH+ (project ID: 390685689). A. Mielke was
partially supported by DFG through the project Mie\,459/9-1 within the Priority
Program SPP\,2256 \emph{Variational Methods for Predicting Complex Phenomena in 
Engineering Structures and Materials} (project ID: 441470105).

\footnotesize


\newcommand{\etalchar}[1]{$^{#1}$}
\def\cprime{$'$}
\providecommand{\bysame}{\leavevmode\hbox to3em{\hrulefill}\thinspace}

\end{document}